\newtheorem{theorem}{Theorem}
\newtheorem{definition}[theorem]{Definition}
\newtheorem{corollary}[theorem]{Corollary}
\newtheorem{proposition}[theorem]{Proposition}
\theoremstyle{definition}
\newtheorem{remark}[theorem]{Remark}
\def \mb{\mathbb}
\def \bf{\mathbf}
\def \Z{\mb Z}                  
\def \R{\mb R}                 
\def \a{\alpha}         
\def \b{\beta}           
\def \D{\Delta}         
\def \vp{\varphi}       
\def \th{\theta}       
\newcommand {\diag} {\text{diag}}
\def \S{\mb S}        
\def \H{\mb H}        
\def\v{{\bf v}}
\def\u{{\bf u}}
\def\c{{\bf c}}
\newcommand {\q} {\mathbf{q}}
\title{Compactness and index of  ordinary central configurations  for  the curved $N$-body problem
}
\begin{document}
	\maketitle
	\markboth{Shuqiang Zhu}{  Compactness and index of OCC  for  the curved $n$-body problem}
	\vspace{-0.5cm}
	\author       
	\bigskip
	\begin{center}
		{\emph{In memoriam  of Florin Diacu}}\\
		{Shuqiang Zhu}\\
{\footnotesize 
	
	School of Economic and Mathematics,  Southwestern University of Finance and Economics, \\
	
	Chengdu 611130, China\\
	
	zhusq@swufe.edu.cn 
	
}

\end{center}

	\begin{abstract}	For the curved $n$-body problem, we show that  the set of ordinary central configurations is away from most singular configurations in $\H^3$, and away from a subset of  singular configurations in $\S^3$. We also show that 
			each of the  $n!/2$ geodesic  ordinary central configurations  for  $n$ masses has Morse index $n-2$. 
Then we get a direct corollary that there are at least $\frac{(3n-4)(n-1)!}{2}$	ordinary central configurations  for given $n$ masses if all  ordinary central configurations of these masses are non-degenerate.  \end{abstract}

    \vspace{2mm}
    
    \textbf{Key Words:}  curved $n$-body problem;  ordinary central configurations; geodesic configurations;  Morse index;  compactness;   relative equilibrium; hyperbolic relative  equilibrium.
    \vspace{8mm}
    
    \section{introduction}

 The curved $n$-body problem studies the motion of particles interacting under the cotangent potential  in 3-dimensional sphere and 3-dimensional hyperbolic sphere.  It is a natural extension of the   Newtonian $n$-body problem in $\R^3$.  It roots in the research of Bolyai and Lobachevsky. For history and  recent advances,  one can refer to  Arnold et al. \cite{AKN06},  Borisov et al. \cite{BMK04} and Diacu \cite{Dia13-1}.  There are many researches in this area over the past two decades on the Kepler problem, two-body problem,  relative  equilibria, stability of periodic orbits,  etc.  
       
 The curved $n$-body problem is a Lagrangian mechanical  system. 
 A solution of the Euler-Lagrange's equation in the form of $A(t)\q$ is called a  relative  equilibrium if  $A(t)$   is a 1-parameter subgroup  of the symmetry group. 
  The topic of   relative  equilibria has received much attention recently (see  \cite{BGMM18,  Dia13-1,DPV12,DS14, Kil98, MS17, PS19, Tib14, ZZ16}  among others).  
  Unlike that of the   Newtonian $n$-body problem in $\R^3$,  the set of   relative  equilibria is divided into five classes, see Section \ref{sec:cnbpre}.  
    Diacu, Stoica and Zhu  introduce  a unified criterion for  relative  equilibria of the  curved $n$-body problem. The configurations of  relative  equilibrium are characterized as critical points of a single function \cite{DSZ18}. The criterion simplifies  the job of  finding   relative  equilibria \cite{DZ20}.  The configurations  are called central configurations and the set central configurations is divided into the ordinary ones and the special ones.    A counting problem of  ordinary central configurations is also proposed, see  Section 2.4.

In the   Newtonian $n$-body problem, the celebrated  problem of the finiteness of    configurations of relative equilibrium is still unsolved for $n>5$ up to now  \cite{AK12, Sma70-3, Win47}. On the other hand,  the collinear case is clear.  For any $n$ masses, there are exactly  $n!/2$ collinear  configurations of relative equilibrium  (Moulton \cite{Mou10}) and  their Morse index is $n-2$ \cite{Pac87}.  In general the set of normalized configurations of relative equilibrium  is known to be compact (Shub \cite{Shu70}) in the configuration space. 
Moreover, Palmore \cite{Pal73} proved that there are at least $\frac{(3n-4)(n-1)!}{2}$
 configurations of relative equilibrium  for given $n$ masses if all  configurations of relative equilibrium  of these masses are non-degenerate.

The purpose of  this paper is to  extend the results mentioned  above to the curved $n$-body problem. More precisely,  we show that  the set of ordinary central configurations is away from most singular configurations in $\H^3$, and away from a subset of  singular configurations in $\S^3$. We show that  there are $n!/2$ ``collinear''  ordinary central configurations and their Morse index is $n-2$ in $\H^3$, which also holds  in $\S^3$ provided that some conditions are satisfied. 
Furthermore,  Palmore's estimation 
also holds.

   The paper is organized as follows. In Section \ref{sec:cnbp}, we   review  the $n$-body problem in the three manifolds, $\R^3$, $\S^3$ and $\H^3$,  the criterion for  relative  equilibria  and the counting of ordinary central configurations.  
  Then we state our main results.  In Section \ref{sec:com}, we prove the results on 
   compactness of the set of   ordinary central configurations.  In Section \ref{sec:ind}, we prove the results on  
   the number and Morse index of the geodesic  ordinary central configurations.  In Appendix, we  discuss association between   the central configurations and  the relative  equilibria motions.

    \section{Relative equilibria   of the curved $n$-body problem and  main results  }\label{sec:cnbp}
    
     In this section,  we first briefly review the  $n$-body problem  on the  three manifolds, $\R^{3}$, $\S^3$ and $\H^3$,  criterion for  relative  equilibria,  then state the main results of this paper. 
     Vectors are all column vectors, but written as row vectors in the  text.  The masses $m_1, ..., m_n$ are always positive. 
     
     \subsection{Relative equilibria of the  $n$-body  problem in $\R^3$}
     
     The   Newtonian $n$-body  problem in $\R^3$  studies the motion of $n$ particles in $R^3$  with masses $m_1, ..., m_n$ under the gravitational interaction. It   is a Lagrange mechanical system with Lagrangian function
     \[  L=  \frac{1}{2} \sum_{i=1}^n m_i \dot{\q}_i \cdot\dot{\q}_i   -U_0(\q),   \]
     where $\q=(\q_1, ..., \q_n)$, $\q_i=(x_i, y_i, z_i)\in \R^3$,  and $U_0=\sum \frac{m_im_j}{||\q_i-\q_j||} $ is the potential defined on the configuration space $(\R^3)^n-\D$, $\D=\cup_{1\le i<j\le n}\{\q\in (\R^3)^n\ \! | \! \q_i=\q_j\}$ 
     
     A \emph{ relative  equilibrium}  is  an integral curve of the system in the form of $A(t)\q$, with  $A(t)$  being a  uniform rotation in $SO(3)$. The corresponding configuration $\q$ is called \emph{a configuration of relative  equilibrium}, 
      Every uniform rotation in $SO(3)$ has an fixed axis. Assume that  the $z$-axis is the rotation axis. Then it is well-known that the  configurations of relative  equilibrium  must be on a plane perpendicular to the  $z$-axis.   Without lose of generality,  we assume that they are on the plane $\{z=0\}$, then they   are  critical points of 
    $ U_0-\lambda I_0$ for some $\lambda\in \R$,  where  $I_0(\q)=\sum m_i (x_i^2+ y^2_i)$. 
      
     Two  configurations of relative equilibrium  are said to be in one class if one can be deduced from the other by some rotation in  $SO(2)$  and  non-zero scalar multiplication. 
     The finiteness problem on  configurations of relative equilibrium  is: given $n$ masses $m_1, ..., m_n$, is the number of classes of  configurations of relative equilibrium  finite?
   In other words,  
is  the number of  configurations of relative equilibrium  in $ \{ \q\in(\R^2)^n-\D \! | \ \! I_0(\q)=1 \}/S^1 $ finite?
      For history and advance of this problem, we refer the readers to \cite{AK12, Sma70-3, Win47} and the references therein. 
      
     \subsection{ The  curved $n$-body  problem in $\S^3$ and $\H^3$}\label{sec:cnbpeuler}
  The  curved  $n$-body  problem   studies  the motion of $n$ particles interacting  under the so-called cotangent potential  in $\S^3$  and $\H^3$.   The two manifolds can be parameterized in many ways.  The Cartesian coordinates are convenient in many cases.  That is,  $\S^3$ (resp. $\H^3$) is the unit sphere in $\R^4$ (resp. $\R^{3,1}$). 
 Recall that the`inner product in those 4-dimensional linear space are 
$ \q_1\cdot \q_2 = x_1x_2+y_1y_2+z_1z_2 +\sigma w_1w_2,  $
  where $\q_i=(x_i,y_i,z_i,w_i)$,  $\sigma=1$ for $\R^4$ and $\sigma=-1$ for $\R^{3,1}$.   Then $\S^3=\{ (x,y,z,w)\in \R^4| x^2+y^2+z^2+w^2=1  \}$,  $\H^3=\{(x,y,z,w)\in \R^{3,1}| x^2+y^2+z^2-w^2=-1, w\ge 1  \}$.  The Riemannian metrics on  $\S^3$ and $\H^3$ are induced from the ``inner product''. The  distance   between two   point masses $m_i$ and $m_j$,   $d_{ij}=d(\q_i , \q_j)$,  is   computed by
  $\cos  d_{ij}= \q_i \cdot \q_j$  on $\S^3$ and   $\cosh  d_{ij}(\q)= -\q_i \cdot \q_j$  on $\H^3$.

The curved $n$-body problem in $\S^3$ is a Lagrange mechanical system with Lagrangian function
\[  L=  \frac{1}{2} \sum_{i=1}^n m_i \dot{\q}_i \cdot\dot{\q}_i   -U_1(\q),   \]
  where $\q=(\q_1, ..., \q_n)$, $\q_i\in \S^3$.  Let $\D_+$ be the 
  singular set   $\D_+=\cup_{1\le i<j\le n}\{\q\in (\S^3)^n\ \! | \!  \q_i=\pm \q_j\}$. 
   The potential  is $U_1=\sum m_im_j \cot d_{ij}$  defined on the configuration space $(\S^3)^n-\D_+$.   The  equations of motion  are
\cite{Dia13-1, DSZ18}:  
\begin{equation}\label{equ:motion}
\begin{cases}
m_i\ddot\q_i=\sum_{j=1,j\ne i}^n\frac{m_im_j [\q_j-\cos d_{ij}\q_i]}{\sin^3 d_{ij}}-\sigma m_i(\dot{\q}_i\cdot \dot{\q}_i)\q_i\cr
\q_i\cdot \q_i=\sigma, \ \ \ \ i=1,..., n.
\end{cases}
\end{equation}
 Recall that $\sigma=1$ for $\S^3$ and $\sigma=-1$ for $\H^{3}$. 

Likewise, the curved $n$-body problem in $\H^3$ is a Lagrange mechanical system with Lagrangian function
\[  L=  \frac{1}{2} \sum_{i=1}^n m_i \dot{\q}_i \cdot\dot{\q}_i   -U_{-1}(\q),   \]
where $\q=(\q_1, ..., \q_n)$, $\q_i\in \H^3$.  Let $\D_-$ be the 
singular set 
$\D_-=\cup_{1\le i<j\le n}\{\q\in (\H^3)^n\ \! | \! \q_i=\q_j\}$.  The potential is  $U_{-1}(\q)=\sum m_im_j \coth d_{ij}$ defined  on the configuration space $(\H^3)^n-\D_-$.  Replacing the trigonometrical functions  by the hyperbolic ones and putting $\sigma=-1$,   equations  \eqref{equ:motion}  become the equations of motion  for the curved $n$-body problem in $\H^3$. 
    \subsection{Relative equilibria   in $\S^3$ and $\H^3$}\label{sec:cnbpre}
   A {simple mechanical system with symmetry} in the terminology of Smale 
    is a Lagrange mechanical system on a manifold $M$ in the form of 
    $L=K(\dot{\q}) +U(\q)$, where $K$ is a Riemannian metric on $M$ and there is a Lie group $G$ acting on $M$ preserving  $K$ and $U$ smoothly.    A solution of the Euler-Lagrange's equation in the form of $A(t)\q$ is called a  relative  equilibrium if  $A(t)$   is a 1-parameter subgroup  of the group $G$. 
     It is well-known that the corresponding configuration $\q$ is a  critical point of the \emph{augmented potential}
\[ U_\xi(\q) =U(\q) +K(\xi_M(\q)),  \]    
    where $\xi$ belongs to the Lie algebra of $G$ and $\xi_M(\q)=\frac{d}{ds}|_{s=0} \exp(s\xi ) \q$ is 
    the vector field on $M$ generated by $\xi$ \cite{Mar92, Sma70-2}.

   The curved $n$-body problem in $\S^3$ (resp. $\H^3$) is a simple mechanical system with symmetry 
  $O(4)$ (resp. $O(3,1)$), the set of matrices that keeps the inner product in $\R^4$ (resp. $\R^{3,1}$). Let   $\xi$ be  some element in the Lie algebra of $O(4)$ ($O(3,1)$). Then  
  the 1-parameter subgroup of $O(4)$ ($O(3,1)$) takes the form of $\exp(t\xi)$, and the corresponding vector field  on the configuration space is $(\xi \q_1, ..., \xi \q_n)$.  The  augmented potential takes the form of  
    \[ U_{1} +   \frac{1}{2} \sum_{i=1}^nm_i \xi \q_i \cdot \xi \q_i, \ \   {\rm or} \  \  \  U_{-1} +\frac{1}{2} \sum_{i=1}^n m_i \xi \q_i \cdot \xi \q_i. \]
   This  coordinates-free way is adopted in the study of  relative  equilibria in the   Newtonian $n$-body problem in higher dimensions, see \cite{Che13, Pal80}.  

    It is convenient to use coordinates for our problem. Note that each 1-parameter subgroup is  conjugate to 
    \begin{equation}\label{equ:1-pg}
 \begin{split}
 & A_{\alpha, \beta}(t)= \begin{bmatrix}
 \cos\alpha t & -\sin \alpha t&0 &0\\
 \sin \alpha t&\cos\alpha t&0&0\\
 0&0&\cos\beta t & -\sin \beta t\\
 0&0&\sin \beta t&\cos\beta t
 \end{bmatrix} {\rm in } \   O(4) \ {\rm  and }, \ \ \\
 	&B_{\alpha, \beta}(t)= \begin{bmatrix}
 	\cos\alpha t & -\sin \alpha t&0 &0\\
 	\sin \alpha t&\cos\alpha t&0&0\\
 	0&0&\cosh \beta t & \sinh \beta t\\
 	0&0&\sinh \beta t&\cosh \beta t
 	\end{bmatrix} {\rm in } \   O(3,1).
 	\end{split}
 	\end{equation}
   We have neglected the  1-parameter subgroups of $SO(3,1)$ that represent the  parabolic rotations since they do not lead  to  relative  equilibria of the curved n-body problem, \cite{Dia13-1, DPV12}. Following Diacu,  \cite{Dia13-1},  in  $\S^3$,  we call relative  equilibria   \emph{elliptic} if only one of  $\a$ and $\beta$ is nonzero, and \emph{elliptic-elliptic} if $\a\b\ne0$;  in  $\H^3$, we  call relative  equilibria  \emph{elliptic} if $\a\ne0, \beta=0$, \emph{hyperbolic} if $\a=0, \beta\ne0$,  and  \emph{elliptic-hyperbolic} if $\a\b\ne0$.

    The  Lie algebra elements  corresponding to  $A_{\alpha, \beta}(t)$ and  $B_{\alpha, \beta}(t)$ are 
    \[  \begin{bmatrix}0&-\alpha&0&0\\
    \alpha&0&0&0\\ 0&0&0&-\beta\\0&0&\beta&0
    \end{bmatrix},    \   \begin{bmatrix}0&-\alpha&0&0\\
    \alpha&0&0&0\\ 0&0&0&\beta\\0&0&\beta&0
    \end{bmatrix}  
     \] 
   respectively.  Hence, the function  $K(\xi \q_1, ..., \xi \q_n)$ for the $\S^3$ case is $ \alpha^2 \sum_{i=1}^n m_i (x_i^2+y_i^2)/2+  \beta^2 \sum_{i=1}^n m_i (z_i^2+w_i^2)/2$. Note that $\q_i \cdot \q_i=1$. The function reduces to 
   \[ \frac{\alpha^2-\beta^2}{2}\sum_{i=1}^n m_i (x_i^2+y_i^2)+\frac{\beta^2}{2} \sum_{i=1}^n m_i.  \]
  Similarly, the function   
    $K(\xi \q_1, ..., \xi \q_n)$ for the $\H^3$ case reduces to 
   \[  \frac{\alpha^2+\beta^2}{2}\sum_{i=1}^n m_i (x_i^2+y_i^2)+\frac{\beta^2}{2} \sum_{i=1}^n m_i.  \] 
    
  Let $I_{1}(\q)= \sum _{i=1}^n m_i (x_i^2+y_i^2)$ (resp. $I_{-1}(\q)= \sum _{i=1}^n m_i (x_i^2+y_i^2)$) be the \emph{momentum  of inertia}  for configurations $\q$ in $\S^3$ (resp. $\H^3$). Let 
  \[ S_c^+= I_1^{-1}(c)-\D_+,  \ \ S_c^-= I_{-1}^{-1}(c)-\D_-. \]  
 We have the following criterion of  relative  equilibria. 

 \begin{theorem}[\cite{DSZ18}]\label{thm:recc}
	For the curved $n$-body problem, 
	 $A_{\alpha, \beta}(t)\q$ ($B_{\alpha, \beta}(t)\q$) is a  relative  equilibrium if and only if the configuration $\q$ is a critical point of 
	\[ U_1(\q)+\frac{\alpha^2-\beta^2}{2} \sum _{i=1}^n m_i (x_i^2+y_i^2),  \ {\rm or} \   \ U_{-1}(\q)+\frac{\alpha^2+\beta^2}{2} \sum _{i=1}^n m_i (x_i^2+y_i^2).  \]
\end{theorem}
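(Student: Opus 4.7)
The plan is to apply directly the general principle for simple mechanical systems with symmetry recalled in Section \ref{sec:cnbpre}: for such a system, a motion $A(t)\q$ with $A(t)$ a 1-parameter subgroup generated by an algebra element $\xi$ is a relative equilibrium exactly when $\q$ is a critical point on the constrained configuration manifold of the augmented potential $U_\xi(\q)=U(\q)+K(\xi_M(\q))$, with $K$ the kinetic metric. In the curved $n$-body problem the constrained manifold is $(\S^3)^n$ or $(\H^3)^n$, the symmetry group is $O(4)$ or $O(3,1)$ acting diagonally (smoothly preserving both $K$ and the distance-based potential $U_1$ or $U_{-1}$), and $\xi_M(\q)=(\xi\q_1,\ldots,\xi\q_n)$, so the hypotheses of the principle are all met and the problem reduces to computing $U_\xi$ explicitly.

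Next, I would specialize to the normal forms $A_{\alpha,\beta}(t)$ and $B_{\alpha,\beta}(t)$ of the 1-parameter subgroups. The reduction of $K(\xi_M(\q))$ already carried out in Section \ref{sec:cnbpre}---using $\q_i\cdot\q_i=\sigma$ to eliminate $z_i^2+\sigma w_i^2$ in favor of $x_i^2+y_i^2$---identifies $U_\xi(\q)$ with $U_1+\tfrac{\alpha^2-\beta^2}{2}\sum m_i(x_i^2+y_i^2)+\tfrac{\beta^2}{2}\sum m_i$ in the spherical case, and with $U_{-1}+\tfrac{\alpha^2+\beta^2}{2}\sum m_i(x_i^2+y_i^2)+\tfrac{\beta^2}{2}\sum m_i$ in the hyperbolic case. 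Since the additive constant $\tfrac{\beta^2}{2}\sum m_i$ plays no role in the critical-point equations, dropping it yields precisely the functionals displayed in the statement. General 1-parameter subgroups are handled by conjugation: every 1-parameter subgroup of $O(4)$, and every non-parabolic 1-parameter subgroup of $O(3,1)$, is conjugate to one of these normal forms, and since both \eqref{equ:motion} and $U_\xi$ are equivariant under the ambient symmetry group, conjugation only amounts to a change of coordinates.

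The step I expect to require the most care is verifying the general principle directly on these constrained submanifolds, rather than invoking it as a black box. Substituting $\q(t)=A(t)\q$ into \eqref{equ:motion} and using $\ddot\q_i(t)=A(t)\xi^2\q_i$ together with $\dot\q_i\cdot\dot\q_i=\xi\q_i\cdot\xi\q_i$ (valid because $A(t)$ preserves the ambient inner product) produces a time-independent algebraic system for $\q$. This system then has to be identified with the Lagrange-multiplier form of the constrained extremal equations for $U_\xi$, in which the $-\cos d_{ij}\q_i/\sin^3 d_{ij}$ contributions (and their hyperbolic analogues) together with the centripetal term $-\sigma m_i(\dot\q_i\cdot\dot\q_i)\q_i$ in \eqref{equ:motion} combine to supply the multipliers enforcing $\q_i\cdot\q_i=\sigma$. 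Once this matching between the $\q_i$-components on each side is written out, the equivalence of the dynamical and variational formulations---and hence the theorem---follows.
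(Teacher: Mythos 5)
Your proposal follows essentially the same route as the paper: the paper's Section 2.3 derives this theorem by invoking the standard fact that relative equilibria of a simple mechanical system with symmetry are critical points of the augmented potential $U_\xi = U + K(\xi_M(\q))$, reducing $K(\xi\q_1,\ldots,\xi\q_n)$ via $\q_i\cdot\q_i=\sigma$ to $\tfrac{\alpha^2\mp\beta^2}{2}\sum m_i(x_i^2+y_i^2)$ plus an irrelevant constant, and restricting to the normal forms $A_{\alpha,\beta}(t)$, $B_{\alpha,\beta}(t)$ after discarding the parabolic subgroups. Your additional direct verification by substituting $\q(t)=A(t)\q$ into the equations of motion is a sound way to make the black-box principle explicit, but it does not change the substance of the argument.
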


Thus all relative  equilibria, no matter it is elliptic, elliptic-elliptic, hyperbolic, or elliptic-hyperbolic, can be obtained by finding configurations that are critical points of one function 
\begin{equation}\label{equ:potential}
U_1-\lambda I_1, \ {\rm or} \  U_1|_{S_c^+}, \ ({\rm resp. } \ U_{-1}-\lambda I_{-1}, \ {\rm or} \  U_{-1}|_{S_c^-}). 
\end{equation}
This unity is obtained   by restricting the  relative equilibria to those in the form of $A_{\alpha, \beta}(t)\q$ ($B_{\alpha, \beta}(t)\q$) and by studying   relative  equilibria in the 3-dimensional  space.  
For example, in the hyperbolic case, if we study the relative  equilibria  on a 2-dimensional physical space,  then the augmented potentials for the elliptic relative  equilibria  and the hyperbolic ones are different, see \cite{DPV12, GMPR16}. 

Let $\q$  be a critical point of \eqref{equ:potential}. Then it  leads to  infinitely many   relative equilibria $A_{\alpha, \beta}(t)\q$ with $ \lambda$  equals $-\frac{\a^2-\beta^2}{2}$ (resp. $B_{\alpha, \beta}(t)\q$ with $ \lambda =-\frac{\alpha^2+\beta^2}{2}$), which maybe elliptic or elliptic-elliptic (resp. elliptic, hyperbolic, or elliptic-hyperbolic). So to study the relative  equilibria, it is convenient to start with the associated configurations. 
We will  discuss  the relationship between  the critical points of \eqref{equ:potential}   and  relative  equilibria  in the Appendix. 

In $\S^3$, the function $U_1$  has critical points. Besides relative equilibria, such configurations also lead to equilibria  of the system $\q(t)=\q$.  Let $\q$  be such a critical point.  Then it is a critical point of $U_1-\lambda I_1$ with $\lambda=0$, or with $\lambda$ being any real value  if it is  one  configuration that  lies on $\S^1_{xy}\cup S^1_{zw}$, two circles to be defined in Section 2.5  \cite{Dia13-1, DSZ18, YZ19}.  	 The property of these configurations and the other   critical points of \eqref{equ:potential} are  different in many ways. This motivates the following definition. 
		
		\begin{definition}[\cite{DSZ18}]
			A configuration $\q$ is called a \emph{central configuration} if it is a critical point of \eqref{equ:potential}. If it is a critical point of $U_1$, it is a \emph{ special central configuration} or an \emph{equilibrium configuration}; otherwise,  it is an \emph{ordinary central configuration}. The value $\lambda$ in \eqref{equ:potential} is the \emph{multiplier}. 
		\end{definition}

 The central configurations defined above are deduced from the relative equilibria, and may not have some nice properties possessed  by the well-known central configurations  of the   Newtonian $n$-body problem. One can refer to \cite{DSZ18} for more of their properties. 
  We only  discuss the ordinary  central  configurations in the main part of this paper.


\subsection{The counting of  ordinary central configurations}  Thanks to Theorem \ref{thm:recc},  we can discuss the counting of ordinary central configurations. 
Note that the set of ordinary central configurations in $\S^3$  (resp. $\H^3$) is invariant under the action of $SO(2)\times SO(2)$  (resp. $SO(2)\times SO^+(1, 1)$) on the configuration space. The group  $SO^+(1,1)$ is  $\{\begin{bmatrix}
\cosh s  & \sinh  s \\
\sinh s &\cosh s 
\end{bmatrix}|  s\in \R \}$, the identity component of $SO(1,1) $.   The symmetry acts on the  configuration space in the following way.   For instance, in $\H^3$, 
let  $\chi=(\chi_1, \chi_2)\in SO(2)\times SO^+(1,1) $. Then 
\[ \chi \q = (\chi \q_1, ..., \chi \q_n), \  \chi \q_i=(\chi_1 (x_i,y_i), \chi_2 (z_i,w_i) ).   \]
The quotient of the set of ordinary central configurations under $SO(2)\times SO(2)$ (resp. $SO(2)\times SO^+(1,1)$) will be called the set of \emph{classes of  ordinary central configurations}.      

The major difference between the ordinary central configurations and the configurations of relative  equilibrium in the Newtonian case is the lack of homothety symmetry. For the   Newtonian $n$-body problem, by the homothety symmetry, the set of configurations of relative  equilibrium on $I_0^{-1}(c)$  equivalent to that on  $I_0^{-1}(1)$. So
it is enough to do the   counting just on $I_0^{-1}(1)$.  For the curved $n$-body problem, the structure of  the set of  ordinary central configurations 
   depends on the value of $I_{\pm 1}(\q)$ in an essential way.    For instance, for two given masses in
   $\S^3$,  the number of  ordinary central configurations    varies as the  value of $I_1$ varies \cite{DSZ18}.  It is also easy to see the existence of critical points of $U_1|_{S^+_c}$ and $U_{-1}|_{S^-_c}$ for each $c$ in some interval. 
   \begin{corollary}[\cite{DSZ18}]
   	For any given $n\ge 2$  masses, there are infinitely many classes of  ordinary central configurations in the curved $n$-body problem in $\S^3$ and $\H^3$. 
   \end{corollary}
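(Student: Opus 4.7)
The plan is to produce, for each $c$ in an open interval of positive reals, at least one class of ordinary central configurations whose moment of inertia equals $c$; since $I_{\pm 1}$ is invariant under the symmetry groups $SO(2)\times SO(2)$ and $SO(2)\times SO^+(1,1)$ (only the first two coordinates of each $\q_i$ enter the formula, and the $SO(2)$-factor preserves $x^2+y^2$), distinct values of $c$ automatically give distinct classes, so an interval's worth of $c$ yields infinitely (indeed uncountably) many classes.

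To produce a critical point on each level set, I would minimize the restricted potential $U_{\pm 1}|_{S^\pm_c}$. The cotangent potential blows up to $+\infty$ near the collision set $\Delta_\pm$ (and near the antipodal configurations in $\S^3$), while the constraint $\sum m_i(x_i^2+y_i^2)=c$ controls the ``rotational'' coordinates. After reducing by the relevant part of the symmetry group---crucially, quotienting by the $SO^+(1,1)$ factor in $\H^3$ to restore compactness in the $(z,w)$-directions---a minimizing sequence should converge to some $\q_c \in S^\pm_c$ away from singularities. By Lagrange multipliers, $\q_c$ is a critical point of $U_{\pm 1}-\lambda I_{\pm 1}$ for some $\lambda=\lambda(c)$, hence a central configuration by Theorem 1.

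To distinguish ordinary from special, I would use that special (equilibrium) configurations are exactly the critical points of $U_{\pm 1}$ itself, so their values of $I_{\pm 1}$ form at most a discrete set; choosing $c$ outside that set forces $\lambda(c)\neq 0$ and makes $\q_c$ ordinary. Running $c$ over the complement of this discrete set within the interval then yields uncountably many distinct classes, completing the proof.

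The main obstacle will be the noncompactness of the level set in $\H^3$, where the ``vertical'' coordinates $(z_i,w_i)$ can escape along the hyperbolic direction while the moment-of-inertia constraint only bounds $(x_i,y_i)$. Here the $SO^+(1,1)$ boost-symmetry, together with the precise asymptotic behavior $\coth d_{ij}\to 1^+$ as $d_{ij}\to\infty$, should be enough to control minimizing sequences and guarantee that the infimum is attained at a genuine configuration rather than escaping to infinity; in $\S^3$ the analogous compactness is automatic once one stays away from $\Delta_+$.
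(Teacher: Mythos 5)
Your overall skeleton --- produce one critical point of $U_{\pm1}$ restricted to each level set $S^{\pm}_c$, then use the invariance of $I_{\pm1}$ under $SO(2)\times SO(2)$, resp.\ $SO(2)\times SO^+(1,1)$, so that distinct values of $c$ give distinct classes --- is exactly the mechanism behind the paper's one-line justification (``the existence of critical points of $U_1|_{S^+_c}$ and $U_{-1}|_{S^-_c}$ for each $c$ in some interval''). The gap is in the existence step, and it is genuine for both signs of curvature. In $\H^3$ the infimum of $U_{-1}$ on $S^-_c$ is not attained: the constraint $\sum m_i(x_i^2+y_i^2)=c$ leaves the $z_i$ completely free, so a minimizing sequence can send mutual distances to infinity while respecting the constraint, and since $\coth d_{ij}\searrow 1$ as $d_{ij}\to\infty$ the potential strictly decreases to an unattained infimum. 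Quotienting by $SO^+(1,1)$ does not repair this, because the boost is a single overall symmetry direction and cannot absorb the \emph{relative} separation of the particles; the asymptotics $\coth d\to 1^+$ that you invoke is precisely the mechanism by which the minimizer escapes. In $\S^3$ the situation is worse: $U_1$ is unbounded below on $S^+_c$, since $\cot d_{ij}\to-\infty$ as $\q_i\to-\q_j$ and antipodal pairs belong to the singular set $\D_+$ which every level set approaches, so ``staying away from $\D_+$'' is not something a global minimizing sequence will do for you. A secondary issue: your claim that the $I_{\pm1}$-values of special central configurations form a discrete set is unsubstantiated --- a single $O(4)$-orbit of special configurations, cut down only by the smaller group $SO(2)\times SO(2)$, can sweep out an interval of $I_1$-values.

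The repair, which is how the paper and \cite{DSZ18} actually proceed, is to minimize on a compact invariant slice rather than on all of $S^{\pm}_c$. On $(\H^1_{xw})^n$ or $(\H^2_{xyw})^n$ the level set $I_{-1}^{-1}(c)$ is an ellipsoid, hence compact, and $U_{-1}\to+\infty$ at the collision boundary, so a minimum exists for every $c>0$; on $(\S^1_{xz})^n$ or $(\S^2_{xyz})^n$ one restricts further to the component $\mathcal M_c$ with all $z_i>0$ and $c$ small (there $\cos d_{ij}\ge 1-2c/m_1>-1$, so antipodal degenerations are excluded and $U_1$ is bounded below). A critical point of the restriction to these reflection-symmetric submanifolds is a genuine critical point of the full constrained problem, since the omitted gradient components vanish by symmetry, as one reads off from equations \eqref{equ:ree-yzws3}--\eqref{equ:ree-xyzh3}; and Propositions \ref{pro:h2lam} and \ref{pro:s2lam} give $\lambda<0$ there, so the resulting configurations are ordinary without any discreteness argument. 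With existence secured for all $c$ in an interval, your invariance-of-$I_{\pm1}$ observation finishes the proof as you intended.
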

   
   Thus, to make the  counting  problem reasonable, we propose to count  ordinary central configurations on  $S^+_c$ ($S^-_c$) for different value of $c$.  To imitate the finiteness problem on configurations of relative equilibrium of 
   the Newtonian $n$-body problem \cite{Sma70-3}, we ask: Are there always only finitely many  ordinary central configurations classes on  $S^+_c$ ($S^-_c$) for  the curved $n$-body problem for almost all choices of masses $(m_1, ..., m_n)$? The answer is negative for some  choices of masses. For example,   for  two masses $m_1=m_2$
   in $\S^3$, there are infinitely many classes 
   of  ordinary central configurations  on $S^+_{m_1}$, see Section 10 of  \cite{DSZ18}. 
   

\subsection{Main results}
We are interested in the  investigation of  the set of the  ordinary central configurations. We first consider  the compactness, then   focus on the counting   of  geodesic  ordinary central configurations and their Morse index.  We postpone the proofs of  Proposition \ref{pro:h2cccomp_lam}, Theorem \ref{thm:h3cccomp} and \ref{thm:s3cccomp} to Section \ref{sec:com}, the proofs of Theorem\ref{thm:cc_h1&ind} and \ref{thm:cc_s1&ind} to Section  \ref{sec:ind}. 

In $\H^3$,   similar to the  singular set   of the Newtonian n-body problem, a point in $\D_-$ can be written as
$$X=(\q'_1, ..., \q'_{k_1}, \q'_{k_1+1}, ..., \q'_{k_2}, ..., \q'_{k_{s-1}+1}, ..., \q'_{k_s}), $$ 
$\q'_k =(x'_k, y'_k, z'_k, w'_k)$, where we have grouped the equal terms $\q'_1=...= \q'_{k_1}, \q'_{k_1+1}=...= \q'_{k_2}, ..., \q'_{k_{s-1}+1}= ...= \q'_{k_s},  k_s=n$.  We call each group of particles a \emph{cluster} of $X$. Denote by $\Lambda_i$ the index set of the $i$-th cluster, i.e., $\Lambda_i=\{k_{i-1}+1, ..., k_i \}$,  $1\le i\le  s$. Let  $|\Lambda_1|=k_1, ..., |\Lambda_i|=k_i -k_{i-1},  ..., |\Lambda_s|=n-k_{s-1}$.     Let $\q(l)=(\q_{1(l)}, ..., \q_{n(l)}), l=1, ..., \infty, $ be   a sequence of  ordinary central configurations that converges to $X$.  Let $\q_{i(l)} = (x_{i(l)}, y_{i(l)}, z_{i(l)}, w_{i(l)})$. Denote by  $\lambda(l)$ the multiplier of $\q(l)$.  

   \begin{proposition}\label{pro:h2cccomp_lam}
	Given $n$ masses in $\H^3$, 
	if there is a sequence of    ordinary central configurations  that converges to  some point $X\in \Delta_-$, 
	then the sequence of  multipliers approaches $-\infty$.  
\end{proposition}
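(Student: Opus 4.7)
The plan is to isolate $\lambda$ by summing the critical-point identities over a single collapsing cluster and then reading off the $w$-component of the resulting ambient-space equation. For an ordinary central configuration $\q$ in $\H^3$ with multiplier $\lambda$, computing the Riemannian gradient of $U_{-1}-\lambda I_{-1}$ at $\q_i$ and writing it in the ambient space $\R^{3,1}$ yields, for each $i$,
\[
\sum_{k\ne i}\frac{m_k\bigl(\q_k-\cosh d_{ik}\,\q_i\bigr)}{\sinh^3 d_{ik}} \;=\; 2\lambda\bigl[(x_i,y_i,0,0)+(x_i^2+y_i^2)\,\q_i\bigr].\qquad(\ast_i)
\]
Both sides lie in $T_{\q_i}\H^3$; the feature I want to exploit is that the first summand on the right has zero $w$-component, while the internal pair contributions on the left will produce a definite sign in the $w$-component.

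Fix a cluster $\Lambda_s$ with $|\Lambda_s|\ge 2$ (one exists because $X\in\Delta_-$), let $\q'_s=(x'_s,y'_s,z'_s,w'_s)$ be its limit, and note $w'_s\ge 1$. I would then consider the weighted sum $\sum_{i\in\Lambda_s} m_i\,(\ast_i)$ and split the double sum on the left into internal pairs (both indices in $\Lambda_s$) and mixed pairs. Mixed pairs remain bounded as $l\to\infty$ because $d(\q_i(l),\q_k(l))$ stays away from $0$ whenever $i$ and $k$ belong to different clusters. For each internal pair, combining the $(i,k)$ and $(k,i)$ contributions collapses to
\[
\frac{m_im_k\,(1-\cosh d_{ik})(\q_i+\q_k)}{\sinh^3 d_{ik}},
\]
and the elementary asymptotics $1-\cosh d\sim -d^2/2$ and $\sinh^3 d\sim d^3$ as $d\to 0^+$, together with $\q_i+\q_k\to 2\q'_s$, show that the full internal sum is asymptotic to $-\alpha(l)\,\q'_s$, where
\[
\alpha(l):=\sum_{\{i,k\}\subset\Lambda_s}\frac{m_im_k}{d_{ik}(l)}\;\longrightarrow\;+\infty.
\]

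Reading off the fourth ($w$) component of the identity $\sum_{i\in\Lambda_s}m_i(\ast_i)$ then gives
\[
-w'_s\,\alpha(l)+O(1)\;=\;2\lambda(l)\sum_{i\in\Lambda_s}m_i\bigl(x_i(l)^2+y_i(l)^2\bigr)w_i(l)\;=:\;2\lambda(l)\,A(l),
\]
where $A(l)\ge 0$ since every $w_i\ge 1$. The left-hand side tends to $-\infty$ because $w'_s\ge 1$. If $A(l)$ stays bounded above with positive liminf, this forces $\lambda(l)\to-\infty$; if $A(l)\to 0$ (the limiting cluster sitting on the $zw$-axis), the divergence of $\lambda(l)$ to $-\infty$ is only accelerated. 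Either way, $\lambda(l)\to-\infty$.

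The main obstacle is that the Euclidean analogue of Newton's third law fails here: the leading singular terms in the internal-force sum over a collapsing cluster do \emph{not} cancel in pairs, so one cannot simply deduce $|\lambda|\to\infty$ by balancing bounded ambient gradients against unbounded ones. One must also control the direction of the non-cancellation. What saves the argument is that, after symmetrization, the surviving singular term points uniformly along $-\q'_s$ in the ambient $\R^{3,1}$, and the $w$-component of the Riemannian gradient of $I_{-1}$ is the nonnegative quantity $2m_i(x_i^2+y_i^2)w_i$; the sign mismatch between these two pieces is precisely what pins the divergence to $-\infty$ rather than $+\infty$.
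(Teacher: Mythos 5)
Your argument is correct. The ambient identity $(\ast_i)$ is the right Lagrange--multiplier form of the central configuration equations in $\R^{3,1}$, the pairwise symmetrization producing $\frac{m_im_k(1-\cosh d_{ik})(\q_i+\q_k)}{\sinh^3 d_{ik}}$ is computed correctly, and the elementary asymptotic $(1-\cosh d)/\sinh^3 d\sim -1/(2d)$ together with the nonnegativity and boundedness of $A(l)=\sum_{i\in\Lambda_s}m_i(x_i^2+y_i^2)w_i$ does force $\lambda(l)\to-\infty$. The paper reaches the same conclusion by a closely related but distinct route: it writes the equations in the chart $(x,y,z)$, picks the single particle of the collapsing cluster with the largest $w$-coordinate, and pairs its equation with $\u_i=(x_i,y_i,z_i)$ to get the scalar identity $2\lambda m_i(x_i^2+y_i^2)=\sum_{j\ne i}m_j\bigl(\tfrac{w_j}{w_i}-\cosh d_{ij}\bigr)/\sinh^3 d_{ij}$, where the extremal choice of $i$ makes every internal term at most $(1-\cosh d_{ij})/\sinh^3 d_{ij}\to-\infty$, with no summation over the cluster needed. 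You replace that extremal-particle trick by summing over the whole cluster and symmetrizing -- precisely the standard cure for the failure of pairwise cancellation that you flag -- which is marginally more robust (no distinguished particle to select) at the cost of slightly heavier bookkeeping when distinct pairs in the cluster collapse at different rates; there it is cleaner to say that the $w$-component of every symmetrized internal term is eventually at most $-\tfrac12 m_im_k w'_s/d_{ik}$, so the terms reinforce, rather than to assert the sum is ``asymptotic to'' $-\alpha(l)\,\q'_s$. Both proofs then conclude identically from the fact that $2\lambda(l)$ times a bounded nonnegative quantity tends to $-\infty$.
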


 The set of ordinary central configurations is not compact. For example, consider 
 the following  regular polygonal  configuration formed by n equal masses
 \[  \q_i=(\sinh \th \cos \frac{i 2 \pi}{n}, \sinh \th \cos \frac{i 2 \pi}{n},  0, \cosh \th), \  i=1, ..., n. \]
 It is easy to check that it is an ordinary central configuration for any $\th \in (0, \infty)$. These ordinary central configurations are not in one class since there is no homothety symmetry in the set of ordinary central configurations. 
 As $\th\to 0$, the configuration converges to a singular configuration.  Note that the momentum of inertia of that singular point is $0$.

\begin{theorem}\label{thm:h3cccomp}
	Given $n$ masses in $\H^3$ and any point $X\in \D_-$ with $I_{-1}(X)=c>0$, there is a neighborhood of $X$ in which there is no  ordinary central configuration. 
\end{theorem}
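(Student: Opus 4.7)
The plan is to derive a contradiction from Proposition~\ref{pro:h2cccomp_lam} combined with an analysis of the tangential critical-point equations restricted to a suitable cluster. Suppose, for contradiction, that $\q(l)$ is a sequence of ordinary central configurations converging to some $X\in\D_-$ with $I_{-1}(X)=c>0$; by Proposition~\ref{pro:h2cccomp_lam}, the multipliers satisfy $\lambda(l)\to-\infty$. Since $c=\sum_k M_k\bigl((x'_k)^2+(y'_k)^2\bigr)>0$, where $M_k$ is the total mass of the $k$-th cluster, there exists a cluster $\Lambda_\ast$ whose common limit $\q'=(x',y',z',w')$ satisfies $(r')^2:=(x')^2+(y')^2>0$. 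I will show that the critical-point equations on $\Lambda_\ast$ force $|\lambda(l)|$ to stay bounded, contradicting the divergence above.

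Project the critical-point condition $\nabla_{\q_i}^{\mr{Mink}}(U_{-1}-\lambda I_{-1})\in\mr{span}(\q_i)$ onto the tangent vectors $e_x+x_i\q_i$ and $e_w-w_i\q_i$ at each $\q_i\in\H^3$; using the identity $w_i^2-z_i^2=1+x_i^2+y_i^2$ on $\H^3$ one obtains the scalar equations
\[
\sum_{j\ne i}\frac{m_j(x_j-x_i\cosh d_{ij})}{\sinh^3 d_{ij}}=2\lambda x_i(1+x_i^2+y_i^2),\quad \sum_{j\ne i}\frac{m_j(w_j-w_i\cosh d_{ij})}{\sinh^3 d_{ij}}=2\lambda w_i(x_i^2+y_i^2).
\]
Assume first that $|\Lambda_\ast|\ge 2$. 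Multiply each identity by $m_i$ and sum over $i\in\Lambda_\ast$; symmetrising the within-cluster double sum by pairing $(i,j)$ with $(j,i)$ reshapes each pair's contribution into $m_im_j(x_i+x_j)(1-\cosh d_{ij})/\sinh^3 d_{ij}$ (respectively with $w$). The local expansion $(1-\cosh d)/\sinh^3 d=-1/(2d)+O(d)$ then shows that the within-cluster $x$-sum and $w$-sum are $-x'S\bigl(1+o(1)\bigr)$ and $-w'S\bigl(1+o(1)\bigr)$ respectively, where $S:=\sum_{\{i,j\}\subset\Lambda_\ast}m_im_j/d_{ij}$; the $o(1)$ here uses the cluster diameter $D:=\max_{i\in\Lambda_\ast}|\q_i-\q'|\to 0$. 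Cross-cluster contributions are $O(1)$ by separation of distinct clusters, and the right-hand sides equal $2\lambda x'M_\ast(1+(r')^2)+o(|\lambda|)$ and $2\lambda w'M_\ast(r')^2+o(|\lambda|)$ because $|\lambda|\cdot D=o(|\lambda|)$. Dividing by $x'$ (WLOG nonzero, else use the $y$-equation) and by $w'\ge 1$ produces two expressions for $S$:
\[
S=-2\lambda M_\ast(r')^2+o(|\lambda|)+O(1)=-2\lambda M_\ast(1+(r')^2)+o(|\lambda|)+O(1).
\]
Subtracting yields $2\lambda M_\ast=o(|\lambda|)+O(1)$, which forces $|\lambda(l)|$ to remain bounded, contradicting Proposition~\ref{pro:h2cccomp_lam}.

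If instead $|\Lambda_\ast|=1$, the isolated particle $i_0$ has all $d_{i_0 j}$ bounded below as $l\to\infty$, so the left-hand side of its $x$-equation (or $y$-equation if $x'=0$) stays bounded while the right-hand side $2\lambda x_{i_0}(1+x_{i_0}^2+y_{i_0}^2)\to 2\lambda x'(1+(r')^2)$ is unbounded: immediate contradiction. The chief technical obstacle is the control of the $o(|\lambda|)$ and $O(1)$ remainders in the $|\Lambda_\ast|\ge 2$ case; this ultimately rests on the estimate $D\cdot S=o(S)$, which holds because $D\to 0$ (by definition of cluster) while $S\ge \mr{const}/D$, so the symmetrised error terms and the linearisation $w_i(x_i^2+y_i^2)-w'(r')^2=O(D)$ produce only sublinear-in-$|\lambda|$ remainders once one combines them with the bound $S=O(|\lambda|)$ that comes out of the $w$-equation itself.
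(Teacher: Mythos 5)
Your proposal is correct, and it shares the paper's skeleton --- invoke Proposition \ref{pro:h2cccomp_lam} to force $\lambda(l)\to-\infty$, select a cluster whose limit satisfies $(x')^2+(y')^2>0$ (which exists because $I_{-1}(X)=c>0$), sum the critical-point equations over that cluster, and contradict the boundedness of the left-hand side --- but the key step is executed by a genuinely different mechanism. The paper works with the chart form \eqref{equ:ree-xyzh3} and multiplies the $i$-th equation by $w_i$, so that the interaction term becomes $m_im_j(w_i\u_j-w_j\u_i)/\sinh^3 d_{ij}$, which is exactly antisymmetric in $(i,j)$; summing over the cluster annihilates every singular within-cluster term identically, the left side is $O(1)$ with no asymptotics, and the contradiction with $2\lambda\sum_{i\in\Lambda_1}m_iw_i(x_i,y_i,0)\to\infty$ is immediate. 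Your projections onto $e_x+x_i\q_i$ and $e_w-w_i\q_i$ do not yield an antisymmetric numerator, so the within-cluster sum diverges like $S=\sum m_im_j/d_{ij}$ and you must eliminate $S$ by playing the $x$- and $w$-projections against each other; this works, and your bookkeeping is the right one ($S\gtrsim 1/D$ makes $O(1)+O(DS)=o(S)$, and $S=O(|\lambda|)$ from the $w$-equation turns the $o(S)$ remainders into $o(|\lambda|)$), but it is all avoidable: forming $w_i\cdot(x\text{-equation})-x_i\cdot(w\text{-equation})$ gives the numerator $w_ix_j-x_iw_j$ and right-hand side $2\lambda x_iw_i$, recovering the paper's exact cancellation. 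What your version buys is a quantitative picture (the rate at which the within-cluster forces diverge, and the fact that two independent projections pin down the same singular sum $S$), at the cost of several layers of error estimates that the algebraic cancellation renders unnecessary; your singleton-cluster case coincides with what the paper's argument gives there, since the within-cluster sum is then empty in both treatments.
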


    \begin{remark}\label{rem:tib}
	Consider the subset of ordinary central configurations on $\H^2_{xyw}$, the intersection of $\H^3$ and the hyperplane $z=0$,  with the property that all particles  lie  on a  same plane perpendicular to the $w$-axis and the value of the multiplier is fixed. 
	Tibboel   \cite{Tib14}  proved that this subset is compact in the configuration space. Our result is stronger. 
\end{remark}

In $\S^3$, a point in $\D_+$ can be written as
\begin{equation}\label{equ:sings3}
X=(\q'_1, ..., \q'_{k_1}, \q'_{k_1+1}, ..., \q'_{k_2}, ..., \q'_{k_{2s-1}+1}, ..., \q'_{k_{2s}}), 
\end{equation}
where we have grouped the equal and antipodal  terms $\q'_1=...= \q'_{k_1}, \q'_{k_1+1}=...= \q'_{k_2}, -\q'_{k1}=\q'_{k_2},  ..., \q'_{k_{2s-1}+1}= ...= \q'_{k_{2s}},  -\q_{k_{2s-1}}=\q'_{k_{2s}},  k_{2s}=n$.  
 If $|\Lambda_{k}|>1$, particles in the $k$-th cluster form a \emph{collision singular configuration}. If $|\Lambda_{2i-1}|=|\Lambda_{2i}|=1$, particles in the two cluster form an  \emph{antipodal singular configuration}. If  
$|\Lambda_{2i-1}|\ge 2$  and $|\Lambda_{2i}|\ge 1$,  particles in the two cluster form a  \emph{collision-antipodal singular configuration}. 

As in the case of $\H^3$, the set of ordinary central configurations 
is  not compact. For instance, consider the regular polygonal configurations formed by $n$ equal masses at position 
\[  \q_i=(\sin \th \cos \frac{i 2 \pi}{n}, \sin \th \cos \frac{i 2 \pi}{n},  \cos \th, 0), \  i=1, ..., n. \]
They approach a collision singular configuration with momentum of inertia $0$ as $\th \to 0$. The situation is more complicated than that in $\H^3$.  By the transform $\tau\in O(4)$ of Theorem \ref{thm:s2cc},  we get  a  1-parameter family  of ordinary central configurations approaching a singular point with momentum of inertia $n$. Another examples, consider three masses $m_1=m, m_2=m_3=M$ at position
\[\q_1=(1, 0,0,0), \ \q_2=( -\cos \th, 0, \sin \th, 0), \ \q_3=( -\cos \th, 0, -\sin \th, 0).  \]
It is an ordinary central configuration for any $\th\ne0$ by equation \eqref{equ:ree-yzws3}.    As $\th \to 0$, the configurations approaches a collision-antipodal singular configuration. Let $\S_{xy}^1:=\{(x,y,z,w) \in \S^3: z=w=0\}$, $\S_{zw}^1:=\{(x,y,z,w) \in \S^3: x=y=0\}$.   Note that the singular configurations of  the  three examples all lie on the union of two circles,  
$\S_{xy}^1\cup\S_{zw}^1$.

We consider only a subset, denoted by $\mathcal A$,  of  $\D_+$ with the following  two properties:  1. if $X\in \mathcal A$, then not all particles of $X$ lie on $\S^1_{xy}\cup \S^1_{zw}$; 2.  $X$ contains collision singular sub configuration or antipodal  singular sub configuration, that is, there is some $i$ such that $|\Lambda_{2i-1}|\ge 2, |\Lambda_{2i}|=0$,  or, $|\Lambda_{2i-1}|=|\Lambda_{2i}|= 1$.  

\begin{theorem}\label{thm:s3cccomp}
	Given $n$  masses in $\S^3$ and any point $X\in \mathcal A$, there is a neighborhood of $X$ in which there is no    ordinary central configuration. 
\end{theorem}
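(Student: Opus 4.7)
The plan is to argue by contradiction, adapting the strategy used for the hyperbolic case (Theorem \ref{thm:h3cccomp}). Assume there is a sequence $\q(l)$ of ordinary central configurations in $\S^3$, with multipliers $\lambda(l)$, converging to some $X\in\mathcal A$. The critical point condition for $U_1-\lambda I_1$ under the constraint $\q_i\cdot\q_i=1$ gives, in the ambient $\R^4$,
\[
\sum_{j\ne i}\frac{m_j\bigl(\q_{j(l)}-\cos d_{ij}(l)\,\q_{i(l)}\bigr)}{\sin^3 d_{ij}(l)}=2\lambda(l)\bigl[(x_{i(l)},y_{i(l)},0,0)-(x_{i(l)}^2+y_{i(l)}^2)\q_{i(l)}\bigr]
\]
after projecting onto the tangent space $T_{\q_i}\S^3$. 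Decomposing $X$ as in \eqref{equ:sings3}, I focus on a singular cluster of the type guaranteed by property 2 of $\mathcal A$.

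\textbf{First step (forcing $|\lambda(l)|\to\infty$).} For a collision cluster $\Lambda$ with $|\Lambda|\ge 2$, pick $p,q\in\Lambda$ so that $d_{pq}(l)\to 0$, and take the inner product of the $\q_p$-equation with $\q_q$: the $j=q$ summand on the left contributes $m_q(1-\cos^2 d_{pq})/\sin^3 d_{pq}=m_q/\sin d_{pq}(l)\to+\infty$, while summands from particles whose limits lie away from $\pm\q'$ stay bounded. The right side equals $2\lambda(l)[x_px_q+y_py_q-(x_p^2+y_p^2)(\q_p\cdot\q_q)]$; a Taylor expansion at the common limit $\q'$ shows that this bracketed coefficient vanishes at rate $O(|\q_p-\q_q|)=O(\sin d_{pq}(l))$ when $\q'\notin\S^1_{xy}\cup\S^1_{zw}$, and at rate $O(\sin^2 d_{pq}(l))$ when $\q'$ does lie on one of those circles. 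Equating both sides forces $|\lambda(l)|\to\infty$, at rate at least $1/\sin^2 d_{pq}(l)$ or $1/\sin^3 d_{pq}(l)$ respectively. In the antipodal case $|\Lambda_{2i-1}|=|\Lambda_{2i}|=1$ with $\q_p\to\q'$, $\q_q\to-\q'$, the identity $|\q_p+\q_q|=O(\sin d_{pq})$ (which follows from $d_{pq}\to\pi$) makes the analogous bracketed coefficient vanish at the same rate, yielding the same conclusion.

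\textbf{Second step (deriving the contradiction).} By property 1 of $\mathcal A$, I pick an index $r$ whose limit $\q_r'$ does not lie on $\S^1_{xy}\cup\S^1_{zw}$. If $r$ sits outside every singular cluster of $X$, the left side of the $\q_r$-equation is bounded, yet the right side contains the vector $2\lambda(l)P_{\q_r}(x_r,y_r,0,0)$, whose norm blows up because $|P_{\q_r}(x_r,y_r,0,0)|$ is bounded below ($P_{\q_r'}(x',y',0,0)\ne 0$ precisely because $\q_r'\notin\S^1_{xy}\cup\S^1_{zw}$). Contradiction.

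\textbf{Main obstacle.} The remaining scenario is when every particle with off-circle limit belongs to some singular cluster. Here one must carry out a finer directional analysis of the $\q_r$-equation: the singular contributions on the left lie in the span of the relative displacements among the particles of $r$'s cluster, an at most $2$-dimensional subspace of the $3$-dimensional $T_{\q_r'}\S^3$, so projecting onto a direction transverse to it leaves a bounded left side. One then chooses this transverse direction so that $P_{\q_r'}(x',y',0,0)$ has non-zero component on it, which is possible generically but requires care when the cluster's collapse direction happens to align with $P_{\q_r'}(x',y',0,0)$. Matching the rates of blow-up of $\lambda(l)$ extracted from different singular clusters, which may collapse at distinct scales, is what makes this bookkeeping the most delicate part of the proof.
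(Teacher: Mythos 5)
Your overall two-step skeleton (force $|\lambda(l)|\to\infty$ from the collision/antipodal sub-configuration, then derive a contradiction from a particle whose limit is off $\S^1_{xy}\cup\S^1_{zw}$) is the right one and matches the paper, but the proof is not closed: the case you label ``Main obstacle'' --- the off-circle particle belongs to a singular cluster --- is precisely the case the paper has to handle, and the transverse-projection idea you sketch does not obviously work and is left unfinished by your own admission. The paper's resolution is a cluster-summation trick, the same one used for Theorem \ref{thm:h3cccomp}: write the equation for particle $i$ in the form $\sum_{j\ne i}\frac{w_i\u_j-w_j\u_i}{\sin^3 d_{ij}}=2\lambda m_iw_i(x_i,y_i,0)$ (having chosen the chart so that $w'\ne 0$ on the off-circle cluster) and sum over \emph{all} $i$ in that cluster \emph{together with its antipodal partner cluster} $\Lambda_1\cup\Lambda_2$. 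The singular internal terms cancel pairwise because $\frac{w_i\u_j-w_j\u_i}{\sin^3 d_{ij}}$ is antisymmetric in $i\leftrightarrow j$, interactions with the remaining clusters stay bounded, so the left side is $O(1)$; meanwhile the right side tends to $2\lambda\,(\sum_{i\in\Lambda_1\cup\Lambda_2}m_i)(x_1'w_1',y_1'w_1',0)\ne0$ --- note the antipodal cluster contributes with the \emph{same} sign since both $x$ and $w$ flip --- and this blows up with $\lambda$. No directional or rate analysis is needed, and the argument covers collision, antipodal and collision-antipodal off-circle clusters uniformly. Your projection approach, by contrast, must control the span of the collapse directions (which need not be a fixed $2$-dimensional subspace, and which also receives singular contributions from the antipodal partner since $\sin d_{ij}\to0$ there as well), and you give no way to rule out the alignment degeneracy you yourself flag.

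A secondary problem sits in your first step. You pair the $\q_p$-equation with a \emph{different} particle $\q_q$ of the cluster; the $j=q$ summand indeed gives $m_q/\sin d_{pq}\to+\infty$, but for a cluster with three or more particles the remaining internal summands are of the form $m_j(\cos d_{jq}-\cos d_{pj}\cos d_{pq})/\sin^3 d_{pj}$, which are of the same order $1/d$ with \emph{indeterminate sign} (e.g.\ when $p$ lies between $q$ and $j$), so the left side is not manifestly unbounded. The paper's Proposition \ref{pro:h2cccomp_lam}, which you cite as your model, avoids this by pairing the equation of the \emph{extremal} particle of the cluster with itself, which forces every internal term to the same sign. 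You should adopt that choice rather than the $(\q_p,\q_q)$ pairing.
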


   We now consider the geodesic configurations. 
   Let us introduce some \textbf{notations}.  A \textit{geodesic   central configuration} is one  for which all particles 
   lie  on a same  geodesic. A \textit{2-dimensional   central configuration} is one  for which all particles 
   lie  on a same  2-dimensional  great sphere  but not on a same geodesic. Denote by $\S_{xyz}^2$ (resp. $\S_{xzw}^2$) the 2-dimensional  great sphere
   intersected by $\S^3$ and the hyperplane $w=0$ (resp. $y=0$). Denote by   $\H_{xyw}^2$ the intersection of $\H^3$ and  the hyperplane $z=0$. Let  $\S_{xz}^1:=\{(x,y,z,w) \in \S^3: y=w=0\}$, $\H_{xw}^1:=\{(x,y,z,w) \in \H^3: y=z=0\}$. We have  some  related preliminary results.  
   
     \begin{theorem}[\cite{DSZ18}]\label{thm:mc}
   	Let $\q=(\q_1,\dots,\q_n),\ \q_i=(x_i,y_i,z_i,w_i),\ i=1,...,n,$ be an ordinary central configuration in $\H^3$  $(\S^3)$.
   	Then we have the relationships
   	\begin{equation*}
   	\sum_{i=1}^n m_ix_iz_i=\sum_{i=1}^n m_ix_iw_i=\sum_{i=1}^n m_iy_iz_i=\sum_{i=1}^n m_iy_iw_i=0.
   	\end{equation*}
   \end{theorem}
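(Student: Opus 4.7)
My plan is to exploit the symmetry gap between $U_{\pm 1}$ and $I_{\pm 1}$. The potential $U_{\pm 1}$ is invariant under the full isometry group $G$ (which is $O(4)$ for $\S^3$ and $O(3,1)$ for $\H^3$), whereas $I_{\pm 1}(\q)=\sum_i m_i(x_i^2+y_i^2)$ is invariant only under the block subgroup $O(2)\times O(2)$ (resp.\ $O(2)\times O(1,1)$) preserving the $(x,y)$- and $(z,w)$-blocks separately. The four identities to be proved are precisely what the ``extra'' directions in $\mathfrak{g}$ outside this block subalgebra detect.

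Since $\q$ is ordinary, it is a critical point of $U_{\pm 1}-\lambda I_{\pm 1}$ on $(\S^3)^n$ or $(\H^3)^n$ with $\lambda\ne 0$ (if $\lambda=0$, then $\q$ would be a critical point of $U_{\pm 1}$ alone, i.e.\ special). For any $\xi\in\mathfrak{g}$ the curve $t\mapsto\exp(t\xi)\q$ stays on the constraint manifold, so $v:=(\xi\q_1,\dots,\xi\q_n)$ is tangent there. The $G$-invariance of $U_{\pm 1}$ gives $\langle dU_{\pm 1},v\rangle=0$, and the critical-point relation $\langle d(U_{\pm 1}-\lambda I_{\pm 1}),v\rangle=0$ then collapses to
\[
\lambda\,\frac{d}{dt}\bigg|_{t=0} I_{\pm 1}\bigl(\exp(t\xi)\q\bigr)=0.
\]
Dividing by the nonzero $\lambda$ leaves the scalar identity $\frac{d}{dt}|_{t=0}I_{\pm 1}(\exp(t\xi)\q)=0$, available for every $\xi\in\mathfrak{g}$.

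Apply this with $\xi$ ranging over the four generators of $\mathfrak{g}$ that mix the two blocks: the rotations in the $xz$- and $yz$-planes (spacelike--spacelike in both geometries), together with the $xw$- and $yw$-plane generators, which are rotations in $\mathfrak{o}(4)$ but hyperbolic boosts in $\mathfrak{o}(3,1)$. For instance the $xz$-rotation acts as $\exp(t\xi)\q_i=(x_i\cos t-z_i\sin t,\,y_i,\,x_i\sin t+z_i\cos t,\,w_i)$, and
\[
\frac{d}{dt}\bigg|_{t=0}\sum_i m_i\bigl[(x_i\cos t-z_i\sin t)^2+y_i^2\bigr]=-2\sum_i m_ix_iz_i,
\]
giving $\sum m_ix_iz_i=0$. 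The $yz$-rotation yields $\sum m_iy_iz_i=0$ in the same way. In the $\S^3$ case the $xw$- and $yw$-rotations yield the remaining two identities with the same minus sign; in the $\H^3$ case the boost $(x_i,w_i)\mapsto(x_i\cosh t+w_i\sinh t,\,x_i\sinh t+w_i\cosh t)$ contributes $+2\sum m_ix_iw_i$ instead, with the sign irrelevant for the conclusion.

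There is essentially no obstacle once the right framework is chosen: the entire proof is one application of $G$-invariance followed by four one-line derivatives. Both manifolds are handled uniformly because $I_{\pm 1}$ depends only on the spacelike coordinates $x,y$, so the Minkowski sign on the $(z,w)$-block never enters the computation. The hypothesis that $\q$ be \emph{ordinary} (rather than merely central) is used exactly once, to cancel $\lambda$ from the reduced equation; for special configurations the identities need not hold.
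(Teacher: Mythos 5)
Your proof is correct. Note that the paper itself does not prove Theorem \ref{thm:mc} --- it is imported from \cite{DSZ18} without proof --- so there is no in-paper argument to compare against; but your derivation (pair the critical-point equation with the generators of $\mathfrak{o}(4)$ resp.\ $\mathfrak{o}(3,1)$ that mix the $(x,y)$- and $(z,w)$-blocks, use invariance of $U_{\pm1}$ to kill its contribution, and cancel the nonzero multiplier $\lambda$, which is nonzero precisely because the configuration is \emph{ordinary}) is the standard symmetry argument and is exactly the mechanism behind the center-of-mass analogy the author draws in Remark \ref{rem:mc}. The four one-line derivative computations and the sign discussion for the hyperbolic boosts are all accurate.
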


   \begin{remark}\label{rem:mc}
   	The above relationships have  been found  in \cite{GMPR16,Kil98} for two-body ordinary central configurations, where it reads as $m_1\sin 2\th_1=m_2\sin 2\th_2$ or $m_1\sinh 2\th_1=m_2\sinh 2\th_2$. 
   	Recall  that  configurations of relative equilibrium in $\R^2$ have center of mass at the origin, i.e., $\sum_{i=1}^n m_ix_i=\sum_{i=1}^n m_iy_i=0.$ Theorem \ref{thm:mc} can be viewed as an analogy of that fact. 
   \end{remark}

  	\begin{theorem}[\cite{DSZ18}] \label{thm:s2cc}
  	In $\S^3$, each geodesic (resp. 2-dimensional ) ordinary central configuration 
  	 is equivalent to  one  on $\S^1_{xz}$ (resp. $\S_{xyz}^2$ or $\S_{xzw}^2$). 
  	Any  ordinary central configuration  with multiplier $\lambda$  is  mapped to one  ordinary central configuration with multiplier $-\lambda$  by $\tau\in O(4)$, where $\tau (x, y, z,w)=(z, w, x, y)$. 
  \end{theorem}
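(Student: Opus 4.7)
The plan is to combine the conclusion of Theorem \ref{thm:mc} with the $SO(2)\times SO(2)$ gauge freedom. Introduce the weighted Gram matrix $G:=\sum_{i=1}^n m_i\q_i\q_i^{T}\in\R^{4\times 4}$. Theorem \ref{thm:mc} says precisely that the four ``mixed'' entries $G_{13},G_{14},G_{23},G_{24}$ vanish, so $G=\diag(A,B)$ with symmetric positive semi-definite $2\times 2$ blocks $A$ on the $xy$-block and $B$ on the $zw$-block. Since $m_i>0$, the rank of $G$ coincides with the rank of the $n\times 4$ matrix $Q$ whose rows are the $\q_i$.

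For a geodesic configuration the rows of $Q$ span a $2$-plane in $\R^4$, so $\rank(Q)\le 2$; rank $1$ is excluded because it would force each $\q_i=\pm e$ for a fixed unit $e$, contradicting $\q\notin\Delta_+$. Hence $\rank(A)+\rank(B)=2$. The main delicate step is to rule out the extremes $B=0$ and $A=0$, which would place every particle on $\S^1_{xy}$ or $\S^1_{zw}$ respectively. For these one observes that at $\q_i\in\S^1_{xy}$ the vector $\nabla_{\q_i}I_1=(2m_ix_i,2m_iy_i,0,0)=2m_i\q_i$ is radial, hence orthogonal to $T_{\q_i}\S^3$; consequently the tangential gradient of $\lambda I_1$ vanishes identically, so $\q$ is critical for $U_1-\lambda I_1$ iff it is critical for $U_1$, making it a special rather than ordinary configuration. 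The remaining possibility $\rank(A)=\rank(B)=1$ is then immediate: choose $R_1,R_2\in SO(2)$ diagonalizing $A$ and $B$; after applying $(R_1,R_2)$ every $y_i$ and every $w_i$ vanishes and the representative lies on $\S^1_{xz}$.

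For a $2$-dimensional configuration $\rank(Q)=3$ (rank $\le 2$ would force it onto a geodesic), so $\{\rank(A),\rank(B)\}=\{2,1\}$. Diagonalizing the rank-one block by the appropriate $SO(2)$ factor annihilates either every $w_i$ or every $y_i$, yielding a representative on $\S^2_{xyz}$ or $\S^2_{xzw}$.

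The second claim reduces to a direct computation: $\tau$ is an orthogonal involution of $\R^4$, so every $\q_i\cdot\q_j$ and hence $U_1$ is $\tau$-invariant; on the other hand $I_1(\tau\q)=\sum_i m_i(z_i^2+w_i^2)=\sum_i m_i-I_1(\q)$. Therefore
\[
U_1(\tau\q)-\lambda I_1(\tau\q)=U_1(\q)-(-\lambda)I_1(\q)-\lambda\sum_i m_i,
\]
which agrees with $U_1-(-\lambda)I_1$ up to an additive constant. Since $\tau$ is a diffeomorphism of $(\S^3)^n\setminus\Delta_+$, critical points correspond with multiplier flipped in sign; and $\tau\q$ is critical for $U_1$ iff $\q$ is, so ``ordinary'' is preserved.
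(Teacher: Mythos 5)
The paper does not actually prove this theorem --- it is imported verbatim from \cite{DSZ18} --- so there is no internal proof to measure your argument against; judged on its own, your derivation is correct and uses only ingredients the paper supplies. The block-diagonalization of $G=\sum_i m_i\q_i\q_i^{T}$ is exactly the content of Theorem \ref{thm:mc}; the rank bookkeeping $\rank(A)+\rank(B)=\rank(Q)$ (with $M=\diag(m_1,\dots,m_n)$ positive definite) is right, rank $1$ is correctly excluded via $\D_+$, and the exclusion of the cases $(2,0)$ and $(0,2)$ --- a configuration lying entirely on $\S^1_{xy}$ or $\S^1_{zw}$ has vanishing tangential gradient of $I_1$, hence is critical for $U_1-\lambda I_1$ iff it is critical for $U_1$, i.e.\ special --- is precisely the phenomenon the paper records in Section 2.3. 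The rotations you apply lie in $SO(2)\times SO(2)$, which is exactly the equivalence the paper uses, and the computation $I_1(\tau\q)=\sum_i m_i-I_1(\q)$ combined with the $O(4)$-invariance of $U_1$ correctly flips the sign of the multiplier while preserving ordinariness. One cosmetic remark: in the $A=0$ case the gradient $\nabla_{\q_i}I_1$ vanishes identically rather than being radial, but the tangential component is zero either way, so the conclusion stands.
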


    \begin{theorem}[\cite{DSZ18, ZZ16}] \label{thm:h2cc}
    	In $\H^3$, each  ordinary central configuration is  equivalent to  one on $\H^2_{xyw}$.  Each geodesic  ordinary central configuration  is equivalent to  one on $\H^1_{xw}$.
    \end{theorem}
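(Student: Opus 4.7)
The plan is to show that every ordinary central configuration in $\H^3$ already lies on some 2-dim totally geodesic submanifold of $\H^3$, after which an element of $O(3,1)$ can carry that submanifold onto $\H^2_{xyw}$.

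First I would write out the critical-point equations for $U_{-1} - \lambda I_{-1}$ subject to the constraints $\q_i \cdot \q_i = -1$, introducing a Lagrange multiplier $\mu_i$ for each constraint. Since $I_{-1}$ depends only on the $x$- and $y$-coordinates, the $x$- and $y$-equations absorb $\lambda$, while the $z$- and $w$-equations have the identical shape
\[
\sum_{j\ne i}\frac{m_j\, u_j}{\sinh^3 d_{ij}} \;=\; \frac{\mu_i}{m_i}\,u_i, \qquad u\in\{z,\,w\},
\]
so the $n$-vectors $(z_1,\dots,z_n)$ and $(w_1,\dots,w_n)$ solve the \emph{same} homogeneous linear system.

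Next I would force these two vectors to be proportional by a Perron--Frobenius argument. Reading the $w$-equation as $\mu_i w_i = \sum_{j\ne i} m_i m_j w_j / \sinh^3 d_{ij}$ and using $w_i \ge 1$, the right-hand side is a strictly positive sum, so $\mu_i > 0$ for every $i$. After the rescaling $u_i \mapsto \sqrt{m_i}\,u_i$ the system becomes $\hat K \vec u = D' \vec u$, with $\hat K_{ij} = \sqrt{m_i m_j}/\sinh^3 d_{ij}$ for $i\ne j$ (symmetric, nonnegative, zero diagonal) and $D'$ a positive diagonal matrix. Then $(D')^{-1}\hat K$ is nonnegative and irreducible (off-diagonal entries are strictly positive since no two particles coincide), and it admits the strictly positive eigenvector associated to $w$ at eigenvalue $1$. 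Perron--Frobenius then forces this eigenvalue to be simple and its eigenvector unique up to scaling, so the eigenvector associated to $z$ must be a scalar multiple, yielding $z_i = c\, w_i$ for all $i$.

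Finally, the hyperbolic constraint $w_i^2 - z_i^2 \ge 1$ forces $|c|<1$, so all particles lie on the 2-dim hyperbolic plane $\{z - cw = 0\}\cap\H^3$, and a suitable hyperbolic rotation in $O(3,1)$ sends this plane onto $\H^2_{xyw}$. For the geodesic statement, once the configuration is placed on $\H^2_{xyw}$ the particles lie on some geodesic of $\H^2_{xyw}$, and a further $SO(2)$ rotation in the $xy$-plane aligns that geodesic with $\H^1_{xw}$. The main obstacle is the Perron--Frobenius step, which crucially uses the hyperbolic constraint $w_i \ge 1$ to guarantee $\mu_i > 0$; this is why the analogous statement fails in $\S^3$, where the sign of $w$ is unconstrained and genuinely 3-dim ordinary central configurations do exist (cf.\ Theorem~\ref{thm:s2cc}).
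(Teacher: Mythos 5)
The paper does not actually prove this theorem; it imports it from \cite{DSZ18, ZZ16}, so I am judging your argument on its own terms. Your proof of the first assertion is correct. The Lagrange-multiplier form of the critical-point equations does make $(z_1,\dots,z_n)$ and $(w_1,\dots,w_n)$ solutions of the same system $\sum_{j\ne i} m_im_j u_j/\sinh^3 d_{ij} = 2\mu_i u_i$ (the sign flips from the Minkowski form cancel between $\nabla_{\q_i}U_{-1}$ and $\nabla_{\q_i}(\q_i\cdot\q_i)$); the constraint $w_i\ge 1$ gives $\mu_i>0$; the off-diagonal entries are strictly positive off $\D_-$, so Perron--Frobenius identifies the eigenvalue $1$, which possesses the strictly positive eigenvector $(w_i)$, as the simple Perron root, whence $z_i=cw_i$ and $|c|<1$ from $w_i^2-z_i^2\ge 1$. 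One point to make explicit: the isometry carrying $\{z=cw\}$ onto $\{z=0\}$ must be the boost with $\tanh s=-c$ acting only on the $(z,w)$-coordinates, i.e.\ an element of $SO^+(1,1)$ in the sense of Section 2.4; this is what preserves $I_{-1}$ and hence the ordinary-central-configuration property, which is the content of ``equivalent'' here. A generic element of $O(3,1)$ mapping the plane to $\{z=0\}$ would not do.

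The second assertion is where you have a genuine gap. The only elements of the structure-preserving group $SO(2)\times SO^+(1,1)$ that fix $\H^2_{xyw}$ are the $xy$-rotations, and these carry $\H^1_{xw}$ only onto the geodesics of $\H^2_{xyw}$ passing through $(0,0,0,1)$ --- a one-parameter family, whereas the geodesics of $\H^2_{xyw}$ form a two-parameter family. A geodesic such as $\{x=w\tanh t,\ z=0\}$ with $t\ne 0$ is not in the $SO(2)$-orbit of $\H^1_{xw}$, so your final sentence is false for a general geodesic of $\H^2_{xyw}$. You must first show that the supporting geodesic passes through $(0,0,0,1)$, and this needs the central configuration equations again. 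The quickest route is Theorem \ref{thm:mc}: after an $xy$-rotation the geodesic is $\{x=w\tanh t,\ z=0\}$, and $0=\sum m_ix_iw_i=\tanh t\sum m_iw_i^2$ forces $t=0$, after which a quarter-turn in the $xy$-plane gives $\H^1_{xw}$. Alternatively, your own linear systems do the job: comparing the $x$-equation $\sum_{j\ne i}m_im_jx_j/\sinh^3 d_{ij}=(2\mu_i+2\lambda m_i)x_i$ with the $w$-equation, proportionality $x_i=w_i\tanh t$ with $t\ne0$ forces $2\lambda m_iw_i=0$, hence $\lambda=0$, contradicting the configuration being ordinary. With that supplement the reduction is complete.
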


 Thus, for geodesic (resp. 2-dimensional ) ordinary central configurations,  it is enough to  study the ones on $\S_{xz}^1$  and $\H_{xw}^1$ (resp.  $\S_{xyz}^2$  and $\H_{xyw}^2$). 
  We will call the those special sub manifolds $\S^1$  and $\H^1$    
 (resp.   $\S^2$ and $\H^2$). 
    
 The   ordinary central configurations  on $\S^2$  (resp. $\H^2$) are critical points of $U_1(\q)$ (resp. $U_{-1}(\q)$) restricted  on $S^+_c\cap(\S^2)^n$ (resp. $S^-_c\cap(\S^2)^n$). With a slight abuse of notation, we still call them $S^+_c$ and   $S^-_c$.
   Further more,  the classes of  ordinary central configurations  in  $I^{-1}_1(c)$ (resp. $I^{-1}_{-1}(c)$) 
    correspond in $1$-$1$  manner to the critical points of   $U_1(\q)$ (resp. $U_{-1}(\q)$) restricted on $S^+_c/S^1$ (resp. $S^-_c/S^1$).  Recall that for the   Newtonian $n$-body problem, the classes  of  configurations of relative equilibrium correspond in $1$-$1$ manner to the critical points of   $U_0(\q)$  restricted on $\mathcal{S}/S^1$, where 
    \[  \mathcal{S} =   \{ \q\in(\R^2)^n-\Delta\ \! | \ \! I_0(\q)=1\}.   \]

    For the $\H^2$ case, the set $S^-_c/S^1$ $(c>0)$ is obviously diffeomorphic  to $\mathcal{S}/S^1$.  Thus, $S^-_c/S^1$ is smooth $(2n-2)$-dimensional manifold. 
 
    \begin{theorem} \label{thm:cc_h1&ind}
    	Given $n$ masses  on  $\H^1$ and any positive value of $c$,  there are exactly $n!/2$ 
    	geodesic  ordinary central configurations  in $S^-_c$, one for each ordering of  the masses along $\H^1$.  At each of them, the  Hessian of $U_{-1}|_{S^-_c/S^1}$  has inertia  $ (n_0, n_+, n_-)= (0, n, n-2)$.
    \end{theorem}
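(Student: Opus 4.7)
The plan is to split the proof into a Moulton-type existence/uniqueness statement on $\H^1$ and a Morse index computation transverse to $\H^1$ inside $\H^2_{xyw}$. For the first part I would parametrize $\H^1$ by arc length, so that $\q_i=(\sinh\theta_i,0,0,\cosh\theta_i)$ and $d_{ij}=|\theta_i-\theta_j|$, fix a permutation $\sigma$, and work in the open chamber $C_\sigma=\{\theta_{\sigma(1)}<\cdots<\theta_{\sigma(n)}\}\subset\R^n$. The slice $\{I_{-1}=c\}\cap C_\sigma$ is relatively compact in $\R^n$ and $U_{-1}\to+\infty$ at its relative boundary (particle collisions), so the restriction attains an interior minimum. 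Uniqueness within $C_\sigma$ would follow from strict convexity of $U_{-1}-\lambda I_{-1}$ along $C_\sigma$, proved in the spirit of Moulton's original argument for the Newtonian collinear problem; this $\theta$-block positivity also feeds directly into the index computation. The involution $\sigma\leftrightarrow\sigma^{\mr{rev}}$ is realized by the rotation by $\pi$ in the $(x,y)$-plane, which lies in $SO(2)$ and is therefore quotiented out, so only $n!/2$ classes survive.

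For the Morse index I would introduce Fermi normal coordinates $(\theta_i,y_i)$ along $\H^1$ inside $\H^2_{xyw}$, namely $\q_i=(\cosh y_i\sinh\theta_i,\sinh y_i,0,\cosh y_i\cosh\theta_i)$, so that the critical point sits at $y=0$ and $\cosh d_{ij}=\cosh y_i\cosh y_j\cosh(\theta_i-\theta_j)-\sinh y_i\sinh y_j$. A direct check shows every mixed second derivative $\partial_{\theta_m}\partial_{y_k}$ of both $U_{-1}$ and $I_{-1}$ vanishes at $y=0$, so the Hessian of $U_{-1}-\lambda I_{-1}$ decouples into a $\theta$-block and a $y$-block. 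The $(n-1)$-dimensional $\theta$-block is positive definite by the first part. Since the tangent space to $S^-_c/S^1$ has dimension $(n-1)+(n-1)=2n-2$ and the $SO(2)$-orbit direction lies purely in the $y$-coordinates, everything reduces to analyzing the $y$-block
\[
(\tilde H_y)_{kl}=\frac{m_km_l}{\sinh^3 d_{kl}}\ (k\ne l),\qquad (\tilde H_y)_{kk}=-\sum_{j\ne k}\frac{m_km_j\cosh d_{kj}}{\sinh^3 d_{kj}}-2\lambda m_k\cosh^2\theta_k
\]
on the $(n-1)$-dimensional quotient of $\R^n$ by the $SO(2)$-orbit direction, with $\lambda=-\tfrac12(\alpha^2+\beta^2)<0$.

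The plan for the $y$-block is to locate two distinguished directions. The $SO(2)$-generator at $y=0$ is $(\dot\theta_i,\dot y_i)=(0,\sinh\theta_i)$, which forces $(\sinh\theta_i)\in\ker\tilde H_y$; this can be verified directly from the Moulton equation $\sum_{j\ne k}\frac{m_km_j\sinh(\theta_k-\theta_j)}{\sinh^3 d_{kj}}=-2\lambda m_k\sinh\theta_k\cosh\theta_k$ obtained in the first part. Next, the boost in the $(y,w)$-plane is an isometry of $\H^3$ preserving $U_{-1}$ but not $I_{-1}$; computing $d^2(U_{-1}-\lambda I_{-1})/ds^2=-2\lambda\sum m_k\cosh^2\theta_k$ along its orbit and matching against the Hessian yields $\tilde H_y(\cosh\theta_i)=-2\lambda M(\cosh\theta_i)$ with $M=\diag(m_k)$, contributing one positive generalized eigenvalue since $\lambda<0$. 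Together with the $n-1$ positives from the $\theta$-block this gives $n_+\ge n$, and the full claim $n_+=n$, $n_-=n-2$, $n_0=0$ will follow once $\tilde H_y$ is shown to be strictly negative definite on the $(n-2)$-dimensional $M$-orthogonal complement of $\{(\sinh\theta_i),(\cosh\theta_i)\}$.

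The last point is the step I expect to be the main obstacle. Writing $\tilde H_y=-L_c+D$ with
\[
(L_c)_{kl}=-\frac{m_km_l}{\sinh^3 d_{kl}}\ (k\ne l),\qquad (L_c)_{kk}=\sum_{j\ne k}\frac{m_km_j\cosh d_{kj}}{\sinh^3 d_{kj}},\qquad D=\diag(-2\lambda m_k\cosh^2\theta_k),
\]
the $2\times 2$-block identity $\cosh d\,(v_k^2+v_l^2)-2v_kv_l=2\sinh^2(d/2)(v_k+v_l)^2+2\cosh^2(d/2)(v_k-v_l)^2$ shows that $L_c$ is strictly positive definite, but $D$ is also positive definite and the two contributions to $\tilde H_y$ have opposite sign, so negative definiteness on the $(n-2)$-dim complement is not automatic. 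I would adapt Pacella's argument for the Newtonian collinear case: use the two orthogonality conditions $\sum m_k\sinh\theta_k\dot y_k=0$ and $\sum m_k\cosh\theta_k\dot y_k=0$ together with the critical-point equations to dominate the positive $D$-contribution by the $L_c$-contribution, the extra factor $\cosh d_{kj}>1$ in $L_c$ supplying in the hyperbolic setting the strict inequality that the Newtonian proof gets for free by modding out translations via the center of mass.
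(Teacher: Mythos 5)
Your overall architecture coincides with the paper's: a Moulton--Smale count on the $n!$ chambers of $(\H^1)^n$, a block-diagonal splitting of the Hessian of $U_{-1}-\bar\lambda I_{-1}$ into a longitudinal block (positive definite, giving inertia $(0,n-1,0)$ on the constrained tangent space) and a transverse block, and the identification of the two distinguished transverse directions $(\sinh\bar\th_i)$ (the $SO(2)$-orbit direction, in the kernel) and $(\cosh\bar\th_i)$ (a positive direction). Your Fermi coordinates differ from the paper's $(\th,\vp)$-coordinates only by the diagonal congruence $\diag(1/\cosh\bar\th_i)$ on the transverse block; I checked that your $\tilde H_y$ and the paper's $\mathcal H_2$ agree under this change of basis once the central configuration equations \eqref{equ:reeh1} are used, so up to that point the two arguments are the same.

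The genuine gap is exactly where you flag it: you never prove that $\tilde H_y$ is negative definite on the $(n-2)$-dimensional complement of $\{(\sinh\bar\th_i),(\cosh\bar\th_i)\}$, and ``adapt Pacella's argument to dominate the positive $D$-contribution by the $L_c$-contribution'' is a hope, not a proof. This is the heart of the theorem and is where the paper spends most of its effort. The paper's resolution (following Conley) is an invariant-cone argument: after a congruence reducing $\mathcal H_2$ to $A-2\bar\lambda$, with $A$ symmetric for the mass inner product, $A\c_1=0$ for $\c_1=(\cosh\bar\th_i)$ and $A\c_2=2\bar\lambda\c_2$ for $\c_2=(\sinh\bar\th_i)$, one shows that the polyhedral cone $K=\{\u:\sum m_i\cosh\bar\th_i u_i=0,\ u_1/\cosh\bar\th_1\le\cdots\le u_n/\cosh\bar\th_n\}$ is carried strictly into itself by the flow of $\dot\u=A\u$, which forces every eigenvalue of $A$ other than $0$ and $2\bar\lambda$ to be strictly below $2\bar\lambda$. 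The strict inwardness on $\partial K$ rests on the monotonicity inequalities of Proposition \ref{prop:dis_ine_h1}, e.g.\ $\sinh^3(\th_j-\th_k)\cosh\th_j>\sinh^3(\th_i-\th_k)\cosh\th_i$ for $k<i<j$ --- precisely the hyperbolic substitute for the inequality you say the Newtonian proof ``gets for free.'' Without this step or an equivalent, you have only $n_+\ge n$ and $n_0\ge 1$, not the stated inertia. A smaller inaccuracy: differentiating $U_{-1}-\lambda I_{-1}$ twice along the $(y,w)$-boost orbit yields only the scalar $\c_1^{T}\tilde H_y\c_1=-2\lambda\sum m_k\cosh^2\bar\th_k$, not the eigenvector relation $\tilde H_y\c_1=-2\lambda M\c_1$, because the boost does not preserve $I_{-1}$ and its orbit is not a curve of critical points; the relation is nevertheless correct, but must be verified directly from \eqref{equ:reeh1}, as the paper does for $A\c_1=0$.
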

    

  For the $\S^2$ case, the set $S_c^+/S^1$ is more complicated. Assume that $m_1$ is the smallest mass. 
  
  \begin{proposition}
 For given $n$ masses, the critical values of the function   $I_1(\q)$ are $\{\sum \epsilon_i m_i | \epsilon_i =0 \ {\rm or } \  1\}$. Assume that $m_1$ is the smallest mass. 
  	If  $c< m_1$, then $S^+_c$ is a smooth $(2n-2)$-dimensional manifold  with  $2^n$ components.  
  \end{proposition}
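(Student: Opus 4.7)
The plan is to first identify the critical points of $I_1$ on the compact manifold $(\S^2)^n$ (with $\S^2 := \S^2_{xyz}$), then invoke the regular-value theorem, and finally cut out $\Delta_+$ at the end. Because $I_1(\q)=\sum_i m_i(x_i^2+y_i^2)$ decouples as a sum of functions each depending on a single $\q_i$, and the constraints $\q_i\cdot\q_i=1$ are independent, the critical points of $I_1$ on $(\S^2)^n$ are exactly products of critical points of $f_i(\q_i):=x_i^2+y_i^2$ on $\S^2_{xyz}$. A short Lagrange-multiplier computation gives $(x_i,y_i,0)\parallel(x_i,y_i,z_i)$, so at each $\q_i$ either $z_i=0$ (with $f_i=1$) or $x_i=y_i=0$ (with $f_i=0$). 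Thus the critical values of $I_1$ are exactly $\{\sum_i\epsilon_i m_i : \epsilon_i\in\{0,1\}\}$, settling the first claim.

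For the smoothness and component count when $0<c<m_1$, I would first note that $c$ is not among the critical values, since every nonempty sum $\sum\epsilon_i m_i$ is at least $m_1>c$. The regular value theorem then gives that $I_1^{-1}(c)\cap(\S^2)^n$ is a smooth submanifold of codimension one. To count components, I observe that on this level set every particle satisfies $m_i(1-z_i^2)\le c<m_i$, so $z_i\neq 0$; the continuous sign map $\q\mapsto(\operatorname{sgn} z_1,\dots,\operatorname{sgn} z_n)$ therefore partitions the level set into at least $2^n$ pieces. To show each piece is connected, I fix a sign pattern $\sigma\in\{\pm 1\}^n$ and parametrize its stratum by $(x_i,y_i)\in\R^2$ with $z_i=\sigma_i\sqrt{1-x_i^2-y_i^2}$; the constraint $I_1=c$ becomes the ellipsoid $\{\sum m_i(x_i^2+y_i^2)=c\}\subset\R^{2n}$, and the polydisk bound $x_i^2+y_i^2<1$ is automatic since $m_i(x_i^2+y_i^2)\le c<m_i$. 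Each stratum is therefore diffeomorphic to a standard sphere, and in particular connected.

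It remains to delete $\Delta_+$ without changing the component count. Inside a given stratum, the coincidence locus $\{\q_i=\q_j\}$ (possible only when $\sigma_i=\sigma_j$) and the antipodal locus $\{\q_i=-\q_j\}$ (possible only when $\sigma_i=-\sigma_j$) each cut out a smooth codimension-two subset of the ellipsoid, so for $n\ge 2$ removing them does not disconnect the stratum, and for $n=1$ there is nothing to remove. The main place where the hypothesis $c<m_1$ does real work is in the ellipsoid step: it is exactly what forces the entire ellipsoid in each parametrization chart to lie in the open polydisk, so no sign-stratum gets split further along an equator $\{z_i=0\}$, and the total count is precisely $2^n$.
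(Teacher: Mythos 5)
Your proof is correct and follows essentially the same route as the paper: identify the critical points of $x^2+y^2$ on $\S^2$ as the poles and the equator, read off the critical values of $I_1$ as the subset sums of the masses, and use $c<m_1$ to force each $z_i$ to have a definite sign, yielding $2^n$ components. You additionally supply the connectedness of each sign stratum (via the ellipsoid parametrization) and the check that deleting the codimension-two set $\Delta_+$ does not split any component, both of which the paper leaves implicit.
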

\begin{proof}
Let $f(x,y,z)=x^2+y^2,  \  (x,y,z)\in \S^2$. Obviously, the critical points of $f$  consist of   the equator and the  two poles. 
	Thus, the critical points of $I_1(\q)$ are 
	\[  \{  \q\in (\S^2)^n|  \q_i =(0, 0, \pm 1) \ {\rm or } 
	\ (\cos \vp, \sin \vp, 0), \  i=1, ..., n.  \},    \]
	 which  gives the set of  critical values. If c is less than the first critical value $m_1$, no particle of configurations in $S^+_c$ can lie on the equator. Then each $z_i$ must  be either positive or negative, which implies that there are $2^n$ components of $S^+_c$.   
	\end{proof}

We will only consider  geodesic ordinary central configurations 
on the following   component of $S^+_c$,  
\[  \mathcal{M}_c =\{  \q\in S^+_c|   z_i >0, 1=1, ..., n \},  \ 0<c< m_1.\]  
Note that $\mathcal{M}_c/S^1$  is  diffeomorphic  to $\mathcal{S}/S^1$.  The multiplier of ordinary central configurations on $\mathcal M_c$ must be negative, see  Proposition \ref{pro:s2lam}.
 
 \begin{theorem} \label{thm:cc_s1&ind}
 	Given $n$ masses  on  $\S^1$ and any value of $c\in (0, \frac{m_1}{2})$,  there are exactly $n!/2$ 
 	geodesic  ordinary central configurations  on $\mathcal{M}_c$, one for each ordering of  the masses.  Provided that $0<c<\frac{m_1}{4}$,  at each of them, the  Hessian of $U_{1}|_{\mathcal M_c/S^1}$ has inertia  $ (n_0, n_+, n_-)= (0, n, n-2)$.
 \end{theorem}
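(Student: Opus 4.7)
The plan is to adapt the strategy of Theorem \ref{thm:cc_h1&ind} to the spherical setting, where the hypothesis $c<m_1/2$ (resp.\ $c<m_1/4$) restricts the angular variables to the region in which $\cot$ is convex. I would first reduce to configurations on $\S^1_{xz}$ by Theorem \ref{thm:s2cc} and exploit the reflection isometry $(x,y,z)\mapsto(x,-y,z)$, which pointwise fixes $\S^1_{xz}$ and preserves $U_1-\lambda I_1$, to invoke the symmetric criticality principle: a configuration on $\S^1_{xz}$ that is critical within the geodesic is automatically critical on $\mathcal{M}_c$. Parameterising $\q_i=(\cos\theta_i,0,\sin\theta_i)$ with $\theta_i\in(0,\pi)$ (so that $z_i>0$), the problem becomes to count critical points of $U_1=\sum_{i<j}m_im_j\cot|\theta_j-\theta_i|$ on $\{\sum_i m_i\cos^2\theta_i=c\}$, modulo the only nontrivial $S^1$-action on $\S^1_{xz}$, namely the involution $\theta_i\mapsto\pi-\theta_i$ induced by the half-turn about the $z$-axis.

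The hypothesis $c<m_1/2$ forces $m_i\cos^2\theta_i\le c<m_i/2$, hence $\theta_i\in(\pi/4,3\pi/4)$ and all pairwise distances $d_{ij}<\pi/2$. In each ordering cell a direct computation shows $\cot(\theta_j-\theta_i)$ is convex as a function of the pair (the $2\times2$ Hessian has eigenvalues $0$ and $4\cos d_{ij}/\sin^3 d_{ij}>0$), and $\cos^2\theta$ has second derivative $-2\cos 2\theta>0$ on $(\pi/4,3\pi/4)$, so both $U_1$ and $I_1$ are convex on the cell. By Proposition \ref{pro:s2lam} any OCC on $\mathcal{M}_c$ has $\lambda<0$, so $U_1-\lambda I_1$ is strictly convex on the cell; the constraint $I_1=c<m_1$ keeps each $\theta_i$ in a compact subinterval of $(\pi/4,3\pi/4)$ and $U_1\to+\infty$ at collisions, so a unique minimiser exists per ordering. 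The $n!$ orderings are paired by the involution $\theta_i\mapsto\pi-\theta_i$, which reverses the order, yielding $n!/2$ classes.

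For the Morse index I introduce the perpendicular coordinates $\q_i=(\cos\theta_i\cos\phi_i,\sin\phi_i,\sin\theta_i\cos\phi_i)$ so that $\phi_i=0$ on the geodesic, and exploit the reflection $\phi_i\mapsto-\phi_i$, which preserves $U_1-\lambda I_1$. This forces the Hessian at the critical point to block-decouple into a $\theta\theta$-block and a $\phi\phi$-block; moreover $dI_1$ vanishes on the $\phi$-directions at $\phi=0$ and the $S^1$-generator is $(\cos\theta_1,\dots,\cos\theta_n)$ in the $\phi$-direction. The $\theta\theta$-block restricted to $\ker(dI_1)$ (dimension $n-1$) is positive definite by the strict convexity above. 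The $\phi\phi$-block is the symmetric matrix
\begin{equation*}
A_{kk}=-\sum_{l\ne k}\frac{m_km_l\cos d_{kl}}{\sin^3 d_{kl}}-2\lambda m_k\sin^2\theta_k,\qquad A_{kl}=\frac{m_km_l}{\sin^3 d_{kl}}\ \ (k\ne l),
\end{equation*}
and using the critical-point identity $\sum_{l\ne k}m_l\sin(\theta_l-\theta_k)/\sin^3 d_{kl}=-\lambda\sin 2\theta_k$ together with $\cos\theta_l-\cos\theta_k\cos d_{kl}=-\sin\theta_k\sin(\theta_l-\theta_k)$, one verifies directly that $\v=(\cos\theta_1,\dots,\cos\theta_n)\in\ker A$, reflecting $S^1$-invariance.

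The main obstacle is showing that $A$ has inertia $(1,1,n-2)$. Testing against $\w=(1,\dots,1)$ gives
\begin{equation*}
\w^\top A\w=2\sum_{k<l}\frac{m_km_l(1-\cos d_{kl})}{\sin^3 d_{kl}}+(-2\lambda)\sum_k m_k\sin^2\theta_k>0,
\end{equation*}
producing the single positive direction. The harder task is to locate $n-2$ negative directions, and this is where the stronger hypothesis $c<m_1/4$ enters: it implies $\cos\theta_i<1/2$, hence $d_{kl}<\pi/3$ and $\cos d_{kl}>1/2$. The idea is to split off the $2$-dimensional subspace $\mathrm{span}(\v,\w)$ and show that $-A$ is strictly positive definite on its $(n-2)$-dimensional complement, for instance by a Schur-complement reduction exploiting the diagonal dominance provided by $\cos d_{kl}>1/2$, or by a continuous deformation to the Newtonian collinear Hessian (to which the spherical operator degenerates as $c\to 0$, where Pacella's index count applies). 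Making this sharp inertia count effective under the explicit bound $c<m_1/4$ is where the $\S^1$ case genuinely diverges from the $\H^1$ case and is expected to demand the most care.
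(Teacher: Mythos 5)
Your counting argument and the architecture of your index computation match the paper's: the bound $c<m_1/2$ confines the angles so that all mutual distances stay below $\pi/2$, the Hessian of $U_1$ restricted to the geodesic level set is positive definite (your convexity computation is the same fact the paper extracts from the row-sum-zero structure of the matrix), each ordering cell carries a unique nondegenerate minimum, and the half-turn pairs the $n!$ cells into $n!/2$ classes. Likewise the reflection $\phi_i\mapsto-\phi_i$ giving the block decoupling, the identification of the $S^1$-generator as a kernel vector of the normal block $A$, and the vector $(1,\dots,1)$ as a positive direction all agree, up to a diagonal congruence, with the paper's proof.

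The gap is exactly the step you defer: producing the $n-2$ negative directions of $A$, which is the heart of the index statement. Neither of your two sketches works as written. Diagonal dominance of $-A$ fails outright: the $k$-th row sum of $-A$ is $\sum_{l\ne k}m_km_l(\cos d_{kl}-1)/\sin^3 d_{kl}+2\lambda m_k\sin^2\theta_k$, which is strictly negative because $\lambda<0$, so the bound $\cos d_{kl}>1/2$ buys no dominance of any kind; and the deformation $c\to0$ to the Newtonian collinear Hessian would require showing that no eigenvalue of $A(c)$ other than the persistent kernel eigenvalue crosses zero along the entire path, which is precisely the spectral control you are trying to establish in the first place. The paper's actual mechanism, transplanted verbatim from the proof of Theorem \ref{thm:cc_h1&ind}, is Conley's invariant-cone argument: after a diagonal congruence one considers the linear flow $\dot\u=A\u$, which fixes the line through the zero-eigenvector and preserves its orthogonal complement for the mass inner product, and one shows that the spherical analogue of the polyhedral cone $K$ (defined by $\sum_i m_i\cos\bar\th_i u_i=0$ and $u_1/\cos\bar\th_1\le\cdots\le u_n/\cos\bar\th_n$ in the paper's angle convention) is carried strictly into itself, so the remaining eigenvector attracts and all remaining eigenvalues are strictly below the one you already know. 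The strict inequalities needed on $\partial K$ reduce to the monotonicity statements of Proposition \ref{prop:dis_ine_s1}, e.g.\ $\sin^3(\th_j-\th_k)\cos\th_j>\sin^3(\th_i-\th_k)\cos\th_i$ for $k<i<j$, whose proof via $h'(x)=\sin^2(x-\th_k)\left[\cos(2x-\th_k)+2\cos(x-\th_k)\cos x\right]>0$ is exactly where the restriction $|\th_i|<\pi/6$, i.e.\ $c<m_1/4$, is consumed. Without this (or an equivalent) argument the inertia $(0,1,n-2)$ of the normal block, and hence the second assertion of the theorem, is not established.
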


The  restriction of the value of $c$ might not be sharp, but it is necessary.  In the two-body case,  if $m_1<m_2$, then there is no ordinary central configuration on $\mathcal M_c$ for $c\in[m_1, m_2]$.  If  $m_1=m_2=m$,  there is a continuum  of  ordinary central configurations on $\mathcal M_m$, and  each of them has  inertias $ (n_0, n_+, n_-)= (1, 1, 0)$, 
see Section 10 of  \cite{DSZ18}.

Recall that the  inertia  of each collinear configuration  of relative equilibrium of the Newtonian n-body problem is also  $ (n_0, n_+, n_-)= (0, n, n-2)$
if we study the Hessian of $U_0$ on $\{ \q: I_0(\q)=1  \}/S^1$ instead of $\{ \q: I_0(\q)=1, \sum m_i \q_i=0  \}/S^1$, \cite{Moe-N}.   So Theorem \ref{thm:cc_h1&ind} and \ref{thm:cc_s1&ind} confirm the following general belief, namely,  many  properties of configurations of relative equilibrium of the Newtonian n-body problem also  hold for ordinary central configurations  of the curved n-body problem provided that the value of $I_{\pm1}(\q)$ is small enough.  In this particular case, i.e., the number and inertia of geodesic ordinary central configurations,  
Theorem \ref{thm:cc_h1&ind} indicates that the restriction of  small   value of $I_{-1}(\q)$ is not necessary  on $\H^2$;  
Theorem  \ref{thm:cc_s1&ind} gives an upper bound of the value of $I_{1}(\q)$ such that the corresponding results hold on $\S^2$.


The three manifolds $\mathcal{S}/S^1$, $S^-_c/S^1$ and $\mathcal{M}_c/S^1$ ($0<c<m_1$) are diffeomorphic, so 
 share the same Poincar\'{e} polynomial.  The number of  geodesic  ordinary central configurations  on $\mathcal{M}_c/S^1$ ($S^-_c/S^1$)  and Morse index of them  are the same as that on $\mathcal{S}/S^1$ of the   Newtonian $n$-body problem. By Theorem \ref{thm:h3cccomp} and \ref{thm:s3cccomp},  the set of ordinary central configurations on $\mathcal{M}_c/S^1$ ($S^-_c/S^1$) are compact.  Thus, we can apply  the argument  of Palmore \cite{Pal73, Sma70-3} to  obtain  the following estimation  on the number of critical points of $U_1(\q)$ (resp. $U_{-1}(\q)$)  restricted on $\mathcal M_c$ (resp. $S^-_c$).

\begin{corollary}
	Suppose that for a certain choice of masses of the curved $n$-body problem on  $\S^2$ (resp. $\H^2$) all   ordinary central configurations   are non-degenerate critical points of $U_1|_{\mathcal{M}_c/S^1}$ (resp. $U_{-1}|_{S^-_c/S^1}$). 
	 Then  in  $\mathcal{M}_c$ $(0<c<\frac{m_1}{4})$  (resp. $S^-_c$ $(c>0)$),   there are at least  
$\frac{(3n-4)(n-1)!}{2}$
 ordinary central configurations,  of which at least 
$ \frac{(2n-4)(n-1)!}{2}$
are non-geodesic. 
\end{corollary}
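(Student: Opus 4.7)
The plan is to apply Palmore's Morse-theoretic argument essentially verbatim, using the diffeomorphism of quotient manifolds and the compactness results already assembled in the paper. First I would verify that the Morse framework applies: under the non-degeneracy hypothesis every ordinary central configuration is an isolated critical point of $U_1|_{\mathcal M_c/S^1}$ (respectively $U_{-1}|_{S_c^-/S^1}$), and by Theorems \ref{thm:h3cccomp} and \ref{thm:s3cccomp} no sequence of such critical points can accumulate on the singular boundary, so the critical set lies in a compact subset of the quotient and is therefore finite.

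Next I would transfer the topology. The quotient manifolds $\mathcal M_c/S^1$ (for $0<c<m_1$, in particular for $0<c<m_1/4$) and $S_c^-/S^1$ (for $c>0$) are diffeomorphic to the Newtonian planar quotient $\mathcal S/S^1$, so they share the same Poincar\'e polynomial. Palmore \cite{Pal73} computed this polynomial and showed that the sum of its Betti numbers equals $\frac{(3n-4)(n-1)!}{2}$; the (weak) Morse inequalities then give that the total number of critical points is at least this quantity, which is the first claimed estimate.

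For the second claim, I would simply subtract off the known geodesic count: Theorems \ref{thm:cc_h1&ind} and \ref{thm:cc_s1&ind} give exactly $n!/2$ geodesic ordinary central configurations, so the number of non-geodesic ones is at least
\[ \frac{(3n-4)(n-1)!}{2} - \frac{n!}{2} = \frac{(2n-4)(n-1)!}{2}. \]

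The main obstacle, and the step that deserves the most care, is justifying in the spherical case that no ordinary central configuration in $\mathcal M_c$ can drift to a boundary stratum of $\mathcal M_c/S^1$. Theorem \ref{thm:s3cccomp} handles escape into $\Delta_+$, but one must additionally rule out configurations drifting to the equatorial stratum where some $z_i\to 0$; the restriction $0<c<m_1/4$, together with Proposition \ref{pro:s2lam} (which controls the sign of the multiplier on $\mathcal M_c$), should prevent this. Once compactness of the critical set is in hand, the Morse-theoretic bookkeeping is standard and the count coincides with Palmore's precisely because the underlying quotient manifolds are diffeomorphic.
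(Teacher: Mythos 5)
Your overall strategy is the same as the paper's (which offers no detailed proof, only the observation that the quotient manifolds are diffeomorphic to $\mathcal S/S^1$, that the critical set is compact by Theorems \ref{thm:h3cccomp} and \ref{thm:s3cccomp}, and that Palmore's argument then applies), and your handling of the compactness and of the second estimate by subtracting the $n!/2$ geodesic configurations is fine. But there is a genuine error in the step where you actually produce the first bound: the sum of the Betti numbers of $\mathcal S/S^1$ is \emph{not} $\frac{(3n-4)(n-1)!}{2}$. The Poincar\'e polynomial of $\mathcal S/S^1$ is $P(t)=\prod_{k=2}^{n-1}(1+kt)$, so $\sum_k b_k=P(1)=n!/2$, which is strictly smaller than $\frac{(3n-4)(n-1)!}{2}$ for $n\ge 3$ (e.g.\ $3$ versus $5$ when $n=3$). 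The weak Morse inequalities therefore only give the trivial bound $n!/2$ and cannot yield the stated estimate.

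Palmore's bound comes from the refined Morse relation $\sum_k C_kt^k=\sum_k b_kt^k+(1+t)Q(t)$ with $Q\ge 0$, combined with the fact that there are exactly $n!/2$ geodesic critical points, \emph{all of Morse index $n-2$}, whereas $b_{n-2}=(n-1)!$. The excess $C_{n-2}-b_{n-2}\ge \frac{(n-2)(n-1)!}{2}$ forces $Q_{n-2}+Q_{n-3}\ge\frac{(n-2)(n-1)!}{2}$, and evaluating at $t=1$ gives $\sum_k C_k\ge \frac{n!}{2}+(n-2)(n-1)!=\frac{(3n-4)(n-1)!}{2}$. So the index computation of Theorems \ref{thm:cc_h1&ind} and \ref{thm:cc_s1&ind} is the essential input for the \emph{first} estimate, not merely a device for counting the geodesic configurations to be subtracted afterwards; your write-up uses it only for the subtraction and so does not actually establish the bound $\frac{(3n-4)(n-1)!}{2}$. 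Your concern about escape to the equatorial stratum of $\mathcal M_c$ is easily dispatched: for $c<m_1$ one has $z_i^2\ge 1-c/m_i>0$ on all of $\mathcal M_c$, so $\mathcal M_c$ is a full connected component of $S_c^+$ and the only possible degeneration is into $\D_+$, which Theorem \ref{thm:s3cccomp} excludes.
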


     \section{proof of  Theorem \ref{thm:h3cccomp} and Theorem \ref{thm:s3cccomp}  } \label{sec:com}
   
    Theorem \ref{thm:h3cccomp} and  \ref{thm:s3cccomp}   are  analogous  to   Shub's  lemma
in the   Newtonian $n$-body problem. It is first proved by Shub \cite{Shu70}. Moeckel gives a shorter proof in \cite{Moe-N}.  The idea of Moeckel  is applicable  if we   use the following  Cartesian coordinate system for $(\S^3)^n$ ($(\H^3)^n$).

Recall that we have  written the Euler-Lagrange equation in Section \ref{sec:cnbpeuler}   with the Cartesian coordinates. For each particle, we use the four coordinates  $(x_i, y_i, z_i, w_i)$ to represent its position.  That coordinate system is redundant. 
 Three coordinates are enough to  represent the positions of each particle. For instance, if $x_i\ne0$, then $\q_i$ is in an open region of $\S^3$ for that $(y,z,w)$ can serve as a local chart. Then $\frac{ \partial (U_1-\lambda I_1)}{\partial \q_i}=0$ is equivalent to $\frac{ \partial (U_1-\lambda I_1)}{\partial y_i}=\frac{ \partial (U_1-\lambda I_1)}{\partial z_i}=\frac{ \partial (U_1-\lambda I_1)}{\partial w_i}=0$. 
Since $x_i^2+y_i^2=1-z_i^2-w_i^2$, so we have 
\[  \frac{\partial I}{\partial y_i} = 0, \  \frac{\partial I}{\partial z_i} = -2m_iz_i, \   \frac{\partial I}{\partial w_i} = -2m_iw_i. \]
By $\cos d_{ij}=x_ix_j+y_iy_j+z_iz_j+w_iw_j$, we obtain 
\[   \frac{\partial \cot d_{ij}}{\partial y_i} =  \frac{-1}{\sin^2 d_{ij}}  \frac{\partial d_{ij}}{\partial y_i} =   \frac{1}{\sin^3 d_{ij}} ( \frac{\partial x_i}{\partial y_i} x_j + y_j)  =   \frac{1}{\sin^3 d_{ij}} (  y_j -\frac{x_j}{ x_i} y_i).  \]
Similarly, we have $ \frac{\partial \cot d_{ij}}{\partial z_i} = \frac{1}{\sin^3 d_{ij}} (  z_j -\frac{x_j}{ x_i} z_i),  \frac{\partial \cot d_{ij}}{\partial w_i} = \frac{1}{\sin^3 d_{ij}} (  w_j -\frac{x_j}{ x_i} w_i)$. 
Thus, 
 the  central configuration equations for $\q_i$ can be written as 
\begin{equation} \label{equ:ree-yzws3}
\sum_{j\ne i} m_im_j\frac{\v_j -\frac{x_j}{x_i}\v_i}{\sin^3 d_{ij}} =-\lambda 2m_i(0, z_i, w_i), \ {\rm \ if }\  x_i\ne 0, 
\end{equation}  
where $\v_i=(y_i, z_i, w_i)$. Similarly, if $w_i\ne0$, the central configuration equation for $\q_i$ can be written as 
\begin{equation} \label{equ:ree-xyzs3}
\sum_{j\ne i} m_im_j\frac{\u_j -\frac{w_j}{w_i}\u_i}{\sin^3 d_{ij}} =\lambda 2m_i(x_i, y_i, 0), \ {\rm \ if }\  w_i\ne 0, 
\end{equation}  
where $\u_i= (x_i, y_i, z_i)$. Similarly, the equations can be written in   other forms if $y_i\ne 0$ or $z_i\ne 0$.  In $\H^3$,  $(x, y, z)$ serve as a global chart.  Thus, the central configuration equations  in $\H^3$  can be written as 
\begin{equation} \label{equ:ree-xyzh3}
\sum_{j\ne i} m_im_j\frac{\u_j -\frac{w_j}{w_i}\u_i}{\sinh^3 d_{ij}} =\lambda 2m_i(x_i, y_i, 0), \ i=1, ..., n. 
\end{equation}

   \begin{proof} [Proof of Proposition \ref{pro:h2cccomp_lam}]
   View the two sides of the central configuration equations \eqref{equ:ree-xyzh3} as vectors in $\R^{3}$, and multiply on both sides by $\u_i$. We obtain 
   	\begin{align*}
   	2\lambda m_i (x_i^2 +y_i^2)= \sum _{j\ne i}\frac{\u_j\cdot \u_i -\frac{w_j}{w_i}(w_i^2 -1)}{\sinh^3 d_{ij}} =\sum _{j\ne i}\frac{ \frac{w_j}{w_i} - \cosh d_{ij}}{\sinh^3 d_{ij}}. 
   	\end{align*}
   	Assume that $|\Lambda_1|=k_1\ge 2$.  Denote by $\q(l)$  
   	the sequence of  ordinary central configurations  that converges to $X$.  For each $l$,  assume that $\q_{i(l)}$ of the first cluster has 
   	 the biggest value of $w$, i.e., $w_{i(l)}\ge w_{j(l)}$ for any $1\le j(l)\le k_1$. The above equality for  $\q_{i(l)}$ is
   	\[  2\lambda(l) m_{i(l)} (x_{i(l)}^2 +y_{i(l)}^2)
   	=\sum _{j(l)\ne i(l), j(l)\in \Lambda_1}\frac{ \frac{w_{j(l)}}{w_{i(l)}} - \cosh d_{ij(l)}}{\sinh^3 d_{ij(l)}}+O(1).\]
   	As $l\to \infty$, each term in  the above sum  approaches $-\infty$ since
   	$$\frac{ \frac{w_{j(l)}}{w_{i(l)}} - \cosh d_{ij(l)}}{\sinh^3 d_{ij(l)}} \le \frac{ 1- \cosh d_{ij(l)}}{\sinh^3 d_{ij(l)}}=\frac{ -\frac{d^2_{ij(l)}}{2} +O(d^4_{ij(l)})}{\sinh^3 d_{ij(l)} }\to -\infty. $$
   	The value of $ x_{i(l)}^2 +y_{i(l)}^2$ is obviously  bounded above.  Hence,  the multiplier of the sequence of  ordinary central configurations   approaches  $-\infty$. 
   \end{proof}

   \begin{proof}[Proof of Theorem \ref{thm:h3cccomp}]
   	Since $I_{-1}(X)=c>0$,  not all clusters of $X$ are at $(0,0, 0, 1)$. Assume that the first cluster is not at $(0,0, 0, 1)$. Note that  the central configuration equations  can be written as 
   	$\sum _{j\ne i}\frac{w_i\u_j -w_j\u_i}{\sinh^3 d_{ij}}=2\lambda m_iw_i (x_i, y_i, 0), i=1, ..., n.$  Then adding the equations corresponding to particles in the first cluster, we obtain 
   	\[\sum_{i\in \Lambda_1} \sum _{j\ne i}\frac{w_i\u_j -w_j\u_i}{\sinh^3 d_{ij}}=  \sum_{i\in \Lambda_1} \sum _{j\notin \Lambda_1 }\frac{w_i\u_j -w_j\u_i}{\sinh^3 d_{ij}}=2\lambda  \sum_{i\in \Lambda_1}  m_iw_i (x_i, y_i, 0).  \]
   	Assume that there is  a sequence of  ordinary central configurations $\q(l)$  that converges to $X$. The above equality reads  $O(1)=\infty$. This contradiction  shows that  there is a neighborhood of $X$ in which there is no    ordinary central configuration. 
   \end{proof}

\begin{proof}[Proof of Theorem \ref{thm:s3cccomp}]
	
Let  $X\in \mathcal A$.  Assume that the  first cluster of $X$ does not lie on $\S^1_{xy}\cup \S^1_{zw}$, i.e., $\q_1'\notin\S^1_{xy}\cup \S^1_{zw}$.  Since $z_1'^2 +w_1'^2\ne 0$, we can assume that $w_1'\ne 0$.  
 Let $\q$ be an ordinary central configuration 
 close to $X$. Then
  the central configuration equations for particles in the first two  clusters  can be written as 
   	$$\sum _{j\ne i,  j\in\Lambda_1\cup \Lambda_2}\frac{w_i\u_j -w_j\u_i}{\sin^3 d_{ij}}+O(1)=2\lambda m_iw_i (x_i, y_i,  0), i\in \Lambda_1\cup \Lambda_2, $$
   	where the $O(1)$ term corresponds to  interactions between $\q_i$ and particles of the other $2s-2$ clusters. Adding those equations, 
    we obtain 
   	\[\sum_{i\in\Lambda_1\cup \Lambda_2} \left(\sum _{j\ne i, j\in \Lambda_1\cup \Lambda_2}\frac{w_i\u_j -w_j\u_i}{\sinh^3 d_{ij}}+O(1)\right)=  O(1)=2\lambda \sum_{i\in \Lambda_1\cup \Lambda_2}   m_iw_i (x_i, y_i,  0).   \]
   	  Assume that there is  a sequence of  ordinary central configurations $\q(l)$  that converges to $X$.  Since $X$ contains sub configuration which is collision singular or antipodal singular, we can prove that the absolute value of multiplier $|\lambda(l)|$ goes to infinite by  argument similar to that in the proof of Proposition \ref{pro:h2cccomp_lam}. 
   	Note that  
   	$$\sum_{i\in \Lambda_1\cup \Lambda_2}  m_iw_i (x_i, y_i,  0)\to  \sum_{i=1}^{k_1} m_i(x_1'w_1',y_1'w_1',0 )+\sum_{i=k_1+1}^{k_2} m_i(x_1'w_1',y_1'w_1',0 ) \ne 0$$
   	 since $x_1'^2 +y_1'^2\ne 0$ and $w_1'\ne 0$. The above equality reads  $O(1)=\infty$. This contradiction  shows that  there is a neighborhood of $X$ in which there is no    ordinary central configuration. 
   \end{proof}

       \section{proof of  Theorem \ref{thm:cc_h1&ind} and Theorem \ref{thm:cc_s1&ind}  } \label{sec:ind}
    
     In the   Newtonian $n$-body problem, the number of collinear configurations of relative equilibrium  is first found  to be $\frac{n!}{2}$ by Moulton \cite{Mou10}, then Smale gives a shorter proof \cite{Sma70-2}.  The index of them is 
      $n-2$ \cite{Pac87}. 
 The idea in \cite{Pac87}, due to Conley,   is applicable  if we   use an angle coordinate system for $(\S^2)^n$ ($(\H^2)^n$).
    Theorem \ref{thm:cc_h1&ind} (the  $\H^2$ case) is proved in  the first subsection, divided into two parts. 
     Theorem \ref{thm:cc_s1&ind} (the  $\S^2$ case) is proved by a similar way with minor modification in the last subsection.

    \subsection{Geodesic  ordinary central configurations  on $\H^2$} \label{subsec:h1}
   \begin{proposition}\label{pro:h2lam}
  	Let $\q$ be an ordinary central configuration on  $\H^2$. Then the multiplier is negative. 
  \end{proposition}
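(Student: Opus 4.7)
The plan is to derive a scalar identity that expresses $2\lambda I_{-1}(\q)$ as a sum over unordered pairs of terms whose signs can be controlled directly. The starting point repeats the algebraic step already used in the proof of Proposition \ref{pro:h2cccomp_lam}: dotting the central configuration equation \eqref{equ:ree-xyzh3} with $\u_i=(x_i,y_i,z_i)$ and invoking the Minkowski identities $\u_i\cdot\u_j=w_iw_j-\cosh d_{ij}$ and $\u_i\cdot\u_i=w_i^2-1$ collapses the right-hand side to
\[
2\lambda m_i(x_i^2+y_i^2)=\sum_{j\ne i}m_im_j\,\frac{\frac{w_j}{w_i}-\cosh d_{ij}}{\sinh^3 d_{ij}},\qquad i=1,\dots,n.
\]
Adding these $n$ identities and pairing the $(i,j)$ and $(j,i)$ contributions produces the symmetric form
\[
2\lambda I_{-1}(\q)=\sum_{i<j}m_im_j\,\frac{\frac{w_i}{w_j}+\frac{w_j}{w_i}-2\cosh d_{ij}}{\sinh^3 d_{ij}}.
\]

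The heart of the argument is to show each summand on the right is strictly negative. After clearing the positive factor $w_iw_j$, this amounts to the pointwise bound
\[
w_i^2+w_j^2-2w_iw_j\cosh d_{ij}\le -\sinh^2 d_{ij}\qquad(\q_i\ne \q_j).
\]
Substituting $\cosh d_{ij}=w_iw_j-\u_i\cdot\u_j$ and $\sinh^2 d_{ij}=\cosh^2 d_{ij}-1$, the inequality rearranges—after cancellation of the $w_i^2w_j^2$ terms—to
\[
(\u_i\cdot\u_j)^2\le(w_i^2-1)(w_j^2-1)=|\u_i|^2|\u_j|^2,
\]
which is nothing but the Cauchy-Schwarz inequality in $\R^3$. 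Dividing the pairwise identity by the positive quantity $w_iw_j\sinh^3 d_{ij}$ then gives
\[
\frac{\frac{w_i}{w_j}+\frac{w_j}{w_i}-2\cosh d_{ij}}{\sinh^3 d_{ij}}\le -\frac{1}{w_iw_j\sinh d_{ij}}<0.
\]

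Plugging this bound into the symmetric identity yields $2\lambda I_{-1}(\q)\le-\sum_{i<j}\frac{m_im_j}{w_iw_j\sinh d_{ij}}<0$, so the conclusion $\lambda<0$ follows once $I_{-1}(\q)>0$ is checked. The latter is a one-line remark: $I_{-1}(\q)=0$ would force $x_i=y_i=0$ for every $i$, pinning all particles at the vertex $(0,0,0,1)$ of $\H^2_{xyw}$, a total collision excluded from the configuration space. I do not expect a genuine obstacle; the one nontrivial ingredient is the Cauchy-Schwarz reduction, which is a routine but crucial algebraic step.
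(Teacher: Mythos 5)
Your proof is correct, but it follows a genuinely different route from the paper's. The paper's argument is local and extremal: it singles out the particle $\q_n$ with the largest $w$-coordinate, dots only that particle's equation with $\u_n$, and observes that every resulting term $\bigl(\tfrac{w_j}{w_n}-\cosh d_{jn}\bigr)/\sinh^3 d_{jn}$ is negative simply because $w_j\le w_n$ gives $\tfrac{w_j}{w_n}\le 1<\cosh d_{jn}$; since $w_n^2-1>0$, this forces $\lambda<0$ in two lines. You instead sum the dotted identities over all $i$, symmetrize over unordered pairs, and control the sign of each pairwise term via the Cauchy--Schwarz reduction $(\u_i\cdot\u_j)^2\le(w_i^2-1)(w_j^2-1)$ — a correct computation (I checked the algebra: the $w_i^2w_j^2$ terms cancel exactly as you claim, and your retention of the factor $m_im_j$ fixes a small typo in the paper's own display). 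What your approach buys is a quantitative virial-type inequality, $2\lambda I_{-1}(\q)\le-\sum_{i<j}\tfrac{m_im_j}{w_iw_j\sinh d_{ij}}<0$, which is stronger than bare negativity and does not depend on choosing an extremal particle; moreover it works verbatim in all of $\H^3$ rather than only on $\H^2$. What it costs is length: the paper's extremal-particle trick avoids both the symmetrization and the Cauchy--Schwarz step. Your closing check that $I_{-1}(\q)>0$ (ruling out total collision at $(0,0,0,1)$) is the right thing to verify and is handled correctly.
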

  
  \begin{proof}
  	Assume $w_i\le w_n$ for $i=1, ..., n$. Then $w_n>1$.  Recall that the central configuration equation  \eqref{equ:ree-xyzh3} for $\q_n$ can be written as 
  	$\sum _{j\ne n}\frac{\u_j -\frac{w_j}{w_n}\u_n}{\sinh^3 d_{jn}}=2\lambda m_n \u_n$. Here $\u_i=(x_i, y_i)$ since the configuration is on $\H^2$.  Multiplying $\u_n$ on both sides, the right side becomes $2\lambda m_n (w_n^2-1)$, and the left side becomes 
  	\[  \sum _{j\ne n}\frac{\u_j \cdot \u_n -\frac{w_j}{w_n}(w_n^2-1)}{\sinh^3 d_{jn}}  = \sum _{j\ne n}\frac{\frac{w_j}{w_n} - \cosh d_{jn} }{\sinh^3 d_{jn}}<0.   \]
  	This shows that the multiplier  $\lambda$ is negative.   
  \end{proof}

   We use an angle coordinate system for $\H^2$,  $(\th , \vp)$,  $\th, \vp \in \R $.   The relationship between  Cartesian coordinates $(x,y,w)$ and $(\th , \vp)$ is   
  \[  (x,y,w)=(\sinh \th, \cosh \th \sinh \vp,\cosh \th \cosh \vp) .\]
  Then  $\H^1=\H^1_{xw}$ is parameterized by $(\th, 0)$ and  $(\H^2)^n$ is parameterized by $(\th_1, ..., \th_n, $
  $ \vp_1, ..., \vp_n)$.  
  The momentum of inertia is 
  \[  I_{-1}(\q)= \sum_{i=1}^{n} m_i (x_i^2 +y_i^2)=   \sum_{i=1}^{n} m_i (\sinh^2 \th_i +\cosh^2 \th_i \sinh^2 \vp_i). \] 
  In the above angle coordinates,  a configuration $(\th_1, ..., \th_n, \vp_1, ..., \vp_n)$ is an ordinary central configuration  if $$ \frac{\partial U_{-1}}{\partial \th_i}=\lambda \frac{\partial I_{-1}}{\partial \th_i}, \ \frac{\partial U_{-1}}{\partial \vp_i}=\lambda \frac{\partial I_{-1}}{\partial \vp_i},\ i=1, ..., n.   $$ 
  
 Restricted on $\H^1$, we can just use the theta coordinates $(\th_1, ..., \th_n)$ to parametrize the configurations.  Then 
 	\begin{align*}
 & \q_i=(\sinh\theta_i, 0, \cosh\theta_i),\ \ &d_{ij}=|\theta_i-\theta_j|,\\
  &U(\q)=\sum_{1\le i<j\le n}m_im_j\coth d_{ij}\ \ &
 I(\q)=\sum_{i=1}^nm_i\sinh^2\theta_i.
 	\end{align*}
 A configuration $(\th_1, ..., \th_n)$ is an ordinary central configuration  if $ \frac{\partial U_{-1}}{\partial \th_i}=\lambda \frac{\partial I_{-1}}{\partial \th_i}$, or explicitly, 
  \begin{equation}\label{equ:reeh1}
  \sum_{j\ne i}\frac{m_im_j \sinh(\th_j-\th_i)}{\sinh ^3 d_{ij}}=\lambda m_i \sinh 2\th_i,  \ i=1, ..., n. 
  \end{equation}
\begin{proof}[Proof of Theorem \ref{thm:cc_h1&ind}, Part I]
     The numbers of geodesic  ordinary central configurations  is proved to be $\frac{n!}{2}$ in \cite{DSZ18}.  The argument  is similar to  that of  the    Newtonian $n$-body problem \cite{Sma70-2}.  We briefly repeat the idea  here. 
     
     Restrict the function $U_{-1}$ on the set   $S^-_c\cap (\H^1)^n=\{ \q\in (\H^1)^n-\D_-| I_{-1}(\q) =c \} $, which  has  $n!$ components.   Each component is   homeomorphic to an open ball.  
     On each component, there is at least one minimum. All critical points are minima since the Hessian of $U_{-1}$ at each  of them is  positive definite. Thus, there are $n!$ critical points, and the number of  ordinary central configurations  is $\frac{n!}{2}$ by the $SO(2)$ symmetry. 
     
For completeness, we repeat the proof  of positive definiteness.   Let $\bar  {\q}=(\bar  \th_1, ..., \bar  \th_n)$ be  one critical point of $U_{-1}|_{S^-_c}$. 
     	 Assume that $\bar  \th_1< ...< \bar  \th_n$ and that the multiplier  is $\bar{\lambda}$. Then the Hessian of $U_{-1}|_{S^-_c\cap (\H^1)^n}$ equals 
 $ D^2(U_{-1}-\bar \lambda I_{-1})|_{\bar{\q}} $    
     	 as quadratic  forms on  $T_{\bar {\q}} S^-_c\cap (\H^1)^n$,  
     	 the tangent space of $S^-_c\cap (\H^1)^n$ at $\bar  \q$. 
      By straightforward computations, we obtain 
     	\begin{align*}
     &\ \!D^2 (U_{-1} -\bar \lambda I_{-1})|_{\bar{\q}} \\ 
     	=& 2\begin{bmatrix}
     	\sum\limits_{\substack{j=1, j\ne 1}}^n \frac{m_1m_j\cosh d_{1j}}{\sinh^3d_{1j}} & -\frac{m_1m_2\cosh d_{12}}{\sinh^3d_{12}}& \cdots& -\frac{m_1m_n\cosh d_{1n}}{\sinh^3d_{1n}}\\
     	-\frac{m_2m_1\cosh d_{12}}{\sinh^3d_{12}}&  \sum\limits_{\substack{j=1,j\ne 2}}^n \frac{m_2m_j\cosh d_{2j}}{\sinh^3d_{2j}}&\cdots& -\frac{m_2m_n\cosh d_{2n}}{\sinh^3d_{2n}}\\
     	\cdots&  \cdots& \cdots& \cdots\displaybreak[0]\\
     	-\frac{m_1m_n\cosh d_{1n}}{\sinh^3d_{1n}}&\cdots&\cdots&\sum\limits_{\substack{j=1, j\ne n}}^n \frac{m_nm_j\cosh d_{nj}}{\sinh^3d_{nj}}
     	\end{bmatrix}\\
     	&-2\bar \lambda \begin{bmatrix}
     	m_1 \cosh 2\theta_1&0&\cdots&0\\
     	0&m_2 \cosh 2\theta_2&\cdots&0\\
     	\cdots&  \cdots& \cdots& \cdots\\
     	0&\cdots &\cdots &m_n \cosh 2\theta_n\
     	\end{bmatrix}.
     	\end{align*}
     The second part is   obviously positive definite. 
     	For the  first part,  let us  take any nonzero vector 
     	$\v=(v_1,\cdots, v_n)\in T_{\bar {\q}} S^-_c\cap (\H^1)^n$.  We obtain 
     	\begin{align*}
     	\v^T (D^2U_{-1})\v=\sum_{i=1}^n\sum_{j=1}^n(D^2 U_{-1})_{ij} v_iv_j=\ & 2\sum_{i=1}^{n}\sum\limits_{\substack{j=1\\j\ne i}}^n \frac{m_im_j\cosh d_{ij}}{\sinh^3d_{ij}}v_i^2\\- 2\sum_{i=1}^n\sum \limits_{\substack{j=1\\j\ne i}}^n \frac{m_im_j\cosh d_{ij}}{\sinh^3d_{ij}}v_iv_j
     	=\ &\sum_{i=1}^n\sum\limits_{\substack{j=1\\j\ne i}}^n \frac{m_im_j\cosh d_{ij}}{\sinh^3d_{ij}}(v_i-v_j)^2 \geq 0.
     	\end{align*}
     	Hence, we  conclude that Hessian of $U_{-1}|_{S^-_c\cap (\H^1)^n}$ is positive definite. 
\end{proof}    
We have seen that system \eqref{equ:reeh1} have $n!/2$ solutions.   Each is a minimum  of 
  $U_{-1}|_{S^-_c\cap (\H^1)^n}$.  We are going to study the Hessian  on $S^-_c$ at those critical points.  
  
  Let $\bar  {\q}=(\bar  \th_1, ..., \bar  \th_n, 0, ..., 0)$ be  one of the  $n!/2$ geodesic  ordinary central configurations.  
   Assume that $\bar  \th_1< ...< \bar  \th_n$ and that the multiplier  is $\bar{\lambda}$. Then $\bar  {\q}$ is also a critical point of  $f=U_{-1}-\bar \lambda I_{-1}$.
  Denote by $\mathcal{H}(U_{-1}|_{S^-_c}, \bar  {\q})$  (resp. $\mathcal{H}(f, \bar  {\q})$) the Hessian of $U_{-1}|_{S^-_c}$ (resp. $f$) at the critical point $\bar {\q}$.  
Then we have 
  \[    \mathcal{H}(f, \bar  {\q}) =\mathcal{H}(U_{-1}|_{S^-_c}, \bar  {\q}) \]
  as quadratic  forms on  $T_{\bar {\q}} S^-_c$,   
  the tangent space of $S^-_c$ at $\bar  \q$.

The set $S^-_c$ is  hyper surface in $(\H^2)^n$ with co-dimension 1. 
The tangent space of $(\H^2)^n$ at $\bar  \q$, $T_{\bar {\q}} (\H^2)^n$, is   spanned by  $  \frac{\partial }{\partial \th_1}, ..., \frac{\partial }{\partial \th_n}, \frac{\partial }{\partial \vp_1}, ..., \frac{\partial }{\partial \vp_n}$.   The normal of $S^-_c$  at $\bar {\q}$  
is 
  $ \nabla I_{-1}|_{\bar {\q}} = \sum_{i=1}^{n}m_i \sinh 2\bar \th_i \frac{\partial }{\partial \th_i}, $
   which is in the subspace spanned by $  \frac{\partial }{\partial \th_1}, ..., \frac{\partial }{\partial \th_n}$. Thus, we see $T_{\bar {\q}} S^-_c=V_1 \oplus V_2$,
   \[  
   \ V_1=  <\frac{\partial }{\partial \th_1}, ..., \frac{\partial }{\partial \th_n} > /< \nabla I_{-1}|_{\bar {\q}}>, \ V_2=<\frac{\partial }{\partial \vp_1}, ..., \frac{\partial }{\partial \vp_n} > .\]
where  $ <b_1 ..., b_n >$ is the linear space spanned by the vectors $b_1, ..., b_n$. 

   We claim that $\mathcal{H}(f, \bar  {\q})$ is block-diagonal with respect to this splitting of 
   $T_{\bar {\q}} S^-_c$. Direct computation leads to  $\frac{\partial ^2 I_{-1}}{\partial \th_i \partial \vp_j} |_{\bar {\q}}=0$ for all pairs of $(i, j)$.  We claim that $\frac{\partial ^2 U_{-1}}{\partial \th_i \partial \vp_j} |_{\bar {\q}}=0$ for all pairs of $(i, j)$.  Denote by $\q +h_i$ the coordinate $(\th_1, ..., \th_i+h, ...,  \th_n, \vp_1,  ..., \vp_n)$, by  $\q +k_j$ the coordinate $(\th_1, ...,  \th_n, \vp_1,  ..., \vp_j+k, ..., \vp_n)$. Then 
  \begin{align*}
  &\frac{\partial ^2 U_{-1}}{\partial \vp_j \partial \th_i } |_{\bar {\q}}=\lim _{k \to 0} \frac{1}{2k}( \frac{\partial  U_{-1}}{\partial \th_i } |_{\bar {\q}+k_j}    -  \frac{\partial  U_{-1}}{\partial \th_i  } |_{\bar {\q} -k_j}   )\\
  &=\lim _{(h,k)\to (0,0)} \frac{1}{2hk}  (  U_{-1}(\bar {\q}+k_j + h_i)  - U_{-1}(\bar {\q}+k_j) 
- U_{-1}(\bar {\q}-k_j + h_i)  + U_{-1}(\bar {\q}-k_j) )\\
&=0. 
  \end{align*}
  Here, we use the symmetry $U_{-1}(\bar  \q +k_j)= U_{-1}(\bar  \q -k_j)$,  $U_{-1}(\bar {\q}+k_j + h_i) =
   U_{-1}(\bar {\q}-k_j + h_i)$.  Thus, $\mathcal{H}(f, \bar  {\q})$ is block-diagonal, 
   $$\mathcal{H}(f, \bar  {\q}) |_{\bar  \q}=  D^2 U_{-1}-\bar  \lambda D^2I_{-1}= \diag\{ [\frac{\partial^2 U}{\partial \th_i\partial\th_j}-\bar  \lambda   \frac{\partial^2 I_{-1}}{\partial \th_i\partial\th_j}],  [\frac{\partial^2 U}{\partial \vp_i\partial\vp_j}-\bar  \lambda   \frac{\partial^2 I_{-1}}{\partial \vp_i\partial\vp_j}]  \}. $$  
   Denote by $\mathcal{H}_1,  \mathcal{H}_2$ the two $n\times n$ blocks respectively.  If $b \in T_{\bar {\q}} S^-_c$ has decomposition $b=b_1+b_2$, then 
\begin{equation} \label{equ:Hes}
\mathcal{H}(f, \bar  {\q})(b) = \mathcal{H}_1(b_1) +   \mathcal{H}_2(b_2).  \end{equation}

We pass  to find the inertia of $\mathcal H_1$ on $V_1$ and that of  $\mathcal H_2$ on $V_2$.  The first one has already been found in the above proof. Indeed, it is  the Hessian of $U_{-1}|_{S^-_c\cap (\H^1)^n}$ 
So $\mathcal H_1$ on $V_1$ has inertia $ (n_0, n_+, n_-)= (0, n-1, 0)$.

\begin{proof}[Proof of Theorem \ref{thm:cc_h1&ind}, Part II]
	    By the above argument, it only remains to show that  $\mathcal H_2$ on $V_2$ has inertia $ (n_0, n_+, n_-)= (1, 1, n-2)$.

    Direct computation shows that $\mathcal H_2$  is 
     \begin{align*}
     & \begin{bmatrix}
     \sum\limits_{\substack{j=1, j\ne 1}}^n \frac{-m_1m_j\cosh \bar  \th_1\cosh \bar  \th_j}{\sinh^3d_{1j}} & \frac{m_1m_2 \cosh \bar  \th_1\cosh \bar  \th_2}{\sinh^3d_{12}}& \cdots& \frac{m_1m_n \cosh \bar  \th_1\cosh \bar  \th_n}{\sinh^3d_{1n}}\\
     \frac{m_2m_1\cosh \bar  \th_1\cosh \bar  \th_2}{\sinh^3d_{12}}&  \sum\limits_{\substack{j=1,j\ne 2}}^n \frac{-m_2m_j\cosh \bar  \th_2\cosh \bar  \th_j}{\sinh^3d_{2j}}&\cdots& \frac{m_2m_n\cosh \bar  \th_2\cosh \bar  \th_n}{\sinh^3d_{2n}}\\
     \cdots&  \cdots& \cdots& \cdots\displaybreak[0]\\
     \frac{m_1m_n\cosh \bar  \th_1\cosh \bar  \th_n}{\sinh^3d_{1n}}&\cdots&\cdots&\sum\limits_{\substack{j=1, j\ne n}}^n \frac{-m_nm_j\cosh \bar  \th_n\cosh \bar  \th_j}{\sinh^3d_{nj}}
     \end{bmatrix}\\
     &-2\bar  \lambda \begin{bmatrix}
     m_1 \cosh^2\bar  \th_1&0&\cdots&0\\
     0&m_2 \cosh^2\bar  \th_2&\cdots&0\\
     \cdots&  \cdots& \cdots& \cdots\\
     0&\cdots &\cdots &m_n \cosh^2\bar  \th_n\
     \end{bmatrix}. 
     \end{align*}

     Let 
     $C:={\rm diag}\{ \cosh \bar  \th_1, \cdots, \cosh \bar  \th_n \},
     	M :={\rm diag}\{ m_1, \cdots, m_n \}, $ and 
     		\begin{align*}
     	A:&=\begin{bmatrix}
     	\sum\limits_{\substack{j=1, j\ne 1}}^n \frac{-m_j\cosh \bar  \th_j}{\cosh \bar  \th_1\sinh^3d_{1j}} & \frac{m_2}{\sinh^3d_{12}}& \cdots& \frac{m_n}{\sinh^3d_{1n}}\\
     	\frac{m_1}{\sinh^3d_{12}}&  \sum\limits_{\substack{j=1,j\ne 2}}^n \frac{-m_j\cosh \bar  \th_j}{\cosh \bar  \th_2\sinh^3d_{2j}}&\cdots& \frac{m_n}{\sinh^3d_{2n}}\\
     	\cdots&  \cdots& \cdots& \cdots\displaybreak[0]\\
     	\frac{m_1}{\sinh^3d_{1n}}&\cdots&\cdots&\sum\limits_{\substack{j=1, j\ne n}}^n \frac{-m_j\cosh \bar  \th_j}{\cosh \bar  \th_n\sinh^3d_{nj}}
     	\end{bmatrix}.
     	\end{align*}
     	Then it is easy to check that $\mathcal H_2 =CM (A-2\bar  \lambda)C$.  Let $D=CM$.  Note that  $CM$, $C, D$  are all positive definite and diagonal.   Since
     \begin{align*}
    \mathcal H_2 &=[(D ^{\frac{1}{2}})^TD ^{\frac{1}{2}} (A-2\bar  \lambda) D ^{\frac{-1}{2}}D ^{\frac{1}{2}}]C \\
    &=C^{\frac{1}{2}}C^{\frac{-1}{2}}  [(D ^{\frac{1}{2}})^TD ^{\frac{1}{2}} (A-2\bar  \lambda) D ^{\frac{-1}{2}}D ^{\frac{1}{2}} ] C^{\frac{1}{2}}(C^{\frac{1}{2}})^T. 
     \end{align*}
     	 the inertia of $\mathcal H_2$ equals that of $A-2\bar  \lambda$ by  Sylvester's law of inertia.  
  
     	  For 	the inertia of  $A-2\bar  \lambda$, we will  study the eigenvalues of $A$ and compare them with the negative constant $2\bar  \lambda$. First,  there are two obvious eigenvectors of $A$:
     	\begin{align*}
     	\c_1&=(\cosh \bar  \th_1, \cdots, \cosh \bar  \th_n), \ \ & A\c_1&=0\c_1,\\
     	\c_2&=(\sinh \bar  \th_1, \cdots, \sinh \bar  \th_n), \ \ & A\c_2&=2\bar  \lambda \c_2. \end{align*}
     	The first vector  $\c_1$ can be obtained by inspecting the matrix, and the second vector $\c_2$ can be seen from equation \eqref{equ:reeh1}. 
     
     		Note that  the matrix $CMAC$ is symmetric,  sum of each row is zero, the diagonal is negative and other elements are positive.  By the argument used in the proof of Theorem \ref{thm:cc_h1&ind} (part I), we conclude that all other eigenvalues of $A$ is strictly negative.

     	We  claim that all other eigenvalues of $A$ are smaller than $2\bar  \lambda$. 
     	The idea is to consider the linear vector field $Y=A\u$  in $\R^n, \u=(u_1, \cdots, u_n)^T\in \R^n.$ 
Introduce  an inner product  in $\R^n$, $(\u, \v)=\u^T M \v$.
     	Then $A$ is symmetric with respect to this inner product and all eigenvectors of $A$ are mutually orthogonal. 
Then the line $t\c_1$ consists all fixed points of the flow, and the $(n-1)$-dimensional space $\c_1^\bot$ is invariant under the flow. In particular, each of  the other eigenvectors corresponds to  a stable manifold of the flow. 
     	Conley observed that to show that all other eigenvalues of $A$ are smaller than $2\bar  \lambda$ is equivalent to showing that  the line  $t\c_2$ attracts the flow lines. It is enough to find a cone $K\in \c_1^\bot $ around $t\c_2$ that is carried strictly inside itself by the flow.
     	\begin{figure}[!h]
     			\includegraphics [width=0.4 \textwidth] {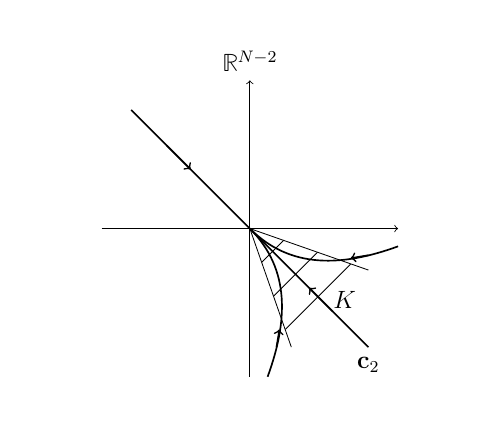}
     		\caption{The linear flow in  $\c_1^\bot$}
     		\label{fig:conley}
     	\end{figure} 
   Define the polyhedral cone  in $\c_1^\bot $ as 
     	\[ K=\left\{ \u\in \R^n| \sum_{i=1} ^n m_i \cosh \bar  \th_i u_i=0, \frac{u_1}{\cosh \bar  \th_1}\le \frac{u_2}{\cosh \bar  \th_2}\le \cdots \le \frac{u_n}{\cosh \bar  \th_n} \right \}. \]
   We verify that  $\c_2 \in K$.  First, $\c_2 \bot \c_1$ since $\c_2$ is also an eigenvector of $A$.  Or,  the identity $ \sum_{i=1}^n m_i  x_i   w_i=0$ 
    of Theorem \ref{thm:mc} reads $(\c_1,  \c_2)=0$ in this case. 
 Second, the inequalities in  the definition read 
     $ \frac{\sinh \bar  \th_1}{\cosh \bar  \th_1}\le \frac{\sinh \bar  \th_2}{\cosh \bar  \th_2}\le \cdots \le \frac{\sinh \bar  \th_n}{\cosh \bar  \th_n}$, which is true since 
     $\bar  \th_1< \bar  \th_2<\cdots<\bar  \th_n$. 
 
 We  verify that $\c_2$  is the only  eigenvector of $A$ in $K$.  Since $\bar  \th_1< \bar  \th_2<\cdots<\bar  \th_n$ and $\sum_{i=1} ^n m_i \sinh 2\bar  \th_i =0$, we  assume that there is some $I\in (1, n)$ such that 
 $\bar \th_1 < ...<\bar \th_I\le 0<\bar \th_{I+1}<...<\bar \th_n.$
 Then by  the definition of $K$, for any $\u=(u_1, ..., u_n)^T\in K$, we have
 \begin{equation*}
 \begin{cases}
\frac{u_i}{\cosh \bar  \th_i} m_i \sinh 2\bar \th_i \ge   \frac{u_I}{\cosh \bar  \th_I} m_i \sinh 2\bar \th_i, \  & 1\le i\le I; \cr
\frac{u_i}{\cosh \bar  \th_i} m_i \sinh 2\bar \th_i \ge   \frac{u_{I+1}}{\cosh \bar  \th_{I+1}} m_i \sinh 2\bar \th_i, \ & I+1\le i\le n.  
 \end{cases}
 \end{equation*}
Then for any point $\u\in K$, we have 
\begin{align*}
 2(\u,  \c_2)& =   2 \sum_{i=1}^n u_i m_i \sinh \bar \th_i =   \sum_{i=1}^n \frac{u_i}{\cosh \bar  \th_i} m_i \sinh 2 \bar\th_i   \\
 &=\sum_{i=1}^I \frac{u_i}{\cosh \bar  \th_i} m_i \sinh 2 \bar\th_i  +\sum_{i=I+1}^n \frac{u_i}{\cosh \bar  \th_i} m_i \sinh 2 \bar\th_i  \\
 &\ge  \frac{u_I}{\cosh \bar  \th_I}  \sum_{i=1}^I m_i \sinh 2 \bar\th_i  + \frac{u_{I+1}}{\cosh \bar  \th_{I+1}}  \sum_{i=I+1}^n m_i \sinh 2 \bar\th_i  \\
 &\ge\left(\frac{u_{I+1}}{\cosh \bar  \th_{I+1}}  -  \frac{u_I}{\cosh \bar  \th_I} \right)  \sum_{i=I+1}^n m_i \sinh 2 \bar\th_i  \\
 &\ge 0. 
\end{align*}
Moreover, the product is zero  only if $\frac{u_1}{\cosh \bar  \th_1}= \cdots = \frac{u_n}{\cosh \bar  \th_n}$, which is impossible. Hence, $(\u,  \c_2) >0$, and there is no other eigenvectors than $\c_2$ of $A$ in $K$.

     	The boundary $\partial K$ consists of points for which one or more equalities hold. However, except for the origin, at least one inequality must hold (otherwise $\u=t\c_1$). 
     	Consider a boundary point with 
     	\[ \frac{u_1}{\cosh \bar  \th_1}\le \cdots \frac{u_i}{\cosh \bar  \th_i}=\cdots = \frac{u_j}{\cosh \bar  \th_j}\le \cdots \le \frac{u_n}{\cosh \bar  \th_n}. \]
     	Let $g(\u)=\frac{ u_j}{\cosh \bar  \th_j}-\frac{ u_i}{\cosh \bar  \th_i}$. Then $g=0$
     	at this point, and $g$ is positive  in $K$.  
     	To prove that at this point  the flow is pointing inwards, see Figure \ref{fig:conley},  we show $L_Yg =\frac{ \dot{u}_j}{\cosh \bar  \th_j}-\frac{ \dot{u}_i}{\cosh \bar  \th_i}>0$. Direct computation shows that $\frac{ \dot{u}_j}{\cosh \bar  \th_j}-\frac{ \dot{u}_i}{\cosh \bar  \th_i}$ is
     	\begin{equation*} 
     	\begin{split}
     	&\sum _{k=1,k\ne j}^n \frac{m_k}{\cosh \bar  \th_j\sinh^3 d_{kj}}\left(u_k- \frac{u_j \cosh \bar  \th_k}{\cosh \bar  \th_j}\right)- \sum _{k=1,k\ne i}^n \frac{m_k}{\cosh \bar  \th_i\sinh^3 d_{ki}}\left(u_k- \frac{u_i \cosh \bar  \th_k}{\cosh \bar  \th_i}\right)\\
     	&= \sum _{k=1,k\ne i,j}^n   m_k\left(\frac{u_k}{\sinh^3 d_{kj} \cosh \bar  \th_j}- \frac{u_j \cosh \bar  \th_k}{\cosh^2 \bar  \th_j\sinh^3 d_{kj}} - \frac{u_k}{\sinh^3 d_{ki} \cosh \bar  \th_i}+ \frac{u_i \cosh \bar  \th_k}{\cosh^2 \bar  \th_i\sinh^3 d_{ki}}      \right) \\
     	& + \frac{m_i}{\sinh^3 d_{ij}\cosh \bar  \th_j}\left(u_i - \frac{u_j \cosh \bar  \th_i}{\cosh \bar  \th_j}\right)-\frac{m_j}{\sinh^3 d_{ij}\cosh \bar  \th_i}\left(u_j - \frac{u_i \cosh \bar  \th_j}{\cosh \bar  \th_i}\right).
     	\end{split}
     	\end{equation*}
     	Since $\frac{u_i}{\cosh \bar  \th_i} = \frac{u_j}{\cosh \bar  \th_j}$, the last two terms are zero, and the expression $ \frac{u_j \cosh \bar  \th_k}{\cosh^2 \bar  \th_j\sinh^3 d_{kj}}$ in the first part can be written as $\frac{u_i \cosh \bar  \th_k}{\cosh \bar  \th_i\cosh \bar  \th_j\sinh^3 d_{kj}}$. Then $L_Yg$ can be written as 
     	\[ \sum _{k=1,k\ne i,j}^n   m_k \left(  u_k-\frac{u_i \cosh \bar  \th_k}{ \cosh \bar  \th_i}  \right) \left(  \frac{1}{\sinh^3 d_{kj} \cosh \bar  \th_j}-\frac{1}{\sinh^3 d_{ki} \cosh \bar  \th_i} \right).\]
     	Every term in this expression  is non-negative by Proposition  \ref{prop:dis_ine_h1}. Proposition  \ref{prop:dis_ine_h1}  will be given after this proof:
     	 \begin{equation}\label{equ:h2ind}
     	\begin{cases}
     	  {\rm  If}\ k<i, \  {\rm  then} \   u_k-\frac{u_i \cosh \bar  \th_k}{ \cosh \bar  \th_i}  \le 0,   \frac{1}{\sinh^3 d_{kj} \cosh \bar  \th_j}-\frac{1}{\sinh^3 d_{ki} \cosh \bar  \th_i} <0.\cr
     {\rm  If}\ i\le k\le  j, \  {\rm  then} \   u_k-\frac{u_i \cosh \bar  \th_k}{ \cosh \bar  \th_i} =0. \cr 
      {\rm  If}\ j< k, \  {\rm  then} \ u_k-\frac{u_i \cosh \bar  \th_k}{ \cosh \bar  \th_i}  \ge 0,   \frac{1}{\sinh^3 d_{kj} \cosh \bar  \th_j}-\frac{1}{\sinh^3 d_{ki} \cosh \bar  \th_i} >0. 
     	\end{cases}
     	\end{equation}	
     	Moreover, at least one term is strictly positive since at least one inequality in the definition of the cone must hold.  Thus,  we have proved that on the boundary of the cone  the flow is pointing inwards, or, 
     all the other eigenvalues of $A$ are smaller than $2\bar  \lambda$.

      Hence,  the eigenvalues of $A-2\bar  \lambda$ are 
     	$-2\bar  \lambda>0, 0,  \lambda_3<0, \cdots,  \lambda_n<0.  $  Then, the inertia of $A-2\bar {\lambda }$ is $ (n_0, n_+, n_-)= (1, 1, n-2)$, so is the inertia of $\mathcal H_2$, on the space  $V_2$.   On the space $V_2/S^1$, obviously, the inertia is $ (n_0, n_+, n_-)= (0, 1, n-2)$. Combined with the inertia of $\mathcal H_1$, we conclude that the inertia of $U_{-1}|_{S^-_c/S^1}$ is  
     	 \[ (0, n-1, 0)+ (0, 1, n-2)=(0, n, n-2).  \]
    This completes the proof of Theorem \ref{thm:cc_h1&ind}.   
 \end{proof}

     	 \begin{proposition} \label{prop:dis_ine_h1}
     		If  $ \th_1< \th_2<\cdots <\th_n$,  then the following inequalities hold. 
     		\begin{enumerate}
     			\item If $k<i<j$, then $\sinh^3 (\th_j-\th_k) \cosh \th_j-\sinh^3 (\th_i-\th_k) \cosh \th_i>0$. 
     			\item If $i<j<k$, then $\sinh^3 (\th_k-\th_i)\cosh \th_i- \sinh^3 (\th_k-\th_j)\cosh \th_j>0$.
     		\end{enumerate}
     	\end{proposition}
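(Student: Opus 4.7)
The plan is to reduce each of the two inequalities to a single-variable monotonicity statement, treating $\th_k$ as a fixed parameter. For part (1), since $\th_k<\th_i<\th_j$, I would define
\[ f(\th) := \sinh^3(\th-\th_k)\,\cosh\th \quad \text{on } (\th_k,\infty), \]
so that the claim becomes $f(\th_j)>f(\th_i)$, which follows from $f'(\th)>0$. Direct differentiation yields
\[ f'(\th) = \sinh^2(\th-\th_k)\bigl[\,3\cosh(\th-\th_k)\cosh\th+\sinh(\th-\th_k)\sinh\th\,\bigr], \]
and I would simplify the bracket using the product-to-sum identities
\[ \cosh a\cosh b=\tfrac12[\cosh(a+b)+\cosh(a-b)], \quad \sinh a\sinh b=\tfrac12[\cosh(a+b)-\cosh(a-b)] \]
with $a=\th-\th_k$, $b=\th$. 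The cross terms collapse cleanly to $2\cosh(2\th-\th_k)+\cosh\th_k$, which is manifestly positive.

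For part (2), since $\th_i<\th_j<\th_k$, I would instead set
\[ g(\th) := \sinh^3(\th_k-\th)\,\cosh\th \quad \text{on } (-\infty,\th_k), \]
and show $g'(\th)<0$, which gives $g(\th_i)>g(\th_j)$. The derivative is
\[ g'(\th) = \sinh^2(\th_k-\th)\bigl[-3\cosh(\th_k-\th)\cosh\th+\sinh(\th_k-\th)\sinh\th\bigr], \]
and applying the same product-to-sum identities with $a=\th_k-\th$, $b=\th$ reduces the bracket to $-\bigl(2\cosh(\th_k-2\th)+\cosh\th_k\bigr)$, which is manifestly negative.

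The only nontrivial step is the algebraic collapse of the bracketed expressions, which is routine once the product-to-sum identities are invoked; no substantive obstacle is expected. An alternative, slightly slicker route would be to observe that $2\cosh a\cosh b\pm 2\sinh a\sinh b=\cosh(a\mp b)+\cosh(a\pm b)$ directly, but the identity-based approach above is the cleanest to present.
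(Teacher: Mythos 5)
Your proposal is correct and follows essentially the same route as the paper: reduce each inequality to the strict monotonicity of $\sinh^3(\th-\th_k)\cosh\th$ in $\th$ (the paper writes this as $h(x)=\sinh^3(x-\th_k)\cosh x-\sinh^3(\th_i-\th_k)\cosh\th_i$ with $h(\th_i)=0$ and $h'>0$, and omits part (2) as similar). Your simplified bracket $2\cosh(2\th-\th_k)+\cosh\th_k$ agrees with the paper's $\cosh(2x-\th_k)+2\cosh(x-\th_k)\cosh x$, so the computations match.
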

     	\begin{proof}
Let
     		$ h(x)=\sinh^3 (x-\th_k) \cosh x-\sinh^3 (\th_i-\th_k) \cosh \th_i$ defined for $ x\ge \th_i.$
     		Then $h(\th_i)=0$, and
     		\begin{equation*} 
     		h'(x)
     		=\sinh^2 (x-\th_k)[ \cosh ( 2x-\th_k) +2\cosh (x-\th_k) \cosh x  ]>0.
     		\end{equation*}
     	This implies the first inequality. We omit the proof of the second one since it is similar.  
     	\end{proof}

  \subsection{ The  $\S^2$ case}    
  We omit the proof of the following result since it  is  similar to that of Proposition \ref{pro:h2lam}.

 \begin{proposition}\label{pro:s2lam}
 	Let $\q$ be an ordinary central configuration on  $\S^2$.  If $z_i>0$ for  all particles.  Then the multiplier  is negative. 
 \end{proposition}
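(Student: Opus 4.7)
The plan is to mirror the proof of Proposition \ref{pro:h2lam} with the natural sign swaps. In the hyperbolic case, the key was to pick the particle maximizing the coordinate $w$ that is orthogonal to the plane of $I_{-1}$; here on $\S^2 = \S^2_{xyz}$ (so all $w_i = 0$), the analogous role is played by the coordinate $z$, and I would instead pick the particle with the \emph{smallest} $z$-value. The assumption that $z_i > 0$ for all $i$ is exactly what allows this choice to be meaningful.

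First, I would parameterize locally by $(x_i, y_i)$ with $z_i = \sqrt{1 - x_i^2 - y_i^2} > 0$. A computation completely analogous to the derivation of \eqref{equ:ree-xyzh3} gives the central configuration equation
\[
\sum_{j\ne i} m_j\, \frac{\u_j - \frac{z_j}{z_i}\u_i}{\sin^3 d_{ij}} \;=\; 2\lambda\, \u_i, \qquad \u_i := (x_i, y_i),
\]
using $\partial z_i/\partial x_i = -x_i/z_i$, $\partial z_i/\partial y_i = -y_i/z_i$, and $\partial I_1/\partial x_i = 2m_i x_i$, $\partial I_1/\partial y_i = 2m_i y_i$.

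Next, I would select an index $n$ with $z_n = \min_i z_i$. Since $\q$ is an ordinary central configuration it is not the total collision at $(0,0,1,0)$, so at least one $z_i < 1$, hence $z_n < 1$ and $\u_n \ne 0$. Taking the Euclidean inner product of the CC equation at particle $n$ with $\u_n$, and using $\u_j \cdot \u_n = x_j x_n + y_j y_n = \cos d_{nj} - z_n z_j$ together with $|\u_n|^2 = 1 - z_n^2$, the right side becomes $2\lambda(1 - z_n^2)$ and the left side collapses to
\[
\sum_{j\ne n} m_j\, \frac{\cos d_{nj} - z_j/z_n}{\sin^3 d_{nj}}.
\]
Since $z_j/z_n \ge 1 \ge \cos d_{nj}$ for every $j \ne n$, each summand is nonpositive, and at least one is strictly negative (otherwise every $z_j = z_n$ and every $d_{nj} = 0$, forcing a total collision). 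Because $1 - z_n^2 > 0$, we conclude $\lambda < 0$.

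There is no real obstacle here beyond bookkeeping the sign changes relative to the hyperbolic case ($\cosh \to \cos$, $\sinh \to \sin$, $w_n^2 - 1 \to 1 - z_n^2$), and checking that the hypothesis $z_i > 0$ together with non-collision gives the strict inequality $z_n < 1$ needed to divide by $1 - z_n^2$.
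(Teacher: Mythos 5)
Your proof is correct and is exactly the adaptation the paper has in mind: the paper omits the proof of Proposition \ref{pro:s2lam} precisely because it is ``similar to that of Proposition \ref{pro:h2lam}'', and your version mirrors that argument with the right sign bookkeeping. In particular, choosing the particle with \emph{smallest} $z$ (so that $z_j/z_n \ge 1 \ge \cos d_{nj}$, with strict inequality since $d_{nj}>0$) is the correct analogue of choosing the largest $w$ in the hyperbolic case, and your checks that $z_n<1$ and hence $1-z_n^2>0$ close the argument.
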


We use an angle coordinate system for $\{(x,y,z)\in \S^2|   z>0 \}$,  $(\th , \vp)$,  with  $-\frac{\pi}{2}<\th<\frac{\pi}{2}, -\frac{\pi}{2}<\vp<\frac{\pi}{2}$.   The relationship between Cartesian coordinates $(x,y,z)$ and $(\th , \vp)$ is   
\[  (x,y,z)=(\sin \th, \cos \th \sin\vp,\cos \th \cos \vp).\] 
Then  $\S^1=\S^1_{xz}$ is parameterized by $(\th, 0)$ and  $(\S^2)^n$ is parameterized by  $(\th_1, ..., \th_n, \vp_1, ..., \vp_n)$, and the momentum of inertia is 
 \[ I_1(\q) =    \sum m_i( \sin^2 \th+ \cos^2 \th \sin\vp ).      \] 
 
 \begin{proof}[Proof of Theorem \ref{thm:cc_s1&ind}]
 	 For the number of geodesic  ordinary central configurations,  we apply  the argument used in the proof of Theorem \ref{thm:cc_h1&ind}.   All arguments run well except showing  that the Hessian of $U_1$ restricted on  $\mathcal M_c\cap (\S^1)^n$
  at each critical point  is positive definite. By direct computation, we obtain  the Hessian 
\begin{align*}
 & 2\begin{bmatrix}
 	\sum\limits_{\substack{j=1, j\ne 1}}^n \frac{m_1m_j\cos d_{1j}}{\sin ^3d_{1j}} & -\frac{m_1m_2\cos d_{12}}{\sin ^3d_{12}}& \cdots& -\frac{m_1m_n\cos d_{1n}}{\sin ^3d_{1n}}\\
 	-\frac{m_2m_1\cos d_{12}}{\sin ^3d_{12}}&  \sum\limits_{\substack{j=1,j\ne 2}}^n \frac{m_2m_j\cos d_{2j}}{\sin ^3d_{2j}}&\cdots& -\frac{m_2m_n\cos d_{2n}}{\sin ^3d_{2n}}\\
 	\cdots&  \cdots& \cdots& \cdots\displaybreak[0]\\
 	-\frac{m_1m_n\cos d_{1n}}{\sin ^3d_{1n}}&\cdots&\cdots&\sum\limits_{\substack{j=1, j\ne n}}^n \frac{m_nm_j\cos d_{nj}}{\sin ^3d_{nj}}
 \end{bmatrix}\\
 &-2\lambda \begin{bmatrix}
 	m_1 \cos 2\theta_1&0&\cdots&0\\
 	0&m_2 \cos 2\theta_2&\cdots&0\\
 	\cdots&  \cdots& \cdots& \cdots\\
 	0&\cdots &\cdots &m_n \cos 2\theta_n\
 \end{bmatrix}.
\end{align*}

Restricting  $c<\frac{1}{2} m_1$. Then   $m_i \sin^2 \th < \frac{1}{2} m_1$, 
$-\frac{\pi}{4} < \th_i <\frac{\pi}{4}$ for all $i$. Hence,  $\cos 2\th_i >0, d_{ij} <\frac{\pi}{2}$.  The second part  is positive definite since $\lambda <0$. 
Each  element of the first matrix not on the diagonal  is   negative.  By   the argument used in the proof of Theorem \ref{thm:cc_h1&ind} (part I), we  see that the first matrix is positive  semi-definite. Thus, the Hessian of  $U_1|_{\mathcal M_c\cap (\S^1)^n}$ is positive definite and  there are exactly $n!/2$ geodesic  ordinary central configurations  on $\mathcal M_c$ provided $c<\frac{m_1}{2}$.

  For the Morse index of  the geodesic  ordinary central configurations,  we need to  restrict further  $c<\frac{1}{4} m_1$, which leads to  $m_i \sin^2 \th < \frac{1}{4} m_1$, 
  $-\frac{\pi}{6} < \th_i <\frac{\pi}{6}$ for all $i$.  
   Then all the argument  used in the proof of Theorem \ref{thm:cc_h1&ind}  works if we  replace the  hyperbolic functions with the  trigonometrical ones. Especially, the inequalities \eqref{equ:h2ind} are replaced by the  following inequalities. 
  
   \begin{proposition} \label{prop:dis_ine_s1}
  	If  $-\frac{\pi}{6} < \th_1< \cdots <\th_n<\frac{\pi}{6}$, then   the following inequalities hold. 
  	\begin{enumerate}
  		\item If $k<i<j$, then $\sin^3 (\th_j-\th_k) \cos \th_j-\sin^3 (\th_i-\th_k) \cos \th_i>0$.
  		\item If $i<j<k$, then $\sin^3 (\th_k-\th_i)\cos \th_i- \sin^3 (\th_k-\th_j)\cos \th_j>0$.
  	\end{enumerate}
  \end{proposition}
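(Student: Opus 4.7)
The plan is to mirror the proof of Proposition~\ref{prop:dis_ine_h1} almost verbatim, replacing hyperbolic functions by trigonometric ones, and then use the extra hypothesis $\th_i \in (-\pi/6, \pi/6)$ to control the sign of the resulting derivative, which for hyperbolic functions was automatic.

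For part (1), with $k < i < j$ and $\th_k$ fixed, I would define
\[
h(x) = \sin^3(x-\th_k)\cos x - \sin^3(\th_i-\th_k)\cos \th_i, \qquad x \ge \th_i,
\]
so that $h(\th_i)=0$ and the desired inequality is $h(\th_j)>0$. A direct differentiation gives
\[
h'(x) = \sin^2(x-\th_k)\bigl[\,3\cos(x-\th_k)\cos x - \sin(x-\th_k)\sin x\,\bigr],
\]
and expanding the bracket via product-to-sum identities rewrites it as $2\cos(2x-\th_k) + \cos\th_k$, the exact trigonometric analogue of $2\cosh(2x-\th_k)+\cosh\th_k$ from the hyperbolic case. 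For part (2), the same calculation applied to $\tilde h(x) = \sin^3(\th_k-x)\cos x$ produces $\tilde h'(x) = -\sin^2(\th_k-x)\bigl[\,2\cos(\th_k-2x)+\cos\th_k\,\bigr]$, so monotonicity reduces to the same kind of positivity check.

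The main (and only real) obstacle is to verify that $2\cos(2x-\th_k)+\cos\th_k > 0$ (resp.\ $2\cos(\th_k-2x)+\cos\th_k > 0$) throughout the range of interest. This is where the hypothesis $-\pi/6 < \th_r < \pi/6$ enters decisively: from $\th_k, x \in (-\pi/6,\pi/6)$ one gets $|2x-\th_k| < 2\cdot\pi/6 + \pi/6 = \pi/2$, so $\cos(2x-\th_k)>0$, and similarly $|\th_k-2x|<\pi/2$; since also $\cos\th_k > \cos(\pi/6) > 0$, both bracketed expressions are strictly positive. This is precisely the point at which one sees why the restriction $c < m_1/4$ in Theorem~\ref{thm:cc_s1&ind} (which forces $|\th_i| < \pi/6$) is built into the statement.

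Having established $h'(x)>0$ on $(\th_i, \pi/6)$ (resp.\ $\tilde h'(x)<0$ on $(-\pi/6, \th_k)$), the conclusions $h(\th_j) > h(\th_i) = 0$ and $\tilde h(\th_i) > \tilde h(\th_j)$ follow immediately from the mean value theorem, giving the two inequalities. The proof of (2) can be omitted or simply noted to be analogous to that of (1), as in the hyperbolic case.
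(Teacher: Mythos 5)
Your proposal is correct and follows essentially the same route as the paper: define $h(x)=\sin^3(x-\th_k)\cos x-\sin^3(\th_i-\th_k)\cos\th_i$, differentiate, and use the restriction $|\th_r|<\pi/6$ to force the bracketed factor (which the paper writes as $\cos(2x-\th_k)+2\cos(x-\th_k)\cos x$, equal to your $2\cos(2x-\th_k)+\cos\th_k$) to be positive. Your version merely makes explicit the positivity check that the paper leaves as an assertion.
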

  \begin{proof}	
  We only prove the first inequality. Let
  	$ h(x)=\sin^3 (x-\th_k) \cos x-\sin^3 (\th_i-\th_k) \cos \th_i$ defined for $ x\ge \th_i.$
  	Then $h(\th_i)=0$, and
  	\begin{equation*} 
  	h'(x)
  =\sin^2 (x-\th_k)[ \cos ( 2x-\th_k) +2\cos (x-\th_k) \cos x  ]>0.
  	\end{equation*}
  	This proves the first inequality. 
  \end{proof} 
  
 This completes the proof of Theorem \ref{thm:cc_s1&ind}.   
 \end{proof}

\section*{Appendix: The  relative  equilibria and  central configurations}

Recall that $\S_{xy}^1:=\{(x,y,z,w) \in \S^3: z=w=0\}$, $\S_{zw}^1:=\{(x,y,z,w) \in \S^3: x=y=0\}$. Recall that the critical points of $U_1$ are \emph{special central configurations}, the critical points of $U_1-\lambda I_1, U_{-1}-\lambda I_{-1}$ that are not special central configurations  are  \emph{ordinary  central configurations}. The motions in the form of $\q(t)=\q(0)$ are called \emph{equilibria}, and the motions in the form of $\exp(t\xi)\q$  ($\xi\ne 0$) are called  \emph{relative  equilibria}.

		\begin{proposition}[\cite{DSZ18}] \label{pro:re_cc}
			\begin{itemize}
				\item 	Let $\q=(\q_1,\dots,\q_n)\in (\H^3)^n$ be 
				an ordinary central configuration  with multiplier $\lambda$. Then  the associated  relative equilibria are 	
				$B_{\alpha, \beta}(t)\q$ with $\a=\sqrt{-2\lambda}\cos s, \b=\sqrt{-2\lambda}\sin s, s\in (0, 2\pi]$.
		\item 		Let $\q=(\q_1,\dots,\q_n)\in (\S^3)^n$ be a special central configuration.  Then  the associated equilibrium  is $\q(t)=\q$. The associated 
		relative equilibria are 
			\begin{itemize} 
			\item
		$A_{\alpha, \beta}(t)\q$ with $\a,\b \in \R$ if   all particles are on  $\S^1_{xy}\cup \S^1_{zw}$; 
		\item $A_{\alpha, \beta}(t)\q$ with $\b=\pm \a, \a\in \R$, if  not all particles are on $\S^1_{xy}\cup \S^1_{zw}$.
			\end{itemize}
		\item 	Let $\q=(\q_1,\dots,\q_n)\in (\S^3)^n$ be an ordinary central configuration.  The associated 
		relative equilibria are
			\begin{itemize} 
				\item  $A_{\alpha, \beta}(t)\q$ with $\a=\sqrt{2\lambda}\sinh s, \b=\sqrt{2\lambda}\cosh s,$   $s\in \R$, if  $\lambda >0$;
				\item  $A_{\alpha, \beta}(t)\q$ with $\a=\sqrt{-2\lambda}\cosh s, \b=\sqrt{-2\lambda}\sinh s$,   $ s\in \R$,  if  $\lambda <0$. 
			\end{itemize}
			\end{itemize}
		\end{proposition}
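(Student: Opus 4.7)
The plan is to apply Theorem \ref{thm:recc} directly: by that criterion, $A_{\alpha,\beta}(t)\q$ (resp.\ $B_{\alpha,\beta}(t)\q$) is a relative equilibrium if and only if $\q$ is a critical point of $U_1+\tfrac{\alpha^2-\beta^2}{2}I_1$ (resp.\ $U_{-1}+\tfrac{\alpha^2+\beta^2}{2}I_{-1}$). Given a central configuration, the task therefore reduces to solving one algebraic equation in $(\alpha,\beta)$ obtained by matching multipliers, and the three bullets of the proposition correspond to the three possible algebraic situations.

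For an ordinary central configuration $\q$ in $\H^3$, the multiplier $\lambda$ is uniquely determined by $\nabla U_{-1}(\q)=\lambda\nabla I_{-1}(\q)$ because $\nabla I_{-1}(\q)\ne 0$, and the matching condition is $\alpha^2+\beta^2=-2\lambda$. I would then establish $\lambda<0$ either by combining Theorem \ref{thm:h2cc} with Proposition \ref{pro:h2lam} (after noting that the reducing isometry preserves both $U_{-1}$ and $I_{-1}$, hence the multiplier), or by directly pairing the central-configuration equation \eqref{equ:ree-xyzh3} with $(x_i,y_i,0)$, as in the proof of Proposition \ref{pro:h2cccomp_lam}. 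The stated parameterization is then the standard polar parameterization of the circle of radius $\sqrt{-2\lambda}$.

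For $\q$ a special central configuration in $\S^3$, $\nabla U_1(\q)=0$ immediately gives the equilibrium $\q(t)\equiv\q$, and $A_{\alpha,\beta}(t)\q$ is a relative equilibrium if and only if $\tfrac{\alpha^2-\beta^2}{2}\nabla I_1(\q)=0$. The key algebraic observation is that on $(\S^3)^n$, $\nabla I_1(\q)=0$ exactly when every particle lies on $\S^1_{xy}\cup\S^1_{zw}$, since the $T\S^3$-projected gradient of $x^2+y^2$ at $(x,y,z,w)$ is a nonzero multiple of $(x,y,-(x^2+y^2)z/(z^2+w^2)\cdot\text{stuff})$ and vanishes precisely on those two great circles. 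Hence any $(\alpha,\beta)$ works in that case, and otherwise we must have $\alpha=\pm\beta$. For an ordinary $\S^3$-configuration, $\lambda\ne 0$ is again uniquely determined, and the condition $\alpha^2-\beta^2=-2\lambda$ defines a hyperbola whose principal branch is parameterized exactly as stated, with the cases $\lambda>0$ and $\lambda<0$ distinguished by which of $\alpha,\beta$ is the ``cosh coordinate''.

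The main obstacle will be the sign analysis of $\lambda$ in the $\H^3$ case (to ensure $-2\lambda\ge 0$ so that the circle parameterization makes sense), together with verifying that the reduction in Theorem \ref{thm:h2cc} preserves the multiplier; beyond these small geometric checks, the rest of the proof amounts to routine case-by-case bookkeeping dictated by Theorem \ref{thm:recc}.
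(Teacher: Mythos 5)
Your proposal is correct and follows exactly the route the paper intends: the proposition is quoted from \cite{DSZ18} without proof here, but the derivation is precisely the multiplier-matching the paper sketches right after Theorem \ref{thm:recc} (solving $\lambda=-\tfrac{\alpha^2\mp\beta^2}{2}$, with the sign of $\lambda$ controlled by Proposition \ref{pro:h2lam} in the hyperbolic case and the degenerate case $\nabla I_1=0$ accounting for the $\S^1_{xy}\cup\S^1_{zw}$ dichotomy). The only blemish is your garbled formula for the tangential gradient of $x^2+y^2$ on $\S^3$, but its stated vanishing locus is correct, so nothing essential is missing.
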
  
		
		For one ordinary central configuration, among  the set of associated relative  equilibria, there are  periodic ones (in $H^3$,   $\b=0$;  in  $S^3$, $\a +k\b=0$ for some $k\in \Z$), and quasi-periodic ones (in $H^3$, none; in  $S^3$, $\a +k\b\ne0$ for any $k\in \Z$).

		In  Newtonian $n$-body problem, the  relative equilibrium is always planar, while in curved $n$-body problem, the set of  relative equilibria has richer structure. We  divide them into three classes, the geodesic ones, the 2-dimensional   ones,  and the 3-dimensional  ones. 	 A \emph{geodesic relative  equilibrium} is one with all particles on the same geodesic for all $t$; a \emph{2-dimensional  relative  equilibrium} is one  with all particles on the same 2-dimensional  great sphere for all $t$ but not on a same geodesic;   the others  are  \emph{3-dimensional   relative equilibria}. 
		
		If a  $k$-dimensional  relative  equilibrium  is associated with a $m$-dimensional 
		 configuration, then $k\ge m$. 
		Let $Q(t)\q$ be one relative  equilibrium. Then it is a geodesic one if  $\q$ is on a   geodesic  and $Q(t)$ keeps that geodesic;  it is a  2-dimensional one   if $\q$ is on a  2-dimensional  great sphere ($\q$ may be a geodesic one in that sphere),  
		and $Q(t)$ keeps that 2-dimensional  great sphere.

	Let $G_1=SO(2)\times SO^+(1,1)$ and $G_2=SO(2)\times SO(2)$.   
		 Assume that  $\q$ is an ordinary central configuration in $\H^3$ (resp. $\S^3$). 
		Then $g\q$ is an ordinary central configuration with the same multiplier if $g$ is in $G_1$ (resp. $G_2$).  If $B_{\a, \b}(t)\q$ (resp. $A_{\a, \b}(t)\q$) are   relative equilibria associated with $\q$, then the  relative equilibria associated with $g\q$ are 
		\[  B_{\a, \b}(t)g\q=gB_{\a, \b}(t)\q, \ ({\rm resp.} \  A_{\a, \b}(t)g\q=gA_{\a, \b}(t)\q).    \]	
	Let $\tau$ be the isometry in $O(4)$, $\tau (x, y, z,w)=(z, w, x, y)$. By Theorem \ref{thm:s2cc}, if   $\q$ is an ordinary central configuration in $\S^3$  with  multiplier $\lambda$, then the multiplier of $\tau \q$ is $-\lambda$.  If $A_{\a, \b}(t)\q$ are   relative equilibria associated with $\q$, then the   solutions associated with $\tau\q$ are
		\[    A_{\b, \a}(t)\tau\q=\tau A_{\a, \b}(t)\q.     \]
	
		Thus,  thanks to  Theorem \ref{thm:s2cc} and Theorem \ref{thm:h2cc}, to find  geodesic and   2-dimensional   relative equilibria,  it is enough to assume that the  associated  ordinary central configuration lies on $\H^2_{xyw}$ for the $\H^3$ case, and  on  $\S^2_{xyz}$ with negative multiplier for the $\S^3$ case.

		\begin{proposition}
			\label{pro:h3re} 
			Consider  the curved $n$-body problem in $\H^3$. Let $G_1=SO(2)\times SO^+(1,1)$. 
			\begin{itemize}
				\item 	There is no geodesic relative equilibrium.
				\item Any 2-dimensional   relative equilibria must be in one of the following  three  forms: 
				\begin{itemize}
					\item $gB_{\pm\sqrt{-2\lambda}, 0}(t)\q$ and $gB_{0, \pm\sqrt{-2\lambda}}(t)\q$ for $\q$ being a geodesic ordinary central configuration on $\H^1_{xw}$ with multiplier $\lambda$;
					\item $gB_{\pm\sqrt{-2\lambda}, 0}(t)\q$  for $\q$ being a 2-dimensional  ordinary central configuration on $\H^2_{xyw}$ with multiplier $\lambda$,
				\end{itemize}
				where $g$  is some isometry in $ G_1$.  
				\item Any  other  relative equilibrium is 3-dimensional . 
			\end{itemize}
		\end{proposition}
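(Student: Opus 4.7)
The plan is to combine the explicit description of relative equilibria in $\H^3$ from Proposition~\ref{pro:re_cc} with the normal forms of Theorem~\ref{thm:h2cc}, and to determine the dimensional type of each orbit by computing the dimension of the linear span
\[
V(\alpha,\beta,\q) := \operatorname{span}_{\R}\bigl\{B_{\alpha,\beta}(t)\q_i : t\in\R,\ i=1,\dots,n\bigr\}\subset \R^{3,1}.
\]
Since a $k$-dimensional great sphere of $\H^3$ is exactly the intersection of $\H^3$ with a $(k{+}1)$-dimensional linear subspace, the orbit is geodesic, 2-dimensional, or 3-dimensional precisely when $\dim V = 2$, $3$, or $4$ respectively. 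By the $G_1$-equivariance recorded just before the proposition, I may place $\q$ on $\H^1_{xw}$ in the geodesic case and on $\H^2_{xyw}$ in the 2-dimensional case, and then $B_{\alpha,\beta}(t)\q$ is computed explicitly.

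For the first bullet, a geodesic relative equilibrium would be supported on a fixed geodesic $\gamma\subset\H^3$; since any two distinct particles determine $\gamma$, the isometry $B_{\alpha,\beta}(t)$ must preserve $\gamma$ as a set. An eigenspace analysis of the generator shows that, for $(\alpha,\beta)\ne(0,0)$, the only real $2$-plane of $\R^{3,1}$ invariant under $B_{\alpha,\beta}$ that meets $\H^3$ is the $zw$-plane, forcing $\gamma=\H^1_{zw}$. But no ordinary CC lies on $\H^1_{zw}$: any such configuration has $x_i=y_i=0$, so $I_{-1}(\q)=0$ and $\nabla I_{-1}(\q)=0$, and the critical-point equation reduces to $\nabla U_{-1}(\q)=0$, which is impossible on the $1$-dimensional line $\H^1_{zw}$ where the pairwise forces $1/\sinh^2 d_{ij}$ are strictly repulsive (an extreme particle cannot be in equilibrium). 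Equivalently, $G_1$ preserves the $zw$-plane and $\H^1_{zw}\cap\H^2_{xyw}=\{(0,0,0,1)\}$, so Theorem~\ref{thm:h2cc} rules out a non-collisional ordinary CC on $\H^1_{zw}$.

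For the second bullet I analyze the two standard cases. If $\q_i=(x_i,0,0,w_i)$ lies on $\H^1_{xw}$, then
\[
B_{\alpha,\beta}(t)\q_i = (x_i\cos\alpha t,\ x_i\sin\alpha t,\ w_i\sinh\beta t,\ w_i\cosh\beta t).
\]
For $\beta=0$ all particles stay in $\{z=0\}\cap\H^3=\H^2_{xyw}$, and the tangent vectors $(0,x_i\alpha,0,0)$ adjoin $e_2$ to the initial span $\{e_1,e_4\}$, so $\dim V=3$ and the orbit is 2-dimensional. The case $\alpha=0$ is symmetric, placing the orbit in $\H^2_{xzw}$. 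For $\alpha\beta\ne 0$ the tangent vectors $(0,x_i\alpha,w_i\beta,0)$ span the entire $(e_2,e_3)$-plane, because the 2-vectors $(x_i,w_i)$ for distinct particles on $\H^1_{xw}$ must be linearly independent (which follows from $x_i^2-w_i^2=-1$ together with $w_i\ge 1$), so $\dim V=4$ and the orbit is 3-dimensional. If instead $\q_i=(x_i,y_i,0,w_i)$ is a genuinely 2-dimensional CC on $\H^2_{xyw}$ (so $(x_i,y_i)$ are not collinear through the origin), then $\beta=0$ keeps the orbit in $\H^2_{xyw}$ with $\dim V=3$, while any $\beta\ne 0$ brings in the direction $e_3$ and raises $\dim V$ to $4$. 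Acting by a general $g\in G_1$ produces the listed families, and the third bullet follows by exhausting these cases.

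The main obstacle is the dimension count in the mixed case $\alpha\beta\ne 0$. It hinges on two nondegeneracy properties of ordinary central configurations: distinct particles cannot have proportional $(x_i,w_i)$ on $\H^1_{xw}$, and a 2-dimensional CC on $\H^2_{xyw}$ has $(x_i,y_i)$ not collinear through the origin. Both facts follow from the hyperboloid defining equation together with the very definition of the dimensional type, after which the classification reduces to the coordinate computation above.
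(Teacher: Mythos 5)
Your proposal is correct, and its skeleton is the same as the paper's: both use Theorem \ref{thm:h2cc} together with the equivariance $B_{\alpha,\beta}(t)g\q = gB_{\alpha,\beta}(t)\q$ for $g\in G_1$ to reduce to configurations on $\H^1_{xw}$ or $\H^2_{xyw}$, and then classify by a case analysis on $(\alpha,\beta)$. The execution differs in two places. For the nonexistence of geodesic relative equilibria, the paper notes that $B_{\alpha,\beta}(t)$ preserves $\H^1_{xw}$ only when $\alpha=\beta=0$, which is excluded since Proposition \ref{pro:h2lam} gives $\lambda<0$ and hence $\alpha^2+\beta^2=-2\lambda>0$; you instead classify the $B_{\alpha,\beta}$-invariant $2$-planes of $\R^{3,1}$, find that only the $zw$-plane meets $\H^3$, and then exclude ordinary central configurations on $\H^1_{zw}$ (where $\nabla I_{-1}=0$, so such a configuration would be a critical point of $U_{-1}$, impossible by the extreme-particle argument). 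Both routes work; yours avoids invoking Proposition \ref{pro:h2lam}, at the cost of the eigenspace analysis. Second, your span-dimension invariant $\dim V\in\{2,3,4\}$ turns the last bullet into an explicit computation --- in particular the check that distinct particles on $\H^1_{xw}$ have linearly independent pairs $(x_i,w_i)$, so that $\alpha\beta\ne0$ forces $\dim V=4$ --- where the paper disposes of it by elimination. The only blemish is terminological: the cotangent interaction is attractive, not repulsive, but your extreme-particle argument (the outermost body on a geodesic feels a nonzero net force along it, so $\nabla U_{-1}\ne 0$) is insensitive to the sign.
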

		
	\begin{proof}
	Let  $\q$ be  an ordinary central configuration on $\H^1_{xw}$ with multiplier $\lambda$. The 1-parameter  subgroup $B_{\a, \b}(t)$ keeps the geodesic $\H^1_{xw}$ only if $\a=\b=0$, 
	which is impossible since $\lambda<0$ by Proposition \ref{pro:h2lam} and $2\lambda =-(\alpha^2+\beta^2)$.
	So there is no geodesic relative equilibrium.  Obviously, the 1-parameter  subgroup $B_{\a, \b}(t)$ keeps a 2-dimensional great  sphere containing  $\H^1_{xw}$ only if $\a=0$ or $\b=0$.  If $\b=0$ (resp. $\a=0$), the associated 2-dimensional   relative equilibrium is   $B_{\pm\sqrt{-2\lambda}, 0}(t)\q$ (resp. $B_{0, \pm\sqrt{-2\lambda}}(t)\q$).
	
	Let  $\q$ be  a 2-dimensional ordinary central configuration on $\H^2_{xyw}$.
	Obviously,  the 1-parameter  subgroup $B_{\a, \b}(t)$ keeps  $\H^2_{xyw}$   only if $\b=0$.  So  2-dimensional   relative equilibria associated with $\q$ is  $B_{\pm\sqrt{-2\lambda}, 0}(t)\q$.  By the discussion before Proposition \ref{pro:h3re}, the proof is complete. 
\end{proof}

	Recall that a relative equilibrium  $B_{0, \b}(t)\q$ is  \emph{hyperbolic}. In  \cite{PS19}, P\'{e}rez-Chavela and  S\'{a}nchez-Cerritos 	consider  2-dimensional  hyperbolic relative equilibria.   They  show that if the masses are equal, the configuration of  such   relative equilibria could not be a regular polygon. In fact, those motions can be characterized as follows: 
	\begin{proposition}
	All 2-dimensional  hyperbolic relative  equilibria must be associated with  geodesic ordinary central configurations.  Given $n$ masses, there are exactly $n!$ families of  2-dimensional  hyperbolic relative equilibria, one for each ordering of  the masses along the geodesic.   
	\end{proposition}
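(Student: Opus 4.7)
The plan is to deduce both assertions directly from Proposition \ref{pro:h3re} together with Theorem \ref{thm:cc_h1&ind}. Recall that $B_{\alpha,\beta}(t)\q$ is \emph{hyperbolic} precisely when $\alpha=0$ and $\beta\neq 0$. Scanning the three forms of 2-dimensional relative equilibria listed in Proposition \ref{pro:h3re}: the two forms of shape $gB_{\pm\sqrt{-2\lambda},0}(t)\q$ (for $\q$ either geodesic or 2-dimensional) both have $\beta=0$, with $\alpha=\pm\sqrt{-2\lambda}\neq 0$ since $\lambda<0$ by Proposition \ref{pro:h2lam}, so they are elliptic; only the remaining form $gB_{0,\pm\sqrt{-2\lambda}}(t)\q$, in which $\q$ is a geodesic ordinary central configuration on $\H^1_{xw}$, has $\alpha=0$ and $\beta\neq 0$. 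Hence every 2-dimensional hyperbolic relative equilibrium must come from this last form, which immediately gives the first assertion.

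For the count, Theorem \ref{thm:cc_h1&ind} provides, for each $c>0$, exactly $n!/2$ geodesic OCC classes on $S^-_c$ modulo $SO(2)$, one for each ordering of the masses along $\H^1_{xw}$ up to reversal. Letting $c$ vary over $(0,\infty)$ produces $n!/2$ continuous 1-parameter families of geodesic OCC classes. Each OCC has multiplier $\lambda<0$, and by the hyperbolic form just identified yields two relative equilibria distinguished by the sign of $\beta=\pm\sqrt{-2\lambda(c)}$; pairing the $n!/2$ OCC families with these two sign choices gives $n!$ families of 2-dimensional hyperbolic relative equilibria. The natural bijection with the $n!$ ordered orderings is: reversing the ordering corresponds to applying $SO(2)$-rotation by $\pi$ to the configuration (which leaves the OCC class invariant) paired with flipping the sign of $\beta$.

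The main obstacle will be verifying the pairwise distinctness of these $n!$ families, which amounts to showing that the sign of $\beta$ is a genuine $G_1$-invariant: no $g\in G_1=SO(2)\times SO^+(1,1)$ can intertwine $B_{0,\beta}(t)\q$ with $B_{0,-\beta}(t)(g\q)$. The crux is that $SO^+(1,1)$, being the identity component of the Lorentz group on the $zw$-plane, contains no orientation-reversing element and in particular none conjugating the hyperbolic rotation $B_{0,\beta}$ to $B_{0,-\beta}$. Together with the distinctness of OCC classes supplied by Theorem \ref{thm:cc_h1&ind}, this confirms that the count is exactly $n!$, indexed bijectively by the ordered orderings of the masses along the geodesic.
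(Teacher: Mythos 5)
Your first assertion is handled essentially as in the paper: the paper also derives it from the classification in Proposition \ref{pro:h3re}, observing that of the three listed forms only $gB_{0,\pm\sqrt{-2\lambda}}(t)\q$ has $\alpha=0$ and $\beta\neq0$, and that its associated configuration is a geodesic ordinary central configuration. (The paper additionally gives a self-contained argument in the Poincar\'e half-plane model, showing that any critical point of the hyperbolic augmented potential $U+\frac{\alpha^2}{2}\sum m_i x_i^2/y_i^2$ must lie on a single circular geodesic $x^2+y^2=R^2$ by examining the particle on the largest such circle; you omit this, which is acceptable since the classification already covers it.)

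For the count, however, your bookkeeping has a genuine flaw. You recover $n!$ as $(n!/2)\times 2$, the second factor being the sign of $\beta$, and you assert a ``natural bijection'' in which reversing the ordering corresponds to flipping the sign of $\beta$. This is inconsistent with the group action you yourself invoke: the element of $G_1$ that identifies an ordering with its reversal is the rotation by $\pi$ in the $xy$-plane, which commutes with $B_{0,\beta}(t)$ and hence \emph{preserves} the sign of $\beta$ while reversing the ordering along $\H^1_{xw}$. Consequently a $G_1$-class of hyperbolic relative equilibria determines the sign of $\beta$ together with the ordering only modulo reversal; no invariant of the class singles out one of the two orderings (indeed the velocities are orthogonal to the geodesic, so they cannot orient it), and your correspondence is at best an equality of cardinalities, not the asserted bijection with orderings. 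The reading consistent with ``one for each ordering,'' and the one the paper's citation of Theorem \ref{thm:cc_h1&ind} actually supplies, is simpler: the proof of that theorem exhibits, for every $c>0$, exactly $n!$ critical points of $U_{-1}|_{S^-_c\cap(\H^1)^n}$ --- one on each of the $n!$ components, i.e.\ one for each ordering along $\H^1_{xw}$ --- and the $n!/2$ in its statement arises only after passing to the $SO(2)$ quotient. Taking one family per ordering, with both signs of $\beta$ attached to it, yields the $n!$ directly, without the sign of $\beta$ entering the count.
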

	\begin{proof}
	The first part is from the second statement of Proposition \ref{pro:h3re}. We  can  prove it   directly. Since the motion is 2-dimensional, we use the  Poincar\'e half plane model:  $H, (x,y), y>0, ds^2 =  \frac{dx^2 +dy^2 }{y^2}$. Then the kinetic is $K=\frac{1}{2}\sum m_i\frac{\dot x_i^2 +\dot y_i^2 }{y_i^2}$. In this model, the hyperbolic 1-parameter subgroup acts on $H$ by 
$(x, y)\mapsto e^{\a s}(x, y)$ \cite{DPV12}. Thus, the vector filed on $H^n$ generated by the  hyperbolic 1-parameter subgroup is $\xi_{H^n}(\q)=\a(x_1, y_1, ..., x_n, y_n)$. Then the augmented potential is 
\[  U+ K(\xi_{H^n}(\q))=U+\frac{\a^2 }{2}\sum_{i=1}^n m_i \frac{x_i^2 }{y_i^2} +\frac{\a^2 }{2} \sum_{i=1}^n m_i. \]
	If $\q(t)=e^{\a t} \q(0)$ is a hyperbolic relative equilibrium on $H$, then $\q(0)$ is a  critical point of the above augmented potential. That is, $\q(0)$ must satisfy the equation 
\begin{equation}\label{equ:h2rehyperb}
\nabla_{\q_i} U  = -\frac{\a^2 }{2}\nabla_{\q_i}  \sum_{i=1}^n m_i \frac{x_i^2 }{y_i^2}  =-\a^2 m_i \frac{1 }{y_i} ( x_i y_i, -x_i^2). 
\end{equation}     
We claim  that the critical points of this potential must geodesic configurations. Recall that the geodesics on $H$ are straight lines  and circles perpendicular to the $x$-axis. 
	Assume that the particles of $\q(0)$ are distributed on several circular geodesics $x^2+y^2 =R_j^2, j=1, ..., p$ and that $R_j\le R_p$ if $1\le j\le p$. Consider the equation \eqref{equ:h2rehyperb} for one particle, say $\q_1$,  on largest circle. Note that  the  right side of \eqref{equ:h2rehyperb}  is a vector  tangent to
	the largest circle, but the left side, the force exerted on $\q_1$,   is pointing inwards since there are particles on some smaller circle. This contradiction shows that  the critical points of the augmented potential has to be    geodesic configurations. 
	
	The second part is from Theorem \ref{thm:cc_h1&ind}. 
	\end{proof}

	Note that for all 2-dimensional  hyperbolic relative  equilibria, the velocities are orthogonal to the geodesic containing the configuration. This is true for any relative equilibria associated a  geodesic configuration.  By Proposition \ref{pro:h3re}, we may assume that the geodesic is $\H^1_{xw}$.  Then the velocity of the $i$-th particle  is $(0, \alpha x_i, \beta w_i, 0)$ for some $\a, \b$, which is orthogonal to the geodesic.  
		
		\begin{proposition} \label{pro:s3rec}
			For the curved $n$-body problem in $\S^3$, consider the  relative equilibria  associated with ordinary central configurations.  Let $\tau $ be the isometry $\tau (x, y, z,w)=(z, w, x, y)$ and $G_2=SO(2)\times SO(2)$.  
			\begin{itemize}
				\item 	There is no geodesic relative equilibrium.
				\item Any 2-dimensional   relative equilibria must be in one of the following  four   forms: 
				\begin{itemize}
					\item $gA_{\pm\sqrt{-2\lambda}, 0}(t)\q$ and $\tau gA_{\pm\sqrt{-2\lambda}, 0}(t)\q$ for $\q$ being a geodesic ordinary central configuration on $\S^1_{xz}$ with multiplier $\lambda<0$.
					\item $gA_{\pm\sqrt{-2\lambda}, 0}(t)\q$ and $\tau gA_{\pm\sqrt{-2\lambda}, 0}(t)\q$ for $\q$ being a 2-dimensional  ordinary central configuration on $\S^2_{xyz}$ with multiplier $\lambda<0$.
				\end{itemize}
				where $g$  is some isometry in $G_2$.  
				\item Any  other  relative equilibrium is 3-dimensional . 
			\end{itemize}
		\end{proposition}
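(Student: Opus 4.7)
The proof follows the template of Proposition \ref{pro:h3re}, with the extra wrinkle that the isometry $\tau$ swaps ordinary central configurations with multipliers $\lambda$ and $-\lambda$, so the cases $\lambda > 0$ and $\lambda < 0$ must be related through it. The plan is first to use Theorem \ref{thm:s2cc} together with the $G_2$-symmetry to normalize: any geodesic OCC may be taken to lie on $\S^1_{xz}$, any 2-dimensional OCC to lie on $\S^2_{xyz}$ (possibly after applying $\tau$, which interchanges $\S^2_{xyz}$ and $\S^2_{xzw}$), and any 3-dimensional OCC already spans $\R^4$. Since an OCC has $\lambda \ne 0$ (else it would be a special central configuration), and since applying $\tau$ replaces $(\a,\b,\lambda)$ by $(\b,\a,-\lambda)$, one may restrict to $\lambda < 0$ and use Proposition \ref{pro:re_cc} to write $\a=\sqrt{-2\lambda}\cosh s$, $\b=\sqrt{-2\lambda}\sinh s$, so $\a$ is always nonzero while $\b$ may vanish.

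The heart of the argument is to determine the smallest linear subspace $V_0\subset\R^4$ which is $A_{\a,\b}(t)$-invariant and contains $\q_1,\dots,\q_n$; the dimension of the relative equilibrium equals $\dim V_0-1$. For a configuration on $\S^1_{xz}$, nonsingularity forces at least one $x_i$ and at least one $z_j$ to be nonzero (otherwise two particles would coincide or be antipodal), so the orbits of $\q$ under rotation in the $xy$- and $zw$-planes give: $V_0=\R^2_{xz}$ only if $\a=\b=0$ (impossible); $V_0=\R^3_{xyz}$ when $\b=0$, producing a 2-dimensional RE in $\S^2_{xyz}$; and $V_0=\R^4$ when $\a\b\ne 0$, producing a 3-dimensional RE. A parallel computation for a 2-dimensional OCC on $\S^2_{xyz}$, whose particles span $\R^3_{xyz}$, shows $V_0=\R^3_{xyz}$ exactly when $\b=0$ and $V_0=\R^4$ otherwise. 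In both settings, $\b=0$ combined with $\a^2-\b^2=-2\lambda$ forces $\a=\pm\sqrt{-2\lambda}$, yielding the direct form $A_{\pm\sqrt{-2\lambda},0}(t)\q$ displayed in the proposition.

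Next, applying $\tau$ to each 2-dimensional RE produced above yields the companion family $\tau g A_{\pm\sqrt{-2\lambda},0}(t)\q$, via the identity $\tau A_{\a,\b}(t)=A_{\b,\a}(t)\tau$; these are precisely the 2-dimensional REs associated with $\tau g\q$, which is an OCC on $\S^1_{xz}$ (respectively $\S^2_{xzw}$) with multiplier $-\lambda>0$, and this accounts for all positive-multiplier cases. The first claim is then immediate from the case analysis, since no choice of $(\a,\b)$ can keep a non-singular OCC on $\S^1_{xz}$ while producing nontrivial motion. The third claim follows by exhaustion: any RE not produced above necessarily has $V_0=\R^4$ and is therefore 3-dimensional.

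The main obstacle is verifying rigorously the classification of $A_{\a,\b}$-invariant linear subspaces of $\R^4$ — specifically, that when $\a\b\ne 0$ the only invariant real subspaces are $\{0\}$, $\R^2_{xy}$, $\R^2_{zw}$, and $\R^4$, so that no invariant 3-subspace exists — together with handling the degenerate subcases in which the configuration might lie inside $\S^1_{xy}\cup\S^1_{zw}$; such configurations correspond to special rather than ordinary central configurations and are therefore excluded by hypothesis.
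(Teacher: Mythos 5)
Your argument is correct and is essentially the adaptation of the paper's proof of Proposition \ref{pro:h3re} that the paper itself invokes when it omits this proof: normalize via Theorem \ref{thm:s2cc} and the $\tau$-symmetry to a configuration on $\S^1_{xz}$ or $\S^2_{xyz}$ with $\lambda<0$, then determine which $A_{\alpha,\beta}(t)$ preserve the relevant geodesic or great sphere (your minimal-invariant-subspace formulation is the same computation). The one obstacle you flag --- possible extra invariant subspaces when $|\alpha|=|\beta|$ --- never arises, since $\lambda=-(\alpha^2-\beta^2)/2\ne 0$ for an ordinary central configuration forces $\alpha^2\ne\beta^2$, so the classification you need is immediate.
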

		We omit the proof   since it is similar to that of Proposition \ref{pro:h3re}. Note that the ordinary central configurations on $\S^2_{xyz}$ with multiplier $\lambda>0$ lead to 3-dimensional relative  equilibria.  Similar to the hyperbolic case, the velocity of  any relative  equilibrium associated a  geodesic configuration is orthogonal to the geodesic containing the configuration.

		We now turn to the special central configurations. 	 In this case, the velocity of   relative equilibrium associated a  geodesic configuration  can be along or  not along  the geodesic containing the configuration.  There is no need to discuss the dimension of equilibrium solutions. 
		\begin{proposition} 	\label{pro:s3ec}
			For the curved $n$-body problem in $\S^3$, consider the  relative equilibria associated with special central configurations. 
			\begin{itemize}
				\item Any geodesic   relative equilibrium  must be  $A_{\a, 0}(t)\q$, or $\tau A_{\a, 0}(t)\q,  \a\ne \R$  for $\q$ being a special central configuration   on $\S^1_{xy}$.  
				\item  There is no 2-dimensional  relative equilibrium. 
				\item Any  other  relative equilibrium  is 3-dimensional. 
			\end{itemize}
		\end{proposition}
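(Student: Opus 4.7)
The plan is to combine Proposition \ref{pro:re_cc} with an inventory of the invariant great circles and great $2$-spheres of the $1$-parameter subgroups $A_{\a,\b}(t)$ of $SO(4)$, in the same spirit as the proof of Proposition \ref{pro:h3re}. By Proposition \ref{pro:re_cc}, a special central configuration $\q$ produces relative equilibria $A_{\a,\b}(t)\q$ with $\a,\b\in\R$ when every particle of $\q$ lies on $\S^1_{xy}\cup\S^1_{zw}$, and $A_{\a,\pm\a}(t)\q$ with $\a\in\R$ otherwise. The invariants I would use are: $A_{\a,0}$ with $\a\ne 0$ preserves only the great circles $\S^1_{xy},\S^1_{zw}$ and the great $2$-spheres $\S^3\cap\{cz+dw=0\}$; the Hopf flow $A_{\a,\a}$ and its mirror $A_{\a,-\a}$ with $\a\ne 0$ preserve exactly the fibers of the Hopf respectively anti-Hopf fibration and no great $2$-sphere; and $A_{\a,\b}$ with $\a\b\ne 0,\ \a\ne\pm\b$ preserves only $\S^1_{xy},\S^1_{zw}$ as great circles and no great $2$-sphere.

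For the geodesic bullet I would argue by cases. If $\q\subset\S^1_{xy}$, the $(z,w)$-block of $A_{\a,\b}$ acts trivially on $\q$, so $A_{\a,\b}(t)\q=A_{\a,0}(t)\q$ is a rotation of $\S^1_{xy}$; if $\q\subset\S^1_{zw}$ the motion reads $A_{0,\b}(t)\q=\tau A_{\b,0}(t)\tau\q$ with $\tau\q\subset\S^1_{xy}$, accounting for the $\tau$ form. If $\q$ has particles on both $\S^1_{xy}$ and $\S^1_{zw}$, the $(z,w)$-components of $\nabla_{\q_i}U_1$ at any $\q_i\in\S^1_{xy}$ come only from the particles on $\S^1_{zw}$ and yield the balance identity $\sum_{j\in\S^1_{zw}}m_j(z_j,w_j)=0$; together with the exclusion of antipodal singular pairs this forces at least three particles on each circle, preventing $\q$ from sitting on any single great circle. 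In the second family, a geodesic motion requires $\q$ to lie on a single invariant great circle of $A_{\a,\pm\a}$, namely a Hopf or anti-Hopf fiber; since every such fiber is $SO(4)$-congruent to $\S^1_{xy}$ and the restriction of $A_{\a,\a}$ to a fiber coincides with the restriction of $A_{\a,0}$ to $\S^1_{xy}$, the motion is conjugate to $A_{\a,0}(t)\q'$ with $\q'\subset\S^1_{xy}$, yielding the second stated form.

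To rule out $2$-dimensional relative equilibria I would recycle the same balance identities: in the mixed case they force the $\S^1_{xy}$-particles to span the $(x,y)$-plane and the $\S^1_{zw}$-particles to span the $(z,w)$-plane, so the orbit of $\q$ under any nontrivial $A_{\a,\b}$ has linear span all of $\R^4$ and lies in no great $2$-sphere; and in the second family $A_{\a,\pm\a}$ admits no invariant great $2$-sphere at all by the catalogue above. After the geodesic and $2$-dimensional cases are exhausted, every remaining relative equilibrium is $3$-dimensional by definition. The main obstacle I anticipate is the mixed-configuration subcase: extracting the full set of balance identities from the Cartesian gradient formulas and then using them to rule out low-rank arrangements of the $\S^1_{zw}$-particles beyond the singular antipodal pair requires a short but careful calculation that is not directly covered by the earlier propositions.
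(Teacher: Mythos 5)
Your proposal is correct and follows the same skeleton as the paper's proof --- Proposition \ref{pro:re_cc} combined with an inventory of which great circles and great $2$-spheres the subgroups $A_{\a,\b}(t)$ preserve --- but it diverges in two substantive ways, both to your credit. First, for a geodesic special central configuration lying on a great circle other than $\S^1_{xy}$ and $\S^1_{zw}$, the paper simply asserts that $A_{\a,\pm\a}(t)\q$ fails to be geodesic for $\a\ne 0$; your catalogue correctly records that $A_{\a,\a}$ (resp. $A_{\a,-\a}$) preserves every Hopf (resp. anti-Hopf) fiber, so a special central configuration sitting on such a fiber does yield a geodesic relative equilibrium, which you then conjugate back to the circle $\S^1_{xy}$. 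This closes a genuine gap in the paper's argument; note only that what the conjugation produces is $hA_{\a,0}(t)\q'$ for some $h\in SO(4)$ with $\q'\subset\S^1_{xy}$ --- the first form up to isometry, not the $\tau$-form as you say --- so the first bullet should carry a prefactor $g$ exactly as its siblings, Propositions \ref{pro:h3re} and \ref{pro:s3rec}, do. Second, for the $2$-dimensional bullet the paper excludes $2$-dimensional special central configurations contained in $\S^1_{xy}\cup\S^1_{zw}$ by a collinearity-plus-hemisphere argument that leans on \cite{Dia13-1} and \cite{Zhu20-2}, whereas you use the balance identity $\sum_{j}m_j(z_j,w_j)=0$ (valid because every cross distance between the two circles equals $\pi/2$, so $\sin^3 d_{ij}=1$) to show that a mixed configuration needs at least three non-singular particles on each circle and hence spans $\R^4$; this is more elementary and self-contained, and it also supplies the justification for the mixed subcase of the geodesic bullet, which the paper's case split passes over (a great circle distinct from $\S^1_{xy}$ meets it in at most two antipodal points, so it cannot carry three such particles). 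The ``careful calculation'' you anticipate is exactly the verification of these balance identities, and it goes through as you describe.
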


		\begin{proof}
			The symmetry group for special central configurations  is $O(4)$. 
			Any geodesic special central configuration is in the form of $g\q$, where $\q$ is on $\S^1_{xy}$ and $g\in O(4)$.  If   $g\q$ is  on $\S^1_{xy}$  (resp. $\S^1_{zw}$), 
			by Proposition \ref{pro:re_cc}, the associated  relative equilibria  are $A_{\a, \b}(t)\q$ for any $\a, \b\in \R$, which are  the same as $A_{\a, 0}(t)\q$ (resp.  $\tau A_{\a, 0}(t)\q$). 
			If   $g\q$ is not  on $\S^1_{xy}$ nor on $\S^1_{zw}$, by Proposition \ref{pro:re_cc}, the associated  relative equilibria  are $A_{\a, \pm \a}(t)\q$, $\a\in \R$. The  relative equilibrium is geodesic only if  $\a=0$, and it is not geodesic nor 2-dimensional  otherwise.

			Let $\q$ be a 2-dimensional special central configuration. We claim that $\q$  can not be within $\S^1_{xy}\bigcup \S^1_{zw}$. Note that the particles on one of the two circles must be collinear since the configuration  is contained in a 3-dimensional hyperplane. Assume that the particles on $\S^1_{xy}$ is collinear. The number of particles on $\S^1_{xy}$ is one otherwise $\q\in \D_+$. Then all particles of $\q$  is  within one 2-dimensional hemisphere, which is impossible, see Section 12.3  of  \cite{Dia13-1} or \cite{Zhu20-2}. The contradiction proves the claim.  Hence, the associated  relative equilibria must be $A_{\a, \pm \a}(t)\q$,  $\a\in \R$.  The  relative equilibrium is 2-dimensional  only if  $\a=0$. 
		\end{proof}

	Many  researches study the relative  equilibria of the curved $n$-body problem by restricting on a 2-dimensional physical space directly, i.e., on $T((\S^2)^n-\D_+)$ or $T((\H^2)^n-\D_-)$.  It seems more convenient to start with  the configurations, and to use the augmented potentials introduced in Theorem \ref{thm:recc} in some proper coordinates. Moreover, this restriction would make the configurations  counting clumsy. 


		Let us finish the discussion with one concrete example.  	In \cite{DS14}, Diacu and Sergiu consider 3-dimensional   relative equilibria of three-body  in $\S^3$ with the property:  The configuration  is not  geodesic and the three mutual distances are the same.  They show that the three masses must be equal. This result can be  obtained  quickly as follows: 
		
		The associated central configuration   must be  2-dimensional since there are only three bodies and the configuration is not  geodesic. It  can not be a special central configuration since it  is not  geodesic \cite{Zhu20-2}.
		 We may assume that it is on $\S^2_{xyz}$, i.e., $\q_i=(x_i, y_i, z_i)$. By Diacu and Zhu \cite{DZ20}, a three-body configuration on $\S^2_{xyz}$ is an ordinary central configuration  if and only if 
		\[  \sum m_i z_i x_i=\sum m_i z_i y_i =0, \ {\rm  } \    (z_1, z_2, z_3)=k(\sin^3 d_{23}, \sin^3 d_{13}, \sin^3 d_{12}).  \] 
		 Since the three mutual distances are the same,  we get immediately that the three masses are the same.

\section*{acknowledgment}
 I would like to thank Xiang Yu  and Ernesto P\'{e}rez-Chavela 
 for careful reading of the manuscript and helpful suggestions.   I  would  like to thank the referee for many helpful suggestions and pointing out a gap in the original manuscript. 
This work is  supported by NSFC(No.11801537)
and China Scholarship Council (CSC NO. 201806345013).


\begin{thebibliography}{99}    
\bibitem{AK12} A. Albouy, V. Kaloshin,   Finiteness of central configurations of five bodies in the plane, Annals of Mathematics 176 (2012), no. 1, 535-588.




\bibitem{AKN06}V.I. Arnold, V.V. Kozlov, A.I. Neishtadt,   Mathematical Aspects of Classical and Celestial Mechanics. [Dynamical Systems. III], Translated from the Russian original by E. Khukhro,  3rd edition, Encyclopaedia of Mathematical Sciences, 3. Springer-Verlag, Berlin, 2006.


\bibitem{BMK04} A.V. Borisov, I.S. Mamaev, A.A. Kilin,  Two-body problem on a sphere. reduction, stochasticity, periodic orbits,  Regul. Chaotic Dyn. 9 (2004), no. 3, 265-279.

\bibitem{BGMM18} A.V. Borisov,  L.C.  Garc\'ia-Naranjo,  I.S. Mamaev, J. Montaldi, Reduction and relative  equilibria for the two-body problem on spaces of constant curvature. Celestial Mech. Dynam. Astronom. 130 (2018), no. 6, 1-36. 


\bibitem{Che13} A. Chenciner,  
The angular momentum of an ordinary central configuration,  
Discrete Contin. Dyn. Syst. 33 (2013), no. 3, 1033-1047.


\bibitem{Dia13-1} F.  Diacu,  Relative equilibria of the 3-dimensional curved $n$-body problem,  Memoirs Amer. Math. Soc. 228 (2013), no. 1071.




\bibitem{DPV12} F. Diacu, E. P\'erez-Chavela,  J. Guadalupe Reyes Victoria,  An intrinsic  approach in the curved $n$-pody problem. the negative curvature case, J.\ Differential Equations 252 (2012), no. 8,  4529-4562.






\bibitem{DS14}F. Diacu, P. Sergiu,  All the Lagrangian relative  equilibria of the curved 3-body problem have equal masses, J. Math. Phys. 55 (2014), no. 11. 








\bibitem{DSZ18} F.\ Diacu, C.\ Stoica, S.\ Zhu,   Central configurations of the curved $n$-body problem, J. Nonlinear Sci.,  28 (2018), no. 5, 1999-2046. 

\bibitem{DZ20}   F.\ Diacu, S.\ Zhu,     Almost all 3-body relative  equilibria are inclined,  Discrete Contin. Dyn. Syst. Ser. S. 13 (2020), no. 4, 1131-1143. 




    \bibitem{GMPR16} L.C.  Garc\'ia-Naranjo, J.C. Marrero, E. P\'erez-Chavela, M. Rodr\'iguez-Olmos,  Classification and stability of relative  equilibria for the two-body problem in the hyperbolic space of dimension 2, J. Differential Equations 260 (2016), no. 7, 6375-6404.           


 \bibitem{Kil98} A.A. Kilin,  Libration points in spaces $\mathbf{S}^2$ and $\mathbf{L}^2$, Regul. Chaotic Dyn. 4 (1999), no. 1, 91-103. 


\bibitem{MS17} R. Mart\'inez, C. Sim\'o, Relative equilibria of the restricted three-body problem in curved spaces, 
Celestial Mech.  Dynam. Astronom.  128(2017), no.  2-3, 221-259.





\bibitem{Moe-N} R.\ Moeckel,  Celestial Mechanics--Especially   Central Configurations, unpublished lecture notes.

 \emph{http://www.math.umn.edu/\~{}rmoeckel/notes/CMNotes.pdf}

\bibitem{Mar92} J. Marsden,  Lectures on Mechanics.  London Mathematical Society Lecture Note Series, 174. Cambridge University Press, Cambridge, 1992. 


\bibitem{Mou10} F.R.\ Moulton,   The straight line solutions of $n$ bodies,  Ann. of Math. (2) 12 (1910), no. 1, 1-17. 



\bibitem{Pac87} F.\ Pacella, 
Central configurations of the $n$-body problem via equivariant Morse theory, 
Arch. Rational Mech. Anal. 97 (1987), no. 1, 59-74. 

 \bibitem{Pal73}J.I. Palmore,  Classifying relative  equilibria. I, Bull. AMS 79(1973), 904-908. 


\bibitem{Pal80}J.I. Palmore,  
Relative equilibria of the n-body problem in $E^4$.
J. Differential Equations 38 (1980), no. 2, 278-300.



\bibitem{PS19} E. P\'{e}rez-Chavela, J.M. S\'{a}nchez-Cerritos,   	
Hyperbolic relative  equilibria for the negative curved $n$–body problem, 
Commun. Nonlinear Sci. Numer. Simul. 67 (2019), 460-479. 



\bibitem{Shu70} M. Shub,  Diagonals and relative  equilibria, Appendix to Smale's paper, 
in Manifolds-Amsterdam, Springer Lecture Notes in Math. 197(1970), 199-201. 


\bibitem{Sma70-2} S. Smale,  Topology and mechanics, II. The planar $n$-body problem,  Invent. Math. 11 (1970), 45–64.



\bibitem{Sma70-3} S. Smale,  Problems on the nature of relative  equilibria in celestial mechanics, in Manifolds-Amsterdam, Springer Lecture Notes in Math. 197(1970), 194-198.





\bibitem{Tib14} P.  Tibboel,   Existence of a lower bound for the distance between point masses of relative  equilibria in spaces of constant curvature, J. Math. Anal. Appl. 416 (2014), no. 1, 205-211. 


\bibitem{Win47} A. Wintner,  The Analytical Foundations of Celestial Mechanics, Princeton University Press, Princeton, N.J., 1947.
	
	\bibitem{YZ19}X. Yu, S. Zhu, Regular polygonal equilibrium configurations on $\S^1$ and stability of the associated relative  equilibria,  J. Dynam. Differential Equations (2020),  https://doi.org/10.1007/s10884-020-09848-1
	

\bibitem{ZZ16}S. \ Zhu, S. Zhao,   3-dimensional  central configurations in $\H^3$ and $\S^3$,  J. Math. Phys. 58 (2017), no. 2, 022901. 
 
 \bibitem{Zhu20-2}S. \ Zhu,    Dziobek equilibrium configurations on a sphere,  	arXiv:1705.03987.

\end{thebibliography}
\end{document}